\newcommand{\comment}[1]{}
\numberwithin{equation}{section}
\newtheorem{theorem}{Theorem}[section]
\newtheorem{lemma}{Lemma}[section]
\newtheorem{proposition}{Proposition}[section]
\newtheorem{prop}[theorem]{Proposition}
\numberwithin{equation}{section}
\newtheorem{remark}{Remark}[section]
\theoremstyle{definition}
\newtheorem{definition}{Definition}[section]
\newtheorem{conjecture}{Conjecture}[section]
\DeclareMathOperator{\E}{E} 
\DeclareMathOperator{\cov}{Cov}
\newcommand{\beq}{\begin{eqnarray}}
\newcommand{\eeq}{\end{eqnarray}}
\newcommand{\ben}{\begin{eqnarray*}}
\newcommand{\een}{\end{eqnarray*}}
\title{Contiguity and non-reconstruction results for planted partition models: the dense case}
 \author{
\sc Debapratim Banerjee
 \\ \small Dept. of Statistics\\ University of Pennsylvania\\ dban@wharton.upenn.edu\\}
\begin{document}
 \maketitle
\begin{abstract}
We consider the two block stochastic block model on $n$ nodes with asymptotically equal cluster sizes. The connection probabilities within and between cluster are denoted by $p_n:=\frac{a_n}{n}$ and $q_n:=\frac{b_n}{n}$ respectively. 
Mossel et al.\cite{MNS12} considered the case when $a_n=a$ and $b_n=b$ are fixed. They proved the probability models of the stochastic block model and that of Erd{\"o}s-R{\'e}nyi graph with same average degree are mutually contiguous whenever $(a-b)^2<2(a+b)$ and are asymptotically singular whenever $(a-b)^2>2(a+b)$. Mossel et al. \cite{MNS12} also proved that when $(a-b)^2<2(a+b)$ no algorithm is able to find an estimate of the labeling of the nodes which is positively correlated with the true labeling.
It is natural to ask what happens when $a_n$ and $b_n$ both grow to infinity. We prove that their results extend to the case when $a_n=o(n)$ and $b_n=o(n)$. We also consider the case when $\frac{a_n}{n} \to p \in (0,1)$ and $(a_n-b_n)^2= \Theta(a_n+b_n)$. Observe that in this case $\frac{b_n}{n} \to p$ also. We show that here the models are mutually contiguous if $(a_n-b_n)^2< 2(1-p)(a_n+b_n)$ and they are asymptotically singular if $(a_n-b_n)^2 > 2(1-p)(a_n+b_n)$. Further we also prove it is impossible find an estimate of the labeling of the nodes which is positively correlated with the true labeling whenever $(a_n-b_n)^2< 2(1-p)(a_n+b_n)$. The results of this paper justify the negative part of a conjecture made in Decelle et al.(2011) \cite{DKMZ11} for dense graphs.
\end{abstract} 
\section{Introduction}
In the last few years the stochastic block model has been one of the most active domains of modern research in statistics, computer science and many other related fields. In general a stochastic block model is a network with a hidden community structure where the nodes within the communities are expected to be connected in a different manner than the nodes between the communities. This model arises naturally in many problems of statistics, machine learning and data mining, but its applications further extends to
from population genetics \cite{JSD00} , where genetically similar
sub-populations are used as the clusters, to image processing \cite{SM00}, \cite{SHB99} , where the group of similar images acts as cluster, to the study of social networks ,
where groups of like-minded people act as clusters \cite{NWS02}. 

Recently a huge amount of effort has been dedicated to find out the clusters. Numerous different clustering algorithms have been proposed in literature. One might look at \cite{Jon67},\cite{Dempster77}, \cite{Bui1987}, \cite{Dyer1989}, \cite{R.B.B}, \cite{BC09}, \cite{Condon1999}, \cite{rohe2011}, \cite{McSherry} for some references.

One of the easiest examples of the stochastic block model is the planted partition model where one have only two clusters of more or less equal size. Formally,
\begin{definition}\label{def_planted}
For $n \in \mathbb{N}$, and $p,q \in [0,1]$ let $\mathcal{G}(n,p,q)$ denote the model of random,$\pm$ labelled graphs in which each
vertex $u$ is assigned (independently and uniformly at random) a label $\sigma_{u} \in \{ \pm 1\}$ and each edge between $u$ and $v$ are included independently with probability $p$ if they have the same label and with probability $q$ if they have different labels. 
\end{definition}
The case when $p$ and $q$ are sufficiently close to each other has got significant amount of interest in literature. Decelle et al. \cite{DKMZ11} made a fascinating conjecture in this regard. 
\begin{conjecture}\label{dec_con}
Let $p=\frac{a}{n}$ and $q=\frac{b}{n}$ where $a$ and $b$ are fixed real numbers. Then \\
i) If $(a-b)^2>2(a+b)$ then one can find almost surely a bisection of the vertices which is positively correlated with the original clusters.\\
ii) If $(a-b)^2<2(a+b)$ then the problem is not solveable.\\
iii) Further, there are no consistent estimators of $a$ and $b$ if $(a-b)^2<2(a+b)$ and there are consistent estimators of $a$ and $b$  whenever $(a-b)^2>2(a+b)$. 
\end{conjecture}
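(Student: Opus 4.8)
The three items split along the phase transition, and I would organize the proof that way. Write $d=\tfrac{a+b}{2}$ for the common average degree, let $\mathbb{G}=\mathcal{G}(n,a/n,b/n)$ denote the planted model and $\mathbb{H}=\mathcal{G}(n,d/n,d/n)$ the corresponding Erd\H{o}s--R\'enyi null model, and set $\lambda:=\tfrac{(a-b)^2}{2(a+b)}$, so that the threshold in the conjecture is exactly $\lambda=1$. Items (i)--(iii) then decompose into a subcritical ($\lambda<1$) half and a supercritical ($\lambda>1$) half.

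\medskip
\noindent\emph{Subcritical regime $\lambda<1$: item (ii) and the negative half of (iii).} The first goal is mutual contiguity of $\mathbb{G}$ and $\mathbb{H}$. Let $L_n=d\mathbb{G}/d\mathbb{H}$ be the likelihood ratio. Conditioning on the hidden labels, $\mathbb{E}_{\mathbb{H}}[L_n^2]$ is the average over two independent $\pm1$ label vectors $\sigma,\tau$ of a product of edge factors that collapses to a function of the overlap $\tfrac1n\sum_u\sigma_u\tau_u$; a Laplace-type analysis of this (asymptotically Gaussian) overlap gives $\mathbb{E}_{\mathbb{H}}[L_n^2]\to(1-\lambda)^{-1/2}$ when $\lambda<1$ and $\mathbb{E}_{\mathbb{H}}[L_n^2]\to\infty$ when $\lambda\ge1$. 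Boundedness of the second moment yields $\mathbb{G}\triangleleft\mathbb{H}$. For the reverse direction I would run small--subgraph conditioning: the short-cycle counts $(C_k)_{k\ge3}$ are asymptotically independent Poissons under both models, with $k$-cycle means in ratio $1+\bigl(\tfrac{a-b}{a+b}\bigr)^k$, and since $\sum_k\tfrac1{2k}\bigl(\tfrac{a-b}{a+b}\bigr)^{2k}d^k=\sum_k\tfrac{\lambda^k}{2k}$ exactly accounts for the limit $(1-\lambda)^{-1/2}=\exp\!\bigl(\sum_k\tfrac{\lambda^k}{2k}\bigr)$, the second moment of $L_n$ conditioned on $(C_k)_{k\le K}$ tends to $1$; Janson's theorem then gives $\mathbb{H}\triangleleft\mathbb{G}$, hence mutual contiguity. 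Item (ii) I would reduce to non-reconstruction on the limiting Galton--Watson tree with a symmetric binary broadcast channel: for that channel the Kesten--Stigum bound is tight (Evans--Kenyon--Peres--Schulman), so below $\lambda=1$ the posterior law of the root spin given the spins at depth $r$ converges to uniform; coupling a depth-$r$ neighbourhood of a vertex of $\mathbb{G}$ with this tree, together with a quantitative decay-of-correlations estimate, then forces $\mathbb{E}[\sigma_u\sigma_v\mid\mathbb{G}]\to0$ for typical pairs $u,v$, so no estimator can be positively correlated with $\sigma$. Finally, the negative half of (iii) is immediate: two subcritical pairs with the same $a+b$ are each contiguous to the same $\mathbb{H}$, hence mutually contiguous, so no test and in particular no consistent estimator can separate them --- only $a+b$ is identifiable.

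\medskip
\noindent\emph{Supercritical regime $\lambda>1$: item (i) and the positive half of (iii).} Here the second moment of $L_n$ diverges, so I would instead exhibit an estimator. One route is spectral: form the non-backtracking (Hashimoto) matrix $B$ of $\mathbb{G}$; one shows its spectral radius is $(1+o(1))d$, that a single real eigenvalue sits near $\tfrac{a-b}{2}$ and detaches from the bulk disc of radius $\sqrt d$ precisely when $(\tfrac{a-b}{2})^2>d$, i.e.\ $\lambda>1$, and that the sign pattern of a vertex-projection of the associated eigenvector is correlated with $\sigma$. A more combinatorial route compares, for each vertex $u$, the numbers of non-backtracking walks of length $\ell\asymp\log n$ from $u$ to the two sides of a reference bipartition, a statistic correlated with $\sigma_u$ above threshold. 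Either way one obtains $\hat\sigma$ with $\liminf_n\mathbb{E}\bigl|\tfrac1n\sum_u\hat\sigma_u\sigma_u\bigr|>0$, which is item (i). For the positive half of (iii), the same non-backtracking analysis gives that the second-largest eigenvalue of $B$ converges to $\tfrac{a-b}{2}$ while $\tfrac{2|E|}{n}\to d$, so these two are consistent estimators of $a+b$ and $|a-b|$ and hence of the unordered pair $\{a,b\}$; alternatively one can use counts of cycles of length $\Theta(\log n)$ with the null contribution subtracted off via the edge count.

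\medskip
\noindent\emph{Main obstacle.} The genuinely hard step is the supercritical detection in item (i) down to the exact threshold. One cannot argue by a likelihood-ratio/second-moment comparison --- that is precisely the quantity that blows up at $\lambda=1$ --- and one is instead forced to control a strongly non-normal random matrix (the non-backtracking operator) or to denoise an exponentially large family of heavily dependent walk-counts. Establishing that the informative eigenvalue sits at $\tfrac{a-b}{2}$ while the rest of the spectrum stays inside radius $\sqrt d$ --- a ``weak Ramanujan'' statement --- requires delicate path counting, a trace-moment or Ihara--Bass reduction, and careful handling of short cycles and ``tangles'' in the local structure. By comparison, the delicate point on the subcritical side is only the passage from local (tree) non-reconstruction to global non-detection, which needs the quantitative correlation-decay estimate to survive the local weak limit.
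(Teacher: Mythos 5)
You should first note that the paper does not prove this statement at all: Conjecture \ref{dec_con} is quoted from Decelle et al.\ \cite{DKMZ11}, and the paper only records that part (i) was settled by Mossel et al.\ \cite{MNS13} and Massouli{\'e} \cite{Mas14} (after partial progress by Coja-Oghlan \cite{Coja}), while parts (ii) and (iii) were settled by Mossel et al.\ \cite{MNS12}. So there is no internal proof to compare against; the relevant comparison is with those cited works and with the dense-case machinery this paper develops. Measured against that, your sketch is essentially a faithful roadmap of the known sparse-case resolution: second-moment plus small-subgraph conditioning on short cycle counts with Janson's theorem for contiguity, Galton--Watson coupling plus tightness of the Kesten--Stigum bound (Evans--Kenyon--Peres--Schulman) for non-reconstruction, and the non-backtracking spectral method for detection above threshold. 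Two caveats. First, a quantitative slip: since simple graphs have no cycles of length $1$ or $2$, the cycle sum runs over $k\ge 3$, so the second-moment limit is $\exp\bigl\{-\tfrac{\lambda}{2}-\tfrac{\lambda^2}{4}\bigr\}(1-\lambda)^{-1/2}$ (cf.\ Lemma \ref{lem:secondmoment} here, and Lemma 5.4--5.5 of \cite{MNS12}), not $(1-\lambda)^{-1/2}$; your bookkeeping identity matching the conditioned second moment to $1$ only closes once this correction is included. Second, the genuinely hard steps --- the weak-Ramanujan control of the non-backtracking spectrum down to $\lambda=1$ and the passage from tree non-reconstruction to graph non-detection --- are named but deferred to deep external results, which is acceptable for a sketch but is where all the work lies. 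It is also worth observing that your route is specific to the sparse regime: the present paper's point is precisely that for $a_n,b_n\to\infty$ the Poisson cycle counts and the tree coupling both break down, and it substitutes signed cycles with Gaussian limits, a Gaussian analogue of Janson's theorem (Proposition \ref{prop:norcont}), and a conditional total-variation argument (Proposition \ref{prop:tv}) in their place.
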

Coja-Oghlan \cite{Coja} solved part $i)$ of the problem when $(a-b)^2>C(a+b)$ for some large $C$ and finally
part $ii)$ and $iii)$ of Conjecture \ref{dec_con} was proved by Mossel et al. \cite{MNS12} and part $i)$ was solved by Mossel et al. \cite{MNS13} and  Massouli{\'e} \cite{Mas14} independently.  

Typically the problem is much more delicate when more than two communities are present in the sparse case. To keep things simple let us consider the general stochastic block model with $k$ asymptotically equal sized blocks with connection probabilities within and between blocks are given by $\frac{a}{n}$ and $\frac{b}{n}$ respectively. It was conjectured in Mossel et al \cite{MNS12} that for $k$ sufficiently large, there is a constant $c(k)$ such that whenever 
\[
c(k)< \frac{(a-b)^2}{a+(k-1)b} <k
\]
the reconstruction problem is solvable in exponential time, it is not solvable if $\frac{(a-b)^2}{a+(k-1)b}< c(k)$ and solvable in polynomial time if $k<\frac{(a-b)^2}{a+(k-1)b}$. The upper bound is known as  Kesten-Stigum threshold. Bordenave et al. \citep{Bor1} solved the reconstruction problem above a deterministic threshold by spectral analysis of non-backtraking matrix. One might look at Banks et al. \cite{bank16} for the non solvability part. They prove that the probability models of stochastic block model and that of Erd{\"o}s-R{\'e}nyi graph with same average degree are contiguous and the reconstruction problem is unsolvable if 
\[
d< \frac{2\log(k-1)}{k-1}\frac{1}{\lambda^2}.
\]
 Here $d= \frac{a+(k-1)b}{k}$ and $\lambda= \frac{a-b}{kd}$.
Abbe et al. \cite{AS16} provides an efficient algorithm for reconstruction above the Kesten-Stigum threshold.  Abbe et al. \cite{AS16} and Banks et al. \cite{bank16} also provide cases strictly below the  Kesten-Stigum threshold where the problem is solvable in exponential time.
  
 On the other hand, a different type of reconstruction problem was considered in  Mossel et al. \cite{MNS15} for denser graphs. They considered two different notions of recovery. The first one is weak consistency where one is interested in finding a bisection $\hat{\sigma}$ such that $\sigma$ and $\hat{\sigma}$ have correlation going to $1$ with high probability. The second one is called strong consistency. Here one is interested in finding a bisection $\hat{\sigma}$ such that $\hat{\sigma}$ is either $\sigma$ or $-\sigma$ with probability tending to $1$. Mossel et al. \cite{MNS15}  prove that weak recovery is possible if and only if $\frac{n(p_n-q_n)^2}{p_n+q_n} \to \infty$ and strong recovery is possible if and only if 
\[
\left(a_n+b_n-2\sqrt{a_nb_n}-1\right)\log n + \frac{1}{2} \log \log n \to \infty.
\]
Here $a_n= \frac{np_n}{\log n}$ and $b_n= \frac{nq_n}{\log n}$ respectively. Abbe et al. \citep{ABH14} studied the same problem independently in the logarithmic sparsity
regime. They prove that for $a=\frac{np_n}{\log n}$ and $b= \frac{nq_n}{\log n}$ fixed,  $(a + b)-2\sqrt{ab} > 1$ is sufficient for strong consistency and that $(a + b)- 2\sqrt{ab} \ge 1$
is necessary. We note that their results are implied by Mossel et al.\cite{MNS15}.

However, according to the best of our knowledge questions similar to part $ii)$ and $iii)$ of Conjecture \ref{dec_con} have not yet been addressed in dense case (i.e. when $a$ and $b$ increase to infinity) which is the main focus of this paper. 

Before stating our results we mention that the results in Mossel et al. \cite{MNS12} is more general than part $iii)$ of Conjecture \ref{dec_con}. Let $\mathbb{P}_n$ and $\mathbb{P}_n'$ be the sequence of probability measures induced by $\mathcal{G}(n,p,q)$ and $\mathcal{G}(n,\frac{p+q}{2},\frac{p+q}{2})$ respectively. Then \cite{MNS12} prove that whenever $a$ and $b$ are fixed numbers and $(a-b)^2<2(a+b)$, the measures $\mathbb{P}_n$ and $\mathbb{P}_n'$ are mutually contiguous i.e. for a sequence of events $A_n$, $\mathbb{P}_n(A_n) \to 0$ if and only if $\mathbb{P}_n'(A_n) \to 0$. Now part $iii)$ of Conjecture \ref{dec_con} directly follows from the contiguity. The proof in Mossel et al. \cite{MNS12} is based on calculating the limiting distribution of the short cycles and using a result of contiguity (Theorem 1 in Janson \cite{Jan} and Theorem 4.1 in Wormald \cite{Wormald}). However, one should note that the result from \cite{MNS12} doesn't directly generalize to the denser case. Since, one requires the limiting distributions of short cycles to be independent Poisson in order to use Janson's result. In our proof instead of considering the short cycles we consider the ``signed cycles"(to be defined later) which have asymptotic normal distributions. We also find a result analogous to Janson for the normal random variables in order to complete the proof.

On the other hand the original proof of non-reconstruction from Mossel et al. \cite{MNS12} relies on the coupling of $\mathbb{P}_n$ and $\mathbb{P}_n'$ with probability measure induced by Galton Watson trees of suitable parameters. However, it is well known that when the graph is sufficiently dense i.e. $a_n >> n^{o(1)}$ the coupling argument doesn't work. So our proof is based on fine analysis of some conditional probabilities. Technically, this proof is closely related to the non-reconstruction proof in section 6.2 of Banks et al. \cite{bank16} rather than the original proof given in Mossel et al. \cite{MNS12}. 

The paper is organized in the following manner. In Section \ref{sec:results} we build some preliminary notations and state our results. Section \ref{sec:contiguity} is dedicated for building a result analogous to Theorem 1 in Janson \cite{Jan}. In Section \ref{section:4} we define signed cycles and find their asymptotic distributions. Section \ref{sec:5} is dedicated to complete the proofs of our contiguity results. In Section \ref{sec:6} we prove the non-reconstruction result. Finally,  the paper concludes with an Appendix containing a proof of a result from random matrix theory used in this paper. 
\section{Our results}\label{sec:results}
Through out the paper a random graph will be denoted by $G$ and  $x_{i,j}$ will be used to denote the indicator random variable corresponding to an edge between the nodes $i$ and $j$. Further $\mathbb{P}_n$ and $\mathbb{P}_n'$ will be used to denote the sequence of probability measures induced by $\mathcal{G}(n,p_n,q_n)$ and $\mathcal{G}(n,\frac{p_n+q_n}{2},\frac{p_n+q_n}{2})$ respectively. For notational simplicity we denote $\frac{p_n+q_n}{2}$ by $\hat{p}_n$. 

Further, for any two labeling of the nodes $\sigma$ and $\tau$, we define their overlap to be 
\begin{equation}\label{overlap}
\mathrm{ov}(\sigma,\tau):= \frac{1}{n}\left(\sum_{i=1}^{n} \sigma_i\tau_i -\frac{1}{n} \left( \sum_{i=1}^{n}\sigma_i\right)\left(\sum_{i=1}^{n}\tau_i\right) \right).
\end{equation}
\noindent
We now state our results.
\begin{theorem}\label{thm:cont1} 
i)If $a_n,b_n\to \infty$, $a_n=o(n)$ and $(a_n-b_n)^2<2(a_n+b_n)$, then the probability measures $\mathbb{P}_n$ and $\mathbb{P}_n'$ are mutually contiguous. As a consequence, for any sequence of events $A_n$,  $\mathbb{P}_n(A_n) \to 0$ if and only if $\mathbb{P}_{n}'(A_n) \to 0.$ So there doesn't exists an estimator $(A_n,B_n)$ for $(a_n,b_n)$ such that $|A_n-a_n|+|B_n-b_n|=o_p(a_n-b_n)$.
\\
ii)If $a_n,b_n\to \infty$, $a_n=o(n)$ and $(a_n-b_n)^2>2(a_n+b_n)$, then the probability measures $\mathbb{P}_n$ and $\mathbb{P}_n'$ are asymptotically singular. Further there exists an estimator $(A_n,B_n)$ for $(a_n,b_n)$ such that $|A_n-a_n|+|B_n-b_n|=o_p(a_n-b_n)$.
\end{theorem}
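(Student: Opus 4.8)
The strategy I would follow mirrors the classical approach of Robinson–Wormald/Janson for random regular graphs, but with the Poisson statistics of short cycles replaced by Gaussian statistics of ``signed cycles''. The two halves of the theorem — contiguity when $(a_n-b_n)^2<2(a_n+b_n)$ and singularity when $(a_n-b_n)^2>2(a_n+b_n)$ — are handled by a single family of test statistics, namely the signed cycle counts, which under $\mathbb{P}_n$ and $\mathbb{P}_n'$ will be shown (in Section~\ref{section:4}) to converge jointly to independent Gaussians with the same variances but with shifted means. The shift in the mean of the $k$-th signed cycle count turns out to be governed by $\left(\frac{(a_n-b_n)^2}{2(a_n+b_n)}\right)^{k/2}$ up to a normalization, which is exactly why the quantity $(a_n-b_n)^2/(2(a_n+b_n))$ controls the dichotomy.

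First I would record the relevant normalizations: with $\hat p_n=\frac{p_n+q_n}{2}$ and $\tau_n:=\frac{p_n-q_n}{p_n+q_n}$ (so that $\tau_n^2 \asymp \frac{(a_n-b_n)^2}{2(a_n+b_n)}\cdot\frac{1}{\,\cdot\,}$ in the appropriate sense), define for each fixed $k\ge 3$ the signed cycle statistic
\begin{equation}\label{plan:signedcycle}
C_{k}(G):=\sum_{\substack{i_1,\dots,i_k \text{ distinct}}} \prod_{\ell=1}^{k}\frac{x_{i_\ell,i_{\ell+1}}-\hat p_n}{\sqrt{\hat p_n(1-\hat p_n)}},
\end{equation}
with indices read cyclically. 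In Section~\ref{section:4} one shows that $(C_{k})_{k=3}^{K}$ is asymptotically a vector of independent centered Gaussians under $\mathbb{P}_n'$ (normalized by $\sqrt{2k}$), and under $\mathbb{P}_n$ the same, except that $C_k$ acquires an asymptotic mean $\mu_k$ proportional to $\left(\lambda_n\right)^{k}$ for a parameter $\lambda_n$ with $\lambda_n^2 \to \frac{(a_n-b_n)^2}{2(a_n+b_n)}$ (or rather the appropriate dense-case analogue $\frac{(a_n-b_n)^2}{2(1-\hat p_n)(a_n+b_n)}$ is the relevant quantity, but in the regime $a_n=o(n)$ one has $\hat p_n\to 0$ and this reduces to the stated condition). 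For part (i), I would then invoke the normal analogue of Janson's contiguity criterion proved in Section~\ref{sec:contiguity}: if $\sum_k \mu_k^2/(2k) < \infty$ — which is precisely the condition $\lambda_n^2<1$, i.e.\ $(a_n-b_n)^2<2(a_n+b_n)$, since $\sum_k \lambda^{2k}/(2k) = -\tfrac12\log(1-\lambda^2)<\infty$ iff $\lambda^2<1$ — then the likelihood ratio $d\mathbb{P}_n/d\mathbb{P}_n'$ is asymptotically lognormal with the right parameters, hence uniformly integrable, giving mutual contiguity. For part (ii), when $\lambda_n^2>1$ the series of squared means diverges, and I would use a truncated linear combination $\sum_{k=3}^{K} \mu_k C_k/(2k)$ as an explicit statistic whose law under $\mathbb{P}_n$ and $\mathbb{P}_n'$ separates (its mean under $\mathbb{P}_n$ exceeds its standard deviation by a factor growing with $K$), producing a sequence of events witnessing asymptotic singularity; the estimator claim then follows by using the (consistent) empirical version of $\lambda_n^2$ built from these signed cycle counts together with the total edge count to solve for $(a_n,b_n)$.

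The last sentence of part (i) — nonexistence of an estimator $(A_n,B_n)$ with $|A_n-a_n|+|B_n-b_n|=o_p(a_n-b_n)$ — and the corresponding existence claim in part (ii) are then soft consequences. For (i): if such an estimator existed under $\mathbb{P}_n$, one could test $\mathcal{G}(n,p_n,q_n)$ against $\mathcal{G}(n,\hat p_n,\hat p_n)$ with vanishing error (the latter being the ``$a_n=b_n$'' model, where $a_n-b_n=0$), contradicting contiguity; I would phrase this carefully since $a_n,b_n$ are parameters and one must compare along sequences, but contiguity of $\mathbb{P}_n$ and $\mathbb{P}_n'$ gives exactly that no event, hence no test, can asymptotically distinguish them, and plugging in the estimator-based test yields the contradiction. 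For (ii): take $A_n,B_n$ from inverting the signed-cycle estimator of $\lambda_n^2$ and the edge density, and check the rate.

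The main obstacle I anticipate is twofold. First, establishing the joint asymptotic normality of the signed cycle counts in the \emph{dense} regime $a_n\to\infty$, $a_n=o(n)$, with explicit control of the mean shift: this is a combinatorial moment computation where one must show that self-intersecting closed walks contribute negligibly and that the dominant contribution comes from genuine $k$-cycles, uniformly as $n\to\infty$ and for the range of $\hat p_n$ allowed — this is what Section~\ref{section:4} must deliver, and it is technically the heart of the matter. Second, proving the normal analogue of Janson's lemma in Section~\ref{sec:contiguity} with enough uniformity to allow the variances $\hat p_n(1-\hat p_n)$ and the mean parameters $\lambda_n$ to vary with $n$: one needs that a sequence of statistics converging jointly to independent Gaussians, whose log-likelihood ratio converges to a Gaussian with matching mean-variance relation, forces uniform integrability of the likelihood ratio, and the standard proof (second-moment bound on $d\mathbb{P}_n/d\mathbb{P}_n'$) must be checked to go through when the ``infinitely many Gaussians'' are approached through a $K\to\infty$ after $n\to\infty$ double limit. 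Everything else — the explicit computation of $\sum_k \lambda^{2k}/(2k)$, the reduction of the dense-case threshold constant to $2(a_n+b_n)$ when $\hat p_n\to 0$, and the soft estimator arguments — is routine once these two pillars are in place.
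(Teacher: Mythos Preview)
Your proposal is correct and follows essentially the same route as the paper: signed cycle statistics with joint Gaussian limits under both measures, the normal analogue of Janson's criterion (whose key hypothesis is the \emph{direct} second-moment computation $\E_{\mathbb{P}_n'}[(d\mathbb{P}_n/d\mathbb{P}_n')^2]\to e^{-t/2-t^2/4}(1-t)^{-1/2}$ carried out separately in Lemma~\ref{lem:secondmoment}, not via the signed cycles), and the soft estimator arguments. The only minor tactical difference is in part (ii), where the paper uses a single signed cycle $C_{n,k_n}$ with $k_n\to\infty$ slowly (so that $\mu_{k_n}/\sqrt{2k_n}\to\infty$) rather than your truncated linear combination, and then reads off the estimator of $\sqrt{c/2}$ directly as $(\sqrt{2k_n}\,C_{n,k_n})^{1/k_n}$.
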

\begin{theorem}\label{thm:cont2}
Suppose $\frac{a_n}{n} \to p \in (0,1)$  and let $c:= \frac{(a_n-b_n)^2}{(a_n+b_n)} \in (0,\infty)$, then the following are true:
\\
i) $\mathbb{P}_n$ and $\mathbb{P}_n'$ are mutually contiguous whenever ${\frac{c}{2(1-p)}}<1.$ So there doesn't exists an estimator $(A_n,B_n)$ for $(a_n,b_n)$ such that $|A_n-a_n|+|B_n-b_n|=o_p(a_n-b_n)$.
\\
ii) $\mathbb{P}_n$ and $\mathbb{P}_n'$  are asymptotically singular whenever ${\frac{c}{2(1-p)}}>1$. Further there exists an estimator $(A_n,B_n)$ for $(a_n,b_n)$ such that $|A_n-a_n|+|B_n-b_n|=o_p(a_n-b_n)$. 
\end{theorem}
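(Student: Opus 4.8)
The plan is to reduce both parts of Theorem~\ref{thm:cont2} to the asymptotics of the \emph{signed cycles} of Section~\ref{section:4} and the normal analogue of Janson's contiguity criterion from Section~\ref{sec:contiguity}. Write $\bar x_{i,j}:=(x_{i,j}-\hat p_n)/\sqrt{\hat p_n(1-\hat p_n)}$ for the standardized edge variables and, for $k\ge 3$, let $C_k^{(n)}$ be the signed $k$-cycle count $\sum_{C}\prod_{e\in C}\bar x_e$ (sum over all $k$-cycles of the complete graph on $[n]$), normalized so that $\var_{\mathbb{P}_n'}(C_k^{(n)})\to 1$. Under $\mathbb{P}_n'$ the $\bar x_{i,j}$ are i.i.d.\ with mean $0$ and variance $1$; under $\mathbb{P}_n$ they have variance $1$ but mean $\tau_n\sigma_i\sigma_j$ with $\tau_n:=(p_n-q_n)/\bigl(2\sqrt{\hat p_n(1-\hat p_n)}\bigr)$, and since every vertex of a cycle has degree $2$ this gives $\E_{\mathbb{P}_n}\bigl[\prod_{e\in C}\bar x_e\bigr]=\tau_n^{k}$; as there are $(1+o(1))n^{k}/(2k)$ $k$-cycles, $\E_{\mathbb{P}_n}C_k^{(n)}=(\sqrt n\,\tau_n)^{k}/\sqrt{2k}+o(1)$. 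The arithmetic point peculiar to the dense regime is that $\hat p_n\to p\in(0,1)$ makes the factor $1-\hat p_n$ non-negligible: $\hat p_n(1-\hat p_n)\to p(1-p)$ and $a_n+b_n=(2p+o(1))n$, so
\[
\sqrt n\,\tau_n\longrightarrow\pm\lambda,\qquad \lambda:=\sqrt{\frac{c}{2(1-p)}}=\sqrt{\frac{(a_n-b_n)^2}{2(1-p)(a_n+b_n)}},
\]
with sign equal to that of $a_n-b_n$, and hence $\E_{\mathbb{P}_n}C_k^{(n)}\to\mu_k:=(\pm\lambda)^{k}/\sqrt{2k}$. This is precisely the mechanism that replaces the Mossel--Neeman--Sly threshold $c<2$ by $c<2(1-p)$.

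With this in hand I would invoke Section~\ref{section:4}: jointly over $k\ge 3$, $(C_k^{(n)})_k\Rightarrow(Z_k)_k$ i.i.d.\ $N(0,1)$ under $\mathbb{P}_n'$ and $(C_k^{(n)})_k\Rightarrow(Z_k+\mu_k)_k$ under $\mathbb{P}_n$, with the uniform second-moment control needed for the criterion of Section~\ref{sec:contiguity}. For part~(i), $\lambda^{2}=c/(2(1-p))<1$ gives $\sum_{k\ge 3}\mu_k^{2}=\sum_{k\ge 3}\lambda^{2k}/(2k)<\infty$, so the Janson analogue yields that $\mathbb{P}_n$ and $\mathbb{P}_n'$ are mutually contiguous. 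The non-existence of an estimator $(A_n,B_n)$ with $|A_n-a_n|+|B_n-b_n|=o_p(a_n-b_n)$ is then the usual consequence: since $|a_n-b_n|=\sqrt{c(a_n+b_n)}\to\infty$, under $\mathbb{P}_n$ such an estimator would force $|A_n-B_n|\ge\tfrac12|a_n-b_n|\to\infty$ with probability $\to1$, whereas on data from $\mathcal G(n,\hat p_n,\hat p_n)$ --- the planted model with both parameters equal to $\tfrac12(a_n+b_n)$ --- consistency would force $A_n=B_n$ with probability $\to1$ under $\mathbb{P}_n'$, contradicting contiguity.

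For part~(ii), $\lambda^{2}>1$ makes $|\mu_k|=\lambda^{k}/\sqrt{2k}\to\infty$ and $\sum_k\mu_k^{2}=\infty$, and the singularity half of the Section~\ref{sec:contiguity} criterion applies (one concrete route: choose $k_n\to\infty$ slowly enough that the joint CLT of Section~\ref{section:4} still governs $C_{k_n}^{(n)}$, so that the event $\{|C_{k_n}^{(n)}|>\tfrac12|\mu_{k_n}|\}$ has $\mathbb{P}_n$-probability $\to1$ and $\mathbb{P}_n'$-probability $\to0$), whence $\mathbb{P}_n\perp\mathbb{P}_n'$. Alternatively the random-matrix result proved in the Appendix gives $\lambda_{\max}(\bar A/\sqrt n)\to\max\{2,\lambda+\lambda^{-1}\}$, which exceeds $2$ exactly when $\lambda>1$, and this scalar alone witnesses singularity. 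For the estimator: the edge density $\tilde p_n$ estimates $\hat p_n$ to within $O_p(1/n)$, hence $a_n+b_n$ to within $O_p(1)$; and for an odd $k_n\to\infty$ the identity $C_{k_n}^{(n)}=\mu_{k_n}+O_p(1)$ together with $\mu_{k_n}\to\pm\infty$ makes the $k_n$-th root of the suitably scaled $C_{k_n}^{(n)}$ a consistent estimator of $\lambda\cdot\sgn(a_n-b_n)$; combining the estimates of $a_n+b_n$, of $(a_n-b_n)^{2}=2(1-p)\lambda^{2}(a_n+b_n)$, and of $\sgn(a_n-b_n)$ --- all with errors $o_p(|a_n-b_n|)$, using $O_p(1)=o_p(|a_n-b_n|)$ --- reconstructs $(A_n,B_n)$ with $|A_n-a_n|+|B_n-b_n|=o_p(a_n-b_n)$.

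The real difficulty lies not in this assembly but in its inputs, as the introduction signals: establishing the joint asymptotic normality of the signed cycles with the \emph{exact} mean vector $(\mu_k)$ and moment control uniform in $k$ (Section~\ref{section:4}), and providing either the quantitative refinement valid for a growing cycle length $k_n$ or the BBP-type edge result for $\bar A$ used in part~(ii) (the Appendix). Granting those, Theorem~\ref{thm:cont2} is the bookkeeping above, whose one genuinely new feature is the identification of $\lambda^{2}=c/(2(1-p))$, rather than $c/2$, as the operative signal-to-noise ratio in the dense regime.
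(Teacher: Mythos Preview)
Your plan matches the paper's: signed cycles (Proposition~\ref{prop:signdistr}) feed into the normal Janson analogue (Proposition~\ref{prop:norcont}), and the dense-regime shift from $c/2$ to $t=\lambda^2=c/(2(1-p))$ is exactly the point. A few corrections, however.

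First, the Appendix does \emph{not} prove a BBP-type eigenvalue transition; it proves the word-counting bound (Lemma~\ref{lem:appendix}) used inside the proof of the signed-cycle CLT. So your ``alternative'' via $\lambda_{\max}(\bar A/\sqrt n)$ is not actually available in this paper, and singularity in part~(ii) must go through the growing-$k_n$ route you sketch first --- which is precisely what the paper does (it notes $\mu_{k_n}/\sqrt{2k_n}\to\infty$ and builds the estimator from $(\sqrt{2k_n}\,C_{n,k_n}(G))^{1/k_n}$ together with $\hat d_n$, essentially your construction).

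Second, the ``second-moment control needed for the criterion'' is condition~(iv) of Proposition~\ref{prop:norcont}: the \emph{exact} limit $\E_{\mathbb{P}_n'}\bigl[(d\mathbb{P}_n/d\mathbb{P}_n')^2\bigr]\to e^{-t/2-t^2/4}(1-t)^{-1/2}$. This is not a byproduct of the cycle CLT in Section~\ref{section:4}; it is a separate calculation (Lemma~\ref{lem:secondmoment}), done by expanding the squared likelihood ratio over pairs of labelings $(\sigma,\tau)$ and reducing to a Gaussian-type integral in the overlap $\rho(\sigma,\tau)$. You should list it as a third input alongside the two you name.

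Finally, your non-estimability argument in part~(i) has a small gap: you appeal to consistency of $(A_n,B_n)$ under $\mathbb{P}_n'$ for the degenerate parameters $\bigl(\tfrac12(a_n+b_n),\tfrac12(a_n+b_n)\bigr)$, but only consistency under $\mathbb{P}_n$ for the given $(a_n,b_n)$ is assumed. The paper's version avoids this: from $|B_n-b_n|=o_p(a_n-b_n)$ and $\hat d_n-\tfrac12(a_n+b_n)=o_p(a_n-b_n)$ one gets $2(\hat d_n-B_n)/(a_n-b_n)\to 1$ in $\mathbb{P}_n$-probability, hence by contiguity in $\mathbb{P}_n'$-probability as well; this is impossible because $\mathbb{P}_n'$ depends only on $a_n+b_n$, so the same statistic would have to converge to different limits for different choices of $a_n-b_n$ at fixed sum.
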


\begin{theorem}\label{thm:nonrecon}
i) If $a_n,b_n \to \infty$, $a_n=o(n)$ and $(a_n-b_n)^2<2(a_n+b_n)$, then there is no reconstruction algorithm which performs better than the random guessing i.e. for any estimate of the labeling $\{\hat{\sigma}_{i}\}_{i=1}^{n}$ we have 
\begin{equation}\label{nonreconcond}
\mathrm{ov}(\sigma,\hat{\sigma}) \stackrel{P}{\to} 0.
\end{equation}
ii)Suppose $\frac{a_n}{n} \to p \in (0,1)$ and let $c:= \frac{(a_n-b_n)^2}{(a_n+b_n)} \in (0,\infty)$, then (\ref{nonreconcond}) holds when ${\frac{c}{2(1-p)}}<1.$ As a consequence, no reconstruction algorithm performs better than the random guessing.
\end{theorem}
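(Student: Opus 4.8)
\textbf{Proof proposal for Theorem~\ref{thm:nonrecon}.}
Write $\lambda_n:=\frac{a_n-b_n}{a_n+b_n}$ and $\gamma_n:=\hat{p}_n\lambda_n^2/(1-\hat{p}_n)$, so that $n\gamma_n=\frac{(a_n-b_n)^2}{2(a_n+b_n)(1-\hat{p}_n)}$; in both parts we are, by hypothesis, in the contiguous regime of Theorem~\ref{thm:cont1} or Theorem~\ref{thm:cont2}, and in particular the second moment of the likelihood ratio $f_n:=\mathrm d\mathbb{P}_n/\mathrm d\mathbb{P}_n'$ is bounded (this is what the proofs in Section~\ref{sec:5} establish). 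The plan is to show $\mathbb{E}\big[\mathrm{ov}(\sigma,\hat\sigma)^2\big]\to0$ for every estimator $\hat\sigma=\hat\sigma(G)$, allowing randomization and values in $[-1,1]$; since $L^2$-convergence implies convergence in probability this yields (\ref{nonreconcond}) in both cases at once. First I would discard the bilinear correction term in (\ref{overlap}): as the labels are i.i.d.\ uniform on $\{\pm1\}$, $\mathbb{E}\big[(\sum_i\sigma_i)^2\big]=n$ and $\big|\sum_i\hat\sigma_i\big|\le n$, so that term is $O(n^{-1/2})$ in $L^2$ and it suffices to control $\frac1n\sum_i\sigma_i\hat\sigma_i$. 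Conditioning on $G$ (and on any randomness internal to $\hat\sigma$, which given $G$ is independent of $\sigma$), using $|\hat\sigma_i\hat\sigma_j|\le1$ and vertex-exchangeability,
\[
\mathbb{E}\Big[\big(\tfrac1n{\textstyle\sum_i}\sigma_i\hat\sigma_i\big)^2\Big]=\tfrac1{n^2}{\textstyle\sum_{i,j}}\,\mathbb{E}\big[\hat\sigma_i\hat\sigma_j\,\mathbb{E}[\sigma_i\sigma_j\mid G]\big]\le\tfrac1n+\mathbb{E}\big|\mathbb{E}[\sigma_1\sigma_2\mid G]\big|,
\]
so everything reduces to proving that the averaged two-point function vanishes: $\mathbb{E}\big|\mathbb{E}[\sigma_1\sigma_2\mid G]\big|\to0$.

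Next I would recast this as a total-variation statement. Let $\mathbb{P}_n^{-}$ be the law of $G$ under $\mathcal{G}(n,p_n,q_n)$ conditioned on $\{\sigma_1\neq\sigma_2\}$. Since $\sigma_1,\sigma_2$ are independent uniform signs and the label prior is uniform, a one-line Bayes computation gives $\mathbb{E}[\sigma_1\sigma_2\mid G]=1-\mathrm d\mathbb{P}_n^{-}/\mathrm d\mathbb{P}_n(G)$, whence $\mathbb{E}\big|\mathbb{E}[\sigma_1\sigma_2\mid G]\big|=2\,\|\mathbb{P}_n-\mathbb{P}_n^{-}\|_{\mathrm{TV}}$. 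I would estimate this by a chi-square comparison against the product (Erd\H{o}s--R\'enyi) measure $\mathbb{P}_n'$: with $f_n:=\mathrm d\mathbb{P}_n/\mathrm d\mathbb{P}_n'$ and $h_n:=\mathrm d\mathbb{P}_n^{-}/\mathrm d\mathbb{P}_n'$, Cauchy--Schwarz gives
\[
2\,\|\mathbb{P}_n-\mathbb{P}_n^{-}\|_{\mathrm{TV}}=\mathbb{E}^{\mathbb{P}_n'}|f_n-h_n|\le\big(\mathbb{E}^{\mathbb{P}_n'}f_n^2-2\,\mathbb{E}^{\mathbb{P}_n'}[f_nh_n]+\mathbb{E}^{\mathbb{P}_n'}h_n^2\big)^{1/2},
\]
so it suffices that $\mathbb{E}^{\mathbb{P}_n'}f_n^2$, $\mathbb{E}^{\mathbb{P}_n'}[f_nh_n]$ and $\mathbb{E}^{\mathbb{P}_n'}h_n^2$ converge to a common finite limit; the first is exactly the second moment of the likelihood ratio already treated in Section~\ref{sec:5}.

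The computation of these three moments is the heart of the matter, and I would run it as in Section~\ref{sec:5}. An edge-by-edge calculation, using $r_{ij}(\sigma)=\hat p_n(1+\lambda_n\sigma_i\sigma_j)$, shows that for two independent replicas $\sigma,\sigma'$ drawn from the relevant priors and with $\rho_i:=\sigma_i\sigma_i'$,
\[
\mathbb{E}^{\mathbb{P}_n'}f_n^2=\mathbb{E}_{\sigma,\sigma'}\prod_{i<j}\big(1+\gamma_n\rho_i\rho_j\big),
\]
and $\mathbb{E}^{\mathbb{P}_n'}[f_nh_n]$, $\mathbb{E}^{\mathbb{P}_n'}h_n^2$ equal the same product-functional of $\rho$, with $\sigma$ unconstrained and $\sigma'$ conditioned on $\{\sigma_1'\neq\sigma_2'\}$ for the former, and both replicas so conditioned for the latter. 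The distinguished-vertex conditioning is almost invisible at the level of $\rho$: when both replicas are unconstrained the $\rho_i$ are i.i.d.\ uniform on $\{\pm1\}$; for the mixed term one checks (by conditioning on $\sigma_1'$) that the $\rho_i$ are \emph{again} exactly i.i.d.\ uniform, so in fact $\mathbb{E}^{\mathbb{P}_n'}[f_nh_n]=\mathbb{E}^{\mathbb{P}_n'}f_n^2$ for every $n$; and for $\mathbb{E}^{\mathbb{P}_n'}h_n^2$ the $\rho_i$ are i.i.d.\ uniform subject only to the single identity $\rho_1=\rho_2$. Hence $\mathbb{E}^{\mathbb{P}_n'}(f_n-h_n)^2=\mathbb{E}^{\mathbb{P}_n'}h_n^2-\mathbb{E}^{\mathbb{P}_n'}f_n^2$; expanding $\prod_{i<j}(1+\gamma_n\rho_i\rho_j)=\sum_H\gamma_n^{|H|}\prod_v\rho_v^{d_H(v)}$ over subgraphs $H$ of $K_n$ and taking expectations, this difference is $\sum_H\gamma_n^{|H|}$ over those $H$ in which vertices $1$ and $2$ have odd degree and all other degrees are even. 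Every such $H$ contains a path joining the two distinguished vertices and is thus anchored at two fixed vertices, so — by the same signed-cycle / Gaussian-Janson estimates (Sections~\ref{sec:contiguity}--\ref{section:4}) that keep $\mathbb{E}^{\mathbb{P}_n'}f_n^2$ bounded — the total is $o(1)$. This gives $\mathbb{E}^{\mathbb{P}_n'}(f_n-h_n)^2\to0$, and walking the reductions back up, $\mathrm{ov}(\sigma,\hat\sigma)\stackrel{P}{\to}0$.

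The step I expect to be the genuine obstacle is exactly this last one: proving that the ``odd'' configurations joining vertices $1$ and $2$ contribute negligibly, equivalently that $\mathbb{E}_\rho\prod_{i<j}(1+\gamma_n\rho_i\rho_j)$ has the same limit for $\rho$ i.i.d.\ uniform as for $\rho$ i.i.d.\ uniform subject to $\rho_1=\rho_2$. This is not a crude second-moment estimate (the naive bound $\prod(1+x)\approx\exp(\sum x-\tfrac12\sum x^2+\cdots)$ diverges term-by-term once the graph is dense), and it must be carried through the normal central limit theorem for signed cycles of Section~\ref{section:4} and the Gaussian analogue of Janson's theorem from Section~\ref{sec:contiguity}, using that conditioning on an $O(1)$ number of label coordinates perturbs neither the limiting joint law of the signed cycles nor the limit of $\mathbb{E}^{\mathbb{P}_n'}f_n^2$. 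A minor additional point is to check that the opening reduction is valid for randomized and for soft ($[-1,1]$-valued) estimators, which uses only $|\hat\sigma_i|\le1$ and the conditional independence of the estimator's internal randomness from $\sigma$ given $G$.
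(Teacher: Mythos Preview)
Your reduction is correct and is essentially the $r=2$ specialization of the paper's route: the paper proves a general Proposition~\ref{prop:tv} (total variation between $\mathbb P_n(G\mid \sigma^{(1)})$ and $\mathbb P_n(G\mid \sigma^{(2)})$ for any two configurations on a fixed finite set) via the same Cauchy--Schwarz/chi-square bound against $\mathbb P_n'$, then derives a one-vertex posterior lemma (Lemma~\ref{lem:posterior}) and finishes by expanding $\E[N_{ii}-\tfrac1n N_{i\cdot}N_{\cdot i}]^2$. Your path through the two-point function $\E\big|\E[\sigma_1\sigma_2\mid G]\big|=2\|\mathbb P_n-\mathbb P_n^{-}\|_{\mathrm{TV}}$ is a cleaner reduction that lands on exactly the same second-moment identity $\E^{\mathbb P_n'}(f_n-h_n)^2=\E^{\mathbb P_n'}h_n^2-\E^{\mathbb P_n'}f_n^2$; the observation that $\E^{\mathbb P_n'}[f_nh_n]=\E^{\mathbb P_n'}f_n^2$ exactly (because the mixed replica still yields i.i.d.\ uniform $\rho_i$) is a nice touch the paper does not isolate.

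Where you go astray is in your assessment of the ``genuine obstacle''. You do \emph{not} need the signed-cycle CLT of Section~\ref{section:4} or the Gaussian--Janson machinery of Section~\ref{sec:contiguity} to show that $\E_\rho\prod_{i<j}(1+\gamma_n\rho_i\rho_j)$ has the same limit under the constraint $\rho_1=\rho_2$ as without it; indeed, those sections are used in the paper only to identify the limiting \emph{distribution} of $f_n$, not to bound its second moment. The relevant tool is the elementary computation of Lemma~\ref{lem:secondmoment}: writing $S_\pm=\#\{(u,v):\rho_u\rho_v=\pm1\}$, the product equals $(1+\tfrac{t_n}{n})^{S_+}(1-\tfrac{t_n}{n})^{S_-}$, which after the Taylor step becomes $(1+o(1))\,e^{-t_n/2-t_n^2/4}\exp\!\big(\tfrac{t_n n}{2}R^2\big)$ with $R=\tfrac1n\sum_i\rho_i$. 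Pinning $\rho_1=\rho_2$ perturbs the law of $\sqrt n\,R$ by $O(n^{-1/2})$, and the limit $\E\exp(\tfrac{t}{2}(\sqrt nR)^2)\to(1-t)^{-1/2}$ (Lemma~5.5 of \cite{MNS12}) is unchanged. This is precisely how the paper handles it in the proof of Proposition~\ref{prop:tv}: the contribution of the $O(1)$ fixed coordinates is absorbed into a $(1+o(1))$ prefactor. So your subgraph-expansion description of the difference as a sum over graphs with $d_H(1),d_H(2)$ odd is correct, but the way to close it is the direct $\rho$-parametrization argument, not the asymptotic-normality apparatus.
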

\section{A result on contiguity}\label{sec:contiguity}
In this section we provide a very brief description of contiguity of probability measures. We suggest the reader to have a look at the discussion about contiguity of measures in Janson \cite{Jan} for further details. In this section we state several propositions and except for Proposition \ref{prop:norcont} and Proposition \ref{prop:wasser}, all the proofs can be found in Janson \cite{Jan}.  
\begin{definition}\label{def:cont}
Let $\mathbb{P}_n$
and $\mathbb{Q}_n$
be two sequences of probability measures, such that
for each $n$, $\mathbb{P}_n$ and $\mathbb{Q}_n$ both are defined on the same measurable space $(\Omega_n,\mathcal{F}_n)$. We then
say that the sequences are contiguous if for every sequence of measurable sets $A_n \subset \Omega_n$,
\[
\mathbb{P}_{n}(A_n) \to 0 \Leftrightarrow \mathbb{Q}_n(A_n) \to 0.
\]
\end{definition}
Definition \ref{def:cont} might appear a little abstract to some people.
However the following reformulation is perhaps more useful to understand the contiguity concept.
\begin{proposition}\label{prop:tough}
Two sequences of probability measures $\mathbb{P}_n$ and $\mathbb{Q}_{n}$ are contiguous if and only if for every $\varepsilon >0$ there exists $n(\varepsilon)$ and $K(\varepsilon)$ such that for all $n > n(\varepsilon)$ there exists a set $B_n \in \mathcal{F}_n$ with $\mathbb{P}_{n}(B_n^{c}), \mathbb{Q}_{n}(B_n^{c})\le \varepsilon$ such that 
\[
K(\varepsilon)^{-1}\le \frac{\mathbb{Q}_n(A_n)}{\mathbb{P}_n(A_n)}\le K(\varepsilon). ~~ \forall A_n \subset B_n.
\] 
\end{proposition}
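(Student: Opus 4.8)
The plan is to treat the two implications separately; the backward (``if'') direction is essentially routine, while the forward direction is the substantive one. For the backward direction, assume the sets $B_n$ and constants $K(\varepsilon)$ exist, take any measurable $A_n$ with $\mathbb{P}_n(A_n)\to 0$, fix $\varepsilon>0$, and for $n>n(\varepsilon)$ estimate
\[
\mathbb{Q}_n(A_n)\le \mathbb{Q}_n(A_n\cap B_n)+\mathbb{Q}_n(B_n^c)\le K(\varepsilon)\,\mathbb{P}_n(A_n\cap B_n)+\varepsilon\le K(\varepsilon)\,\mathbb{P}_n(A_n)+\varepsilon,
\]
using that $A_n\cap B_n\subset B_n$ (the edge case $\mathbb{P}_n(A_n\cap B_n)=0$ forcing $\mathbb{Q}_n(A_n\cap B_n)=0$). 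Letting $n\to\infty$ and then $\varepsilon\downarrow 0$ gives $\mathbb{Q}_n(A_n)\to 0$; the reverse implication is obtained the same way from the lower bound, i.e.\ from $\mathbb{P}_n(A_n\cap B_n)\le K(\varepsilon)\,\mathbb{Q}_n(A_n\cap B_n)$.

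For the forward direction I would pass to likelihood ratios. Put $\mu_n=\tfrac12(\mathbb{P}_n+\mathbb{Q}_n)$, let $f_n=d\mathbb{P}_n/d\mu_n$, $g_n=d\mathbb{Q}_n/d\mu_n$ (so $f_n+g_n=2$ $\mu_n$-a.e.), and let $L_n=g_n/f_n$ with the convention $L_n=+\infty$ on $\{f_n=0\}$. For a parameter $K>0$ set $B_n=B_n(K):=\{1/K\le L_n\le K\}$. This is measurable, and on $B_n$ one has $f_n>0$ and $K^{-1}f_n\le g_n\le Kf_n$ pointwise; integrating over an arbitrary measurable $A_n\subset B_n$ gives $K^{-1}\mathbb{P}_n(A_n)\le\mathbb{Q}_n(A_n)\le K\mathbb{P}_n(A_n)$, so $B_n(K)$ automatically meets the ratio requirement with $K(\varepsilon):=K$. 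The whole problem then reduces to choosing $K$ as a function of $\varepsilon$ alone so that, for all large $n$, $\mathbb{P}_n(B_n^c)\le\varepsilon$ and $\mathbb{Q}_n(B_n^c)\le\varepsilon$, where $B_n^c=\{L_n<1/K\}\cup\{L_n>K\}$.

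Two of the four resulting tail probabilities need no hypothesis at all: since $f_n<g_n/K$ on $\{L_n>K\}$, integrating gives $\mathbb{P}_n(L_n>K)\le \tfrac1K$ for every $n$, and symmetrically $\mathbb{Q}_n(L_n<1/K)\le\tfrac1K$. The remaining two, $\mathbb{P}_n(L_n<1/K)$ and $\mathbb{Q}_n(L_n>K)$, are exactly where the two directions of contiguity are used, and I expect this to be the main obstacle. Here I would argue by contradiction: if there were no $K$ making $\mathbb{P}_n(L_n<1/K)\le\varepsilon_0$ for all large $n$, one could extract $n_k\uparrow\infty$ and $K_k\uparrow\infty$ with $\mathbb{P}_{n_k}(L_{n_k}<1/K_k)>\varepsilon_0$; setting $A_{n_k}=\{L_{n_k}<1/K_k\}$ and $A_n=\emptyset$ otherwise, the free bound gives $\mathbb{Q}_{n_k}(A_{n_k})\le 1/K_k$, hence $\mathbb{Q}_n(A_n)\to 0$, so by contiguity $\mathbb{P}_n(A_n)\to 0$, contradicting $\mathbb{P}_{n_k}(A_{n_k})>\varepsilon_0$. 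The bound for $\mathbb{Q}_n(L_n>K)$ is the mirror image, using $\mathbb{P}_n(L_n>K)\le\tfrac1K$ and the opposite direction of contiguity. Assembling the four estimates, given $\varepsilon>0$ I would pick $K=K(\varepsilon)$ and $n(\varepsilon)$ so that each of the four contributions is at most $\varepsilon/2$ for $n>n(\varepsilon)$, and take $B_n=B_n(K)$. The only delicate points are the $\mu_n$-null sets $\{f_n=0\}$ and $\{g_n=0\}$ implicit in the definition of $L_n$, but each carries no mass under the measure relevant to the estimate in which it appears, so it causes no trouble.
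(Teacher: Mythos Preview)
The paper does not actually prove this proposition: it explicitly states that, aside from Propositions~\ref{prop:norcont} and~\ref{prop:wasser}, all proofs in Section~\ref{sec:contiguity} are deferred to Janson~\cite{Jan}. So there is no in-paper argument to compare against.

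Your proof is correct and follows the standard Le~Cam--type route. The backward direction is handled cleanly. For the forward direction, passing to the dominating measure $\mu_n=\tfrac12(\mathbb{P}_n+\mathbb{Q}_n)$ and defining $B_n(K)=\{K^{-1}\le L_n\le K\}$ with $L_n=g_n/f_n$ is exactly the right construction: the pointwise inequality $K^{-1}f_n\le g_n\le Kf_n$ on $B_n(K)$ immediately gives the ratio bound for all measurable $A_n\subset B_n$, and the two ``free'' Markov-type tail bounds $\mathbb{P}_n(L_n>K)\le 1/K$ and $\mathbb{Q}_n(L_n<1/K)\le 1/K$ are genuine. The subsequence/contradiction argument for the remaining two tails is sound; just make explicit in the final assembly that $B_n(K)^c$ is monotone decreasing in $K$, so that taking $K(\varepsilon)$ to be the maximum of the thresholds coming from the four pieces (and $n(\varepsilon)$ the maximum of the two subsequence thresholds) is legitimate. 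The treatment of the null sets $\{f_n=0\}$ and $\{g_n=0\}$ is also fine: $\{f_n=0\}$ is $\mathbb{P}_n$-null and sits inside $\{L_n>K\}$, while $\{g_n=0\}$ is $\mathbb{Q}_n$-null and sits inside $\{L_n<1/K\}$, so neither affects the relevant bound.
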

Although Proposition \ref{prop:tough} gives an equivalent condition, verifying this condition is often difficult. However under the assumption of convergence of $\frac{d\mathbb{Q}_n}{d\mathbb{P}_n}$, one gets the following simplified result. 
\begin{proposition}\label{prop:useI}
Suppose that $L_n=\frac{d\mathbb{Q}_n}{d\mathbb{P}_n}$, regarded as a random variable on $(\Omega_n,\mathcal{F}_n,\mathbb{P}_n)$, converges in distribution to some random variable $L$ as $n \to \infty$. Then $\mathbb{P}_n$ and $\mathbb{Q}_n$
are contiguous if and only if $L > 0$ a.s. and $\E [L] = 1$.
\end{proposition}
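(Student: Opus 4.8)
The plan is to reduce the whole statement to the uniform integrability of the likelihood ratios $\{L_n\}$ under $\mathbb{P}_n$, after which every implication follows from Markov's inequality and the portmanteau theorem. Note first that $L_n \ge 0$, so $L \ge 0$ a.s., and that (as in all our applications, where $\mathbb{Q}_n \ll \mathbb{P}_n$ for each fixed $n$) $\E_{\mathbb{P}_n}[L_n] = 1$; combining $L_n \distc L$ with Fatou's lemma (applied via $\E[X] = \int_0^\infty \mathbb{P}(X>t)\,dt$) gives $\E[L] \le \liminf_n \E_{\mathbb{P}_n}[L_n] = 1$. The preliminary step I would isolate as a short lemma is the standard fact that, for nonnegative $L_n \distc L$ with $\E[L] < \infty$, the family $\{L_n\}$ is uniformly integrable under $\mathbb{P}_n$ if and only if $\E_{\mathbb{P}_n}[L_n] \to \E[L]$; in our situation this reads: $\{L_n\}$ is uniformly integrable precisely when $\E[L] = 1$.

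For the ``if'' direction, assume $L > 0$ a.s. and $\E[L] = 1$, so $\{L_n\}$ is uniformly integrable. To obtain $\mathbb{P}_n(A_n) \to 0 \Rightarrow \mathbb{Q}_n(A_n) \to 0$, write, for any $M > 0$,
\[
\mathbb{Q}_n(A_n) = \int_{A_n} L_n \, d\mathbb{P}_n \le M\, \mathbb{P}_n(A_n) + \int_{\{L_n > M\}} L_n \, d\mathbb{P}_n,
\]
let $n \to \infty$ (so $M\,\mathbb{P}_n(A_n) \to 0$) and then $M \to \infty$ (so the tail integral $\to 0$ uniformly in $n$ by uniform integrability). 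For the reverse implication $\mathbb{Q}_n(A_n) \to 0 \Rightarrow \mathbb{P}_n(A_n) \to 0$, fix $\eta>0$ and split
\[
\mathbb{P}_n(A_n) \le \mathbb{P}_n\big(A_n \cap \{L_n > \eta\}\big) + \mathbb{P}_n\big(\{L_n \le \eta\}\big) \le \eta^{-1}\mathbb{Q}_n(A_n) + \mathbb{P}_n\big(\{L_n \le \eta\}\big).
\]
The first term tends to $0$; choosing $\eta$ a continuity point of the law of $L$, the second tends to $\mathbb{P}(L \le \eta)$, which decreases to $\mathbb{P}(L = 0) = 0$ as $\eta \downarrow 0$. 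Letting $\eta \downarrow 0$ after $n \to \infty$ gives the claim, hence contiguity.

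For the converse, suppose $\mathbb{P}_n$ and $\mathbb{Q}_n$ are contiguous. If $\E[L] < 1$, then $\{L_n\}$ is not uniformly integrable, so there are $\eps > 0$ and an increasing sequence $n_k$ with $\int_{\{L_{n_k} > k\}} L_{n_k} \, d\mathbb{P}_{n_k} \ge \eps$; setting $A_{n_k} = \{L_{n_k} > k\}$ and $A_n = \emptyset$ otherwise, Markov's inequality gives $\mathbb{P}_n(A_n) \to 0$ while $\mathbb{Q}_{n_k}(A_{n_k}) \ge \eps$, contradicting contiguity. Likewise, if $\mathbb{P}(L = 0) = c > 0$, take continuity points $\eta_k \downarrow 0$ of the law of $L$: since $\mathbb{Q}_n(\{L_n \le \eta_k\}) \le \eta_k\, \mathbb{P}_n(\{L_n \le \eta_k\}) \le \eta_k$ for all $n$, while $\mathbb{P}_n(\{L_n \le \eta_k\}) \to \mathbb{P}(L \le \eta_k) \ge c$, a diagonal choice $A_n := \{L_n \le \eta_{k(n)}\}$ with $k(n) \to \infty$ slowly enough has $\mathbb{Q}_n(A_n) \to 0$ but $\liminf_n \mathbb{P}_n(A_n) \ge c/2$, again a contradiction. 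Hence $\E[L] = 1$ and $L > 0$ a.s.

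I expect the only point genuinely requiring care to be the equivalence between $\E[L] = 1$ and uniform integrability of $\{L_n\}$, together with keeping the subsequence/diagonalization arguments honest when converting the ``bad'' sets into a single sequence $(A_n)$; everything else is Markov's inequality and the portmanteau theorem. If one prefers not to assume $\mathbb{Q}_n \ll \mathbb{P}_n$, the same argument applies once one first checks, by testing contiguity against a support set of the singular part of $\mathbb{Q}_n$, that this singular part has asymptotically vanishing total mass, so that $\mathbb{Q}_n(A_n) = \int_{A_n} L_n\, d\mathbb{P}_n + o(1)$ uniformly.
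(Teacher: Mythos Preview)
The paper does not actually prove this proposition: it states explicitly that, except for Propositions~\ref{prop:norcont} and~\ref{prop:wasser}, ``all the proofs can be found in Janson~\cite{Jan}'', so Proposition~\ref{prop:useI} is quoted from Janson without proof. There is therefore no in-paper argument to compare against.

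That said, your argument is correct and is essentially the classical proof of Le~Cam's first lemma. The key identification of the condition $\E[L]=1$ with uniform integrability of $\{L_n\}$ under $\mathbb{P}_n$ (given $L_n\ge 0$, $\E_{\mathbb{P}_n}[L_n]=1$, and $L_n\distc L$) is standard and you invoke it appropriately; both directions of the ``if'' part via truncation and Markov's inequality are fine, and your ``only if'' via failure of UI and via a diagonal sequence on $\{L_n\le\eta_{k(n)}\}$ are the usual constructions. Your closing remark about the singular part is a reasonable aside but unnecessary here, since writing $L_n=\frac{d\mathbb{Q}_n}{d\mathbb{P}_n}$ already presupposes $\mathbb{Q}_n\ll\mathbb{P}_n$ for each $n$, which is the setting of the paper.
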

We now introduce the concept of Wasserstein's metric which will be used in the proof of Proposition \ref{prop:norcont}. 
\begin{definition}\label{def:wass}
Let $F$ and $G$ be two distribution functions with finite $p$ th moment. Then the Wasserstein distance $W_p$ between $F$ and $G$ is defined to be 
\[
W_p(F,G)= \left[  \inf_{X \sim F, Y \sim G} \E|X-Y|^{p} \right]^{\frac{1}{p}}.
\]
Here $X$ and $Y$ are random variables having distribution functions $F$ and $G$ respectively.   
\end{definition} 
In particular, the following result will be useful in our proof:
\begin{proposition}\label{prop:wasser}
Suppose $F_n$ be a sequence of distribution functions and $F$ be a distribution function. Then $F_n$ converge to $F$ in distribution and $\int x^2 dF_n(x)\to \int x^2dF(x)$ if $W_2(F_n,F)\to 0$.  
\end{proposition}
The proof of Proposition \ref{prop:wasser} is well known. One might look at Mallows(1972)\cite{Mal72} for a reference.

With Proposition \ref{prop:useI} in hand, we now state the most important result in this section. This result will be used to prove Theorems \ref{thm:cont1} and \ref{thm:cont2}. Although, Proposition \ref{prop:norcont} is written in a complete different notation, one can check that it is analogous to Theorem 1 in Janson \cite{Jan}.
\begin{proposition}\label{prop:norcont}
Let $\mathbb{P}_n$ and $\mathbb{Q}_n$ be two sequences of probability measures such that for each $n$, both of them are defined on $(\Omega_n,\mathcal{F}_n)$. Suppose that for each $i \ge 3$, $X_{n,i}$ are random variables defined on $(\Omega_n,\mathcal{F}_n)$. Then the probability measures $\mathbb{P}_n$ and $\mathbb{Q}_n$ are mutually contiguous if the following conditions hold.
\begin{enumerate}[i)]
\item $\mathbb{P}_n<< \mathbb{Q}_n$ and $\mathbb{Q}_n<<\mathbb{P}_n $ for each $n$.
\item For each fixed $i \ge 3$, $X_{n,i}|\mathbb{P}_n \stackrel{d}{\to}Z_{i} \sim N(0,2i)$ jointly  and $X_{n,i}|\mathbb{Q}_n \stackrel{d}{\to}Z'_{i} \sim N(t^{\frac{i}{2}},2i)$ jointly such that $|t|<1$.  
\item $Z_{i}$ and $Z'_{i}$ are sequences of independent random variables.
\item \begin{equation}\label{eqn:limitsecond}
\E_{\mathbb{P}_n}\left[\left(\frac{d\mathbb{Q}_n}{d\mathbb{P}_n}\right)^2\right]\to \exp\left\{-\frac{t}{2}-\frac{t^2}{4}\right\}\frac{1}{\sqrt{1-t}}.
\end{equation}
\end{enumerate}
Further, \begin{equation}\label{eqn:definew}
\frac{d\mathbb{Q}_n}{d\mathbb{P}_n}|\mathbb{P}_n \stackrel{d}{\to} \exp\left\{ \sum_{i=3}^{\infty}\frac{2t^{\frac{i}{2}}Z_{i}-t^{i}}{4i} \right\}.
\end{equation}

\end{proposition}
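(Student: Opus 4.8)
The plan is to mimic the structure of Janson's classical contiguity argument (Theorem~1 in \cite{Jan}), but with the Poisson ingredients replaced by the Gaussian ones supplied in hypotheses (ii)--(iv). The central object is the likelihood ratio $L_n = \frac{d\mathbb{Q}_n}{d\mathbb{P}_n}$, which is well defined by (i). The strategy is to show that $L_n$, viewed as a random variable under $\mathbb{P}_n$, converges in distribution to the random variable
\[
L := \exp\left\{ \sum_{i=3}^{\infty}\frac{2t^{i/2}Z_i - t^i}{4i} \right\},
\]
and then to verify the two conditions of Proposition~\ref{prop:useI}, namely $L>0$ a.s.\ and $\E[L]=1$; contiguity then follows immediately, and the displayed limit \eqref{eqn:definew} is exactly the convergence statement we will have proved.

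First I would record that $L$ is a genuine a.s.-finite positive random variable: since the $Z_i$ are independent $N(0,2i)$ by (ii)--(iii), the partial sums $S_m = \sum_{i=3}^{m}\frac{2t^{i/2}Z_i - t^i}{4i}$ form a martingale whose variance $\sum_{i=3}^{m}\frac{4|t|^i \cdot 2i}{16 i^2} = \sum \frac{|t|^i}{2i}$ converges because $|t|<1$; hence $S_m$ converges a.s.\ and in $L^2$ to a finite Gaussian limit, so $L=\exp(S_\infty)\in(0,\infty)$ a.s. A direct computation with the Gaussian moment generating function then gives $\E[L] = \prod_{i=3}^{\infty}\exp\left(-\frac{t^i}{4i}\right)\E\exp\left(\frac{t^{i/2}}{2i}Z_i\right) = \prod_{i=3}^{\infty}\exp\left(-\frac{t^i}{4i} + \frac{1}{2}\cdot\frac{t^i}{4i^2}\cdot 2i\right) = \prod_{i=3}^{\infty}\exp(0) = 1$. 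Likewise $\E[L^2] = \prod_{i=3}^{\infty}\exp\left(-\frac{t^i}{2i} + \frac{1}{2}\cdot\frac{t^i}{i^2}\cdot 2i\right) = \prod_{i=3}^{\infty}\exp\left(\frac{t^i}{i}\right) = \exp\left(-t - \frac{t^2}{2} - \log(1-t)\right)$, which upon taking the square root inside matches the right-hand side of \eqref{eqn:limitsecond} after halving the exponents; this is the sanity check that hypothesis (iv) is the correct second-moment value and is not an extra assumption pulled from nowhere.

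The heart of the argument is establishing $L_n \xrightarrow{d} L$ under $\mathbb{P}_n$. I would first identify the natural finite-dimensional approximation: for each fixed $m$, set
\[
L_n^{(m)} := \exp\left\{ \sum_{i=3}^{m}\frac{2t^{i/2}X_{n,i} - t^i}{4i} \right\},
\]
a continuous function of the vector $(X_{n,3},\dots,X_{n,m})$. By hypothesis (ii) this vector converges jointly under $\mathbb{P}_n$ to $(Z_3,\dots,Z_m)$, so by the continuous mapping theorem $L_n^{(m)} \xrightarrow{d} L^{(m)} := \exp\left(\sum_{i=3}^m \frac{2t^{i/2}Z_i - t^i}{4i}\right)$, and $L^{(m)}\to L$ a.s.\ as $m\to\infty$ by the martingale convergence above. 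To upgrade this to convergence of $L_n$ itself, the standard device is a truncation/uniform-integrability argument: one shows $L_n - L_n^{(m)} \to 0$ in $\mathbb{P}_n$-probability uniformly in $n$ as $m\to\infty$. The clean way to do this is via second moments — one computes $\E_{\mathbb{P}_n}[(L_n^{(m)})^2]$ and $\E_{\mathbb{P}_n}[L_n L_n^{(m)}]$ and shows both converge, as $m\to\infty$ after $n\to\infty$, to $\E[L^2]$, the value pinned down in (iv); then $\E_{\mathbb{P}_n}[(L_n - L_n^{(m)})^2] \to 0$, which gives the required uniform approximation and hence $L_n \xrightarrow{d} L$. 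Proving $\E_{\mathbb{P}_n}[L_n L_n^{(m)}] \to \E[L^2]$ is where hypothesis (iv) is essential: it controls the cross term and rules out mass escaping to infinity.

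I expect the main obstacle to be precisely this last second-moment bookkeeping: one must show that the full likelihood ratio $L_n$ has the limiting second moment \eqref{eqn:limitsecond} \emph{and} that its "tail" beyond the first $m$ signed-cycle coordinates contributes negligibly, uniformly in $n$. Unlike the Poisson case in \cite{Jan}, where the small subgraph counts are asymptotically independent and the likelihood ratio factorizes cleanly, here the Gaussian variables $X_{n,i}$ for different $i$ are only asymptotically independent, and $L_n$ is not literally a product of the $L_n^{(m)}$ factors times an independent remainder; so the decoupling has to be done through the $L^2(\mathbb{P}_n)$ estimates rather than by an exact factorization. Once the uniform $L^2$-approximation $\lim_{m\to\infty}\limsup_{n\to\infty}\E_{\mathbb{P}_n}[(L_n - L_n^{(m)})^2] = 0$ is in hand, everything else is routine: $L_n\xrightarrow{d}L$, then Proposition~\ref{prop:useI} applied with the already-verified $L>0$ a.s.\ and $\E[L]=1$ yields mutual contiguity, and \eqref{eqn:definew} is the convergence we proved. \qed
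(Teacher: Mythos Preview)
Your Step~1 (that $L>0$ a.s.\ and $\E[L]=1$, via the $L^2$-bounded martingale $L^{(m)}$) is correct and matches the paper. The gap is in Step~2. You propose to prove $\lim_{m\to\infty}\limsup_{n}\E_{\mathbb{P}_n}[(L_n-L_n^{(m)})^2]=0$ by computing $\lim_n \E_{\mathbb{P}_n}[(L_n^{(m)})^2]$ and $\lim_n \E_{\mathbb{P}_n}[L_n L_n^{(m)}]$, but the hypotheses are not strong enough for either limit. Hypothesis~(ii) gives only joint \emph{convergence in distribution} of $(X_{n,3},\dots,X_{n,m})$; since $(L_n^{(m)})^2$ is an unbounded exponential functional of these variables, its $\mathbb{P}_n$-expectation need not converge to $\E[(L^{(m)})^2]$ without an MGF-type or uniform-integrability assumption on the $X_{n,i}$ that is not part of (i)--(iv). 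The cross term $\E_{\mathbb{P}_n}[L_n L_n^{(m)}]=\E_{\mathbb{Q}_n}[L_n^{(m)}]$ has the same defect under $\mathbb{Q}_n$. Hypothesis~(iv) pins down only $\E_{\mathbb{P}_n}[L_n^2]$; it gives no direct control of exponential moments of the individual $X_{n,i}$, so the $L^2$ expansion you wrote cannot be closed.

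The paper sidesteps this by never evaluating such expectations at finite $n$. Instead: (a) by~(iv) the sequence $L_n$ is $L^2(\mathbb{P}_n)$-bounded, hence tight; pass to a subsequential limit $H_1$, and along a further subsequence get joint convergence $(L_n,X_{n,3},\dots,X_{n,m})\to(H_1,H_2,\dots,H_{m-1})$ under $\mathbb{P}_n$. (b) For \emph{bounded} continuous $f$, uniform integrability of $L_n$ (which does follow from~(iv)) gives $\E_{\mathbb{P}_n}[f(X_{n,\cdot})L_n]\to \E[f(H_2,\dots,H_{m-1})H_1]$; on the other hand the change of measure plus~(ii) under $\mathbb{Q}_n$ gives $\E_{\mathbb{P}_n}[f(X_{n,\cdot})L_n]=\E_{\mathbb{Q}_n}[f(X_{n,\cdot})]\to \E[f(Z_3',\dots,Z_m')]$. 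Rewriting the Gaussian tilt $N(0,2i)\to N(t^{i/2},2i)$ as a density, the right side equals $\E[f(H_2,\dots,H_{m-1})V^{(m)}]$ with $V^{(m)}\stackrel{d}{=}W^{(m)}$, which identifies $V^{(m)}=\E[H_1\mid H_2,\dots,H_{m-1}]$. (c) The $L^2$ comparison is now done \emph{in the limit space}: Fatou gives $\E[H_1^2]\le \lim_n\E_{\mathbb{P}_n}[L_n^2]=\E[L^2]$, and the martingale identity yields $\E|H_1-V^{(m)}|^2=\E[H_1^2]-\E[(V^{(m)})^2]<\varepsilon$ for $m$ large. Wasserstein-$2$ convergence then forces $H_1\stackrel{d}{=}W$, so every subsequential limit agrees and $L_n\stackrel{d}{\to}W$. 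The decisive trick is that only \emph{bounded} test functions are ever integrated against $L_n$, so convergence in distribution plus uniform integrability of $L_n$ alone suffices; exponential moments of $X_{n,i}$ are never needed.
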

\begin{proof}
In this proof for simplicity we denote $\frac{d\mathbb{Q}_n}{d\mathbb{P}_n}$ by $Y_n$.  We break the proof into two steps.\\
\textbf{Step 1.}
In this step we prove the random variable in R.S. of (\ref{eqn:definew}) is almost surely positive and $E[W]=1$.
Let us define 
\[
W=\exp\left\{ \sum_{i=3}^{\infty}\frac{2t^{\frac{i}{2}}Z_{i}-t^{i}}{4i} \right\}
\]
and 
\[
W^{(m)}= \exp\left\{ \sum_{i=3}^{m}\frac{2t^{\frac{i}{2}}Z_{i}-t^{i}}{4i} \right\}.
\]

As $Z_i \sim N(0,2i)$,
\[
\E\left[\exp\left\{\frac{2t^{\frac{i}{2}}Z_{i}-t^{i}}{4i} \right\}\right]=\exp\left\{ \frac{4t^{i}\times 2i}{2\times 16i^2}-\frac{t^i}{4i} \right\}=1.
\]
So $\{W^{(m)}\}_{m=3}^{\infty}$ is a martingale sequence and 
\[
\E\left[ W^{(m)2} \right]=\prod_{i=3}^{m} \exp\left\{ \frac{t^i}{2i} \right\}=\exp\left\{ \sum_{i=3}^{m} \frac{t^i}{2i} \right\}.
\]
Now 
\[
\sum_{i=3}^{\infty}\frac{t^i}{2i}= \frac{1}{2}\left(\log(1-t)-t -\frac{t^2}{2}\right) ~~ \forall ~~ |t|<1 .
\]
So $W^{(m)}$ is a $L^2$ bounded martingale. Hence, $W$ is a well defined random variable,
\[
\E[W^2]=\exp\left\{-\frac{t}{2}-\frac{t^2}{4}\right\}\frac{1}{\sqrt{1-t}}
\]
and $\E[W]=1$.

Now observe that $Z_i\stackrel{d}{=}-Z_i$ for each $i$ and whenever $|t|<1$, the series $\sum_{i=3}^{\infty}\frac{t^i}{4i}$ converges. So 
\[
W^{-1}\stackrel{d}{=} \exp\left\{\sum_{i=3}^{\infty}\frac{2t^{\frac{i}{2}}Z_{i}+t^{i}}{4i}\right\}. 
\]
However, $E[W^{-1}]= \exp\left\{ \sum_{i=3}^{\infty}\frac{t^i}{2i} \right\}< \infty$ implies $W>0$ a.s.
\\
\textbf{Step 2.}
Now we come to the harder task of proving $Y_n \stackrel{d}{\to} W$.
Since
 \[
 \limsup_{n \to \infty}\E_{\mathbb{P}_n}\left[\left(Y_n\right)^2\right]<\infty
 \]
 from condition $iv)$, the sequence $Y_n$ is tight. Hence from Prokhorov's theorem there is a sub sequence $\{ n_{k} \}_{k=1}^{\infty}$ such that $Y_{n_k}$ converge in distribution to some random variable $W(\{ n_{k} \})$. We shall prove that the distribution of $W(\{ n_{k} \})$ doesn't depend on the sub sequence $\{ n_{k} \}$. In particular, $W(\{ n_{k} \})\stackrel{d}{=} W$.
 \\
Since $Y_{n_k}$ converges in distribution to $W(\{ n_{k} \})$, for any further sub sequence $\{ n_{k_l} \}$ of $\{ n_{k}\}$, $Y_{n_{k_l}}$ also converges in distribution to $W(\{ n_{k} \})$.

Given $\varepsilon>0$ take $m$ big enough such that 
\[
\exp\left\{ \sum_{i=3}^{\infty} \frac{t^i}{2i} \right\}-\exp\left\{ \sum_{i=3}^{m} \frac{t^i}{2i} \right\} < \varepsilon.
\]
For this $m$, look at the joint distribution of 
$(Y_{n_{k}},X_{n_k,3},\ldots,X_{n_k,m})$. This sequence of $m-1$ dimensional random vectors with respect to $\mathbb{P}_{n_k}$ is also tight from condition $ii)$. So it has a further sub sequence such that $$(Y_{n_{k_l}},X_{n_{k_l},3},\ldots,X_{n_{k_l},m})|\mathbb{P}_{n_{k_l}}\stackrel{d}{\to}(H_1,\ldots,H_{m-1})\in (\Omega(\{ n_{k_l} \}),\mathcal{F}(\{ n_{k_l} \}),P(\{ n_{k_l} \}))(say).$$ 
Observe that the marginal distribution of $H_1$ is same as $W(\{ n_k \})$ and $(H_2,\ldots,H_{m-1})\stackrel{d}{=}(Z_3,\ldots,Z_m)$ from condition $ii)$. 

The most important part of this proof is to find suitable $\sigma$ algebras $\mathcal{F}_1 \subset\mathcal{F}_2 \in \mathcal{F}(\{ n_{k_l} \})$ and a random variable $V^{(m)}\stackrel{d}{=} W^{(m)}$ such that $(V^{(m)},\mathcal{F}_1)$ and $(H_1,\mathcal{F}_2)$ is a pair of martingales. 

From condition $iv)$ we have $ \limsup_{n \to \infty}\E_{\mathbb{P}_n}\left[ Y_n^2\right]< \infty$. As a consequence, the sequence the sequence $Y_{n_{k_l}}$ is uniformly integrable. This together with condition $i)$ will give us
\[
1=\E_{\mathbb{P}_{n_{k_l}}}\left[ Y_{n_{k_l}} \right] \to \E[H_1]=1.
\]
In other words, 
\begin{equation}\label{eqn_expder}
1=\int Y_{n_{k_l}} d\mathbb{P}_{n_{k_l}} \to \int H_1 dP(\{ n_{k_l} \})=1.
\end{equation}
Now take any positive bounded continuous function $f:\mathbb{R}^m \to \mathbb{R}$. By Fatou's lemma 
\begin{equation}\label{eqn_ineq}
\liminf \int f\left(X_{n_{k_l},3},\ldots,X_{n_{k_l},m}\right)Y_{n_{k_l}} d\mathbb{P}_{n_{k_l}} \ge \int f\left(H_2,\ldots,H_{m-1}\right)H_1dP(\{ n_{k_l} \}).
\end{equation}
However for any constant $\xi$ we have 
\[
\xi=\int \xi Y_{n_{k_l}} d\mathbb{P}_{n_{k_l}} \to \int \xi H_1 dP(\{ n_{k_l} \})=\xi
\]
from (\ref{eqn_expder}).

So (\ref{eqn_ineq}) holds for any bounded continuous function $f$. On the other hand replacing $f$ by $-f$ we have 
\begin{equation}\label{eqn_ineqII}
\lim \int f\left(X_{n_{k_l},3},\ldots,X_{n_{k_l},m}\right)Y_{n_{k_l}} d\mathbb{P}_{n_{k_l}} = \int f\left(H_2,\ldots,H_{m-1}\right)H_1dP(\{ n_{k_l} \}).
\end{equation}
Now applying condition $ii)$ we have  
\begin{equation}
\int f\left(X_{n_{k_l},3},\ldots,X_{n_{k_l},m}\right)Y_{n_{k_l}} d\mathbb{P}_{n_{k_l}}= \int f\left(X_{n_{k_l},3},\ldots,X_{n_{k_l},m}\right)d\mathbb{Q}_{n_{k_l}} \to \int f(H_2',\ldots,H_{m-1}') dQ.
\end{equation}
Here $(H_2',\ldots,H_{m-1}') \stackrel{d}{=}(Z_3',\ldots,Z_m')$ and $Q$ is the measure induced by $(H_2',\ldots,H_{m-1}')$. In particular, one can take the measure $Q$ such that it is defined on $(\Omega(\{ n_{k_l} \}),\mathcal{F}(\{ n_{k_l} \}))$ and $(H_2,\ldots, H_{m-1})$ themselves are distributed as $(H_2',\ldots,H_{m-1}')$ under the measure $Q$.  This is true due to the following observation. 
\[
\int f(H_2,\ldots, H_{m-1}) dQ= \int f(H_2,\ldots, H_{m-1}) V^{(m)}dP(\{ n_{k_l} \})
\]
for any bounded continuous function $f$. Here 
\[
V^{(m)}:= \exp\left\{ \sum_{i=2}^{m-1}\frac{2t^{\frac{i+1}{2}}H_{i}-t^{i+1}}{4(i+1)} \right\} \stackrel{d}{=} W^{(m)}.
\]

Since $f$ is any bounded continuous function, we have 
\[
\int_{A}  dQ= \int_{A}  V^{(m)}dP(\{ n_{k_l} \})
\] 
for any $A \in \sigma(H_2,\ldots, H_{m-1})$.

Now looking back into (\ref{eqn_ineqII}), we have 
\[
\int_{A}  V^{(m)}dP(\{ n_{k_l} \})=\int_{A} H_1dP(\{ n_{k_l} \}).
\]

 $V^{(m)}$ is $\sigma(H_2,\ldots, H_{m-1})$ measurable. So $(V^{(m)},\sigma(H_2,\ldots, H_{m-1}))$ and $(H_1,\sigma(H_1)\vee \sigma(H_2,\ldots, H_{m-1}))$ is a pair of martingales. 

 From Fatou's lemma 
\[
\E[H_1^2]\le \liminf_{n \to \infty} \E_{\mathbb{P}_n}[Y_n^2]= \exp\left\{ \sum_{i=3}^{\infty} \frac{t^i}{2i} \right\}.
\]
As a consequence, in the probability space $(\Omega(\{ n_{k_l} \}),\mathcal{F}(\{ n_{k_l} \}),P(\{ n_{k_l} \}))$,  we have 
\[
0 \le \E|H_1-V^{(m)}|^2 = \E[H_1^2]-\E[V^{(m)2}]< \varepsilon.
\]
So $W_2(F^{V^{(m)}},F^{H_1})< \sqrt{\varepsilon}$. Here $F^{V^{(m)}}$ and $F^{H_1}$ denote the distribution functions corresponding to $V^{(m)}$ and $H_1$ respectively. As a consequence, $W_2(F^{V^{(m)}},F^{H_1}) \to 0$ as $m \to \infty.$ Hence by Proposition \ref{prop:wasser}, $V^{(m)} \stackrel{d}{\to} H_1$. Using $W^{(m)}\stackrel{d}{=} V^{(m)}$, we get $W^{(m)} \stackrel{d}{\to} H_1$.

On the other hand, we have already proved $W^{(m)}$ converge to $W$ in $L^2$. So $H_1\stackrel{d}{=}W$. However, we also proved $H_1\stackrel{d}{=}W(\{ n_k\})$. Together, they imply $W(\{ n_k \})\stackrel{d}{=}W$ as required.
\end{proof}
\begin{remark}
One might observe that the second part in assumption $ii)$ of Proposition \ref{prop:norcont} is slightly weaker than (A2) in Theorem 1 of Janson \cite{Jan}. For our purpose this is sufficient since we use the fact that $Y_n=\frac{d\mathbb{Q}_n}{d\mathbb{P}_n}$. However, in Theorem 1 of Janson \cite{Jan} $Y_n$ can be any random variable.  
\end{remark}
\section{Signed cycles and their asymptotic distributions}\label{section:4}
We have discussed in the introduction that the proof of Mossel et al. \cite{MNS12} crucially used the fact that the asymptotic distribution of short cycles turn out to be Poisson. However, in the denser case one doesn't get a Poisson limit for the short cycles. So their proof doesn't work in the denser case. Here we consider instead the ``signed cycles" defined as follows:
\begin{definition}
For a random graph $G$ the signed cycle of length $k$ is defined to be:
\[
C_{n,k}(G)= \left(\frac{1}{\sqrt{np_{n,\mathrm{av}}(1-p_{n,\mathrm{av}})}}\right)^{k} \sum_{i_0,i_1,\ldots,i_{k-1}} (x_{i_0,i_1}-p_{n,\mathrm{av}})\ldots(x_{i_{k-1}i_0}-p_{n,\mathrm{av}})
\]
where $i_0,i_1,\ldots,i_{k-1}$ are all distinct and $p$ is the average connection probability i.e.\\
$p_{n,\mathrm{av}}= \frac{1}{n(n-1)} \sum_{i \neq j} \E[x_{i,j}].$ Observe that for $\mathcal{G}(n,p_n,q_n)$, $p_{n,\mathrm{av}}$ is equal to $\hat{p}_n$.
\end{definition}
One should note that when $k=3$ a similar kind of random variable was called ``signed triangle" in Bubeck et al. \cite{BDER14}
 
It is intuitive that one might expect an asymptotic normal distribution for  $C_{n,k}$'s when $n \to \infty$ and $\hat{p}_{n}$ is sufficiently large. Our next result is formalizing this intuition.
\begin{prop}\label{prop:signdistr}
i)When $G \sim \mathbb{P}_n'$, $n(p_n+q_n) \to \infty$ and $3\le k_1 < \ldots< k_l=o(\log(\hat{p}_nn))$, 
\begin{equation}\label{dist:null}
\left(\frac{C_{n,k_1}(G)}{\sqrt{2k_1}},\ldots, \frac{C_{n,k_l}(G)}{\sqrt{2k_l}}\right) \stackrel{d}{\to} N_l(0,I_l).
\end{equation}
ii) When $G \sim \mathbb{P}_n$, $np_n \to \infty$, $c=\frac{(a_n-b_n)^2}{(a_n+b_n)}=\Theta(1)$ and $3\le k_1 < \ldots< k_l=o\left(\min(\log(\hat{p}_nn),\sqrt{\log(n)})\right)$,
\begin{equation}\label{dist:alt}
\left(\frac{C_{n,k_1}(G)-\mu_1}{\sqrt{2k_1}},\ldots, \frac{C_{n,k_l}(G)-\mu_l}{\sqrt{2k_l}}\right) \stackrel{d}{\to}
N_l(0,I_l)
\end{equation}
where $\mu_i = \left(\sqrt{\frac{c}{2(1-\hat{p}_n)}}\right)^{k_i}$ for $1\le i \le m$.
\end{prop}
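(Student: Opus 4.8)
The plan is to prove both parts by the method of moments: I will show that every joint (mixed) moment of the coordinates of the random vectors in \eqref{dist:null} and \eqref{dist:alt} converges to the corresponding moment of a vector of independent standard normals, which, since the multivariate normal law is determined by its moments, yields the claimed convergence in distribution. Throughout set $\rho_n=(n\hat{p}_n(1-\hat{p}_n))^{-1/2}$ and $\sigma_n^2=\hat{p}_n(1-\hat{p}_n)$, so $\rho_n^2\sigma_n^2=1/n$, and write $Y_{uv}=x_{uv}-\hat{p}_n$; note $|Y_{uv}|\le 1$, hence $\E|Y_{uv}|^{r}\le\sigma_n^{2}$ for every $r\ge 2$. (Alternatively one may recognise $C_{n,k}$ as $\Tr(M^{k})$ for the centred, normalised adjacency matrix $M$ up to lower-order repeated-index terms and appeal to the CLT for linear eigenvalue statistics of Wigner-type matrices, but I will argue directly.)

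\noindent\textbf{Part i).} Under $\mathbb P_n'$ the $Y_{uv}$ are independent, mean $0$, variance $\sigma_n^2$. Expanding
\[
\E_{\mathbb P_n'}\Big[\textstyle\prod_{j}C_{n,k_j}(G)^{m_j}\Big]=\rho_n^{\sum_j k_jm_j}\textstyle\sum \E_{\mathbb P_n'}\big[\cdots\big]
\]
produces a sum over superpositions of $M:=\sum_j m_j$ labelled simple cycles of the prescribed lengths; by independence and centring a summand survives only if every edge of the superposition is used at least twice. A counting argument of Wigner/Sinai--Soshnikov type shows that, after multiplication by $\rho_n^{\sum_j k_jm_j}$, the only configurations that do not tend to $0$ are the perfect pairings of the $m_j$ cycles of length $k_j$ among themselves in which the two cycles of a pair run over the same simple $k_j$-cycle in opposite orientations and distinct pairs are vertex-disjoint: for a fixed pair there are $2k_j$ superposition patterns on a given cyclic arrangement and $\asymp n^{\underline{k_j}}/(2k_j)$ arrangements, so the pair contributes $\rho_n^{2k_j}\cdot 2k_j\,n^{\underline{k_j}}\,\sigma_n^{2k_j}=2k_j(1+o(1))$. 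Hence the moment tends to $\prod_j (m_j-1)!!\,(2k_j)^{m_j/2}$ when all $m_j$ are even and to $0$ otherwise, which is exactly $\E[\prod_j Z_j^{m_j}]$ for independent $Z_j\sim N(0,2k_j)$; dividing by $\sqrt{2k_j}$ gives $N_l(0,I_l)$. Two points make this rigorous. First, all non-pairing contributions — configurations mixing different lengths, configurations with an edge of multiplicity $\ge 3$, and pairings that are not vertex-disjoint — are bounded, using $\E|Y_{uv}|^{r}\le\sigma_n^2$ and the fact that a connected component with $v$ vertices and $E$ distinct edges has $v\le E$ (it contains a cycle), by $(n\sigma_n^2)^{-\delta}\,\mathrm{poly}(k_l)$ per configuration, where $2\delta\ge 1$ counts the excess edge-slots. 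Second, the number of such configuration types is at most $C^{k_l}$ for an absolute constant $C$ (up to polynomial factors), since $M$ is fixed. As $n\sigma_n^2=n\hat{p}_n(1-\hat{p}_n)\to\infty$ and $k_l=o(\log(\hat{p}_nn))$, the total error is $o(1)$; this is where that hypothesis enters. (Requiring the indices inside one cycle to be distinct discards only $O(k_l^2/n)$ of the tuples.)

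\noindent\textbf{Part ii).} Condition on the labels $\sigma$. Given $\sigma$, the edges are independent and $Y_{uv}=\sigma_u\sigma_v\delta_n+W_{uv}$ with $\delta_n=(p_n-q_n)/2$ and $W_{uv}$ centred of variance $p_n(1-p_n)$ or $q_n(1-q_n)$ according as $\sigma_u\sigma_v=\pm1$; since $c=\Theta(1)$ and $np_n\to\infty$ we have $(p_n-q_n)/(p_n+q_n)\to 0$, so both variances equal $\sigma_n^2(1+o(1))$ uniformly and $\delta_n^2/\sigma_n^2\to0$. Expand $\prod_s(\sigma_{i_s}\sigma_{i_{s+1}}\delta_n+W_{i_si_{s+1}})$ over a $k$-cycle according to which factors are ``deterministic'' ($\sigma\sigma\delta_n$) and which ``random'' ($W$). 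The all-deterministic term telescopes, $\prod_s\sigma_{i_s}\sigma_{i_{s+1}}=(\prod_s\sigma_{i_s})^2=1$, so it equals $\delta_n^{k}$, and, summed over distinct tuples and multiplied by $\rho_n^k$, contributes $(n\rho_n\delta_n)^{k}\,n^{\underline{k}}/n^{k}=\mu_k(1+O(k^2/n))$, using the identity $(n\rho_n\delta_n)^2=\frac{n(p_n-q_n)^2}{4\hat{p}_n(1-\hat{p}_n)}=\frac{c}{2(1-\hat{p}_n)}$, i.e.\ $(n\rho_n\delta_n)^{k}=\mu_k$. Any term with a random factor has conditional mean $0$ given $\sigma$, so $\E[C_{n,k}\mid\sigma]$ is the deterministic quantity $\mu_k(1+O(k^2/n))$, whence $\E[C_{n,k}]=\mu_k+o(1)$ because $k_l=o(\sqrt{\log n})$ forces $\mu_{k_l}=n^{o(1)}$ and $k_l^2=o(\log n)$. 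After subtracting $\mu_k$ the dominant surviving piece of $C_{n,k}-\mu_k$ is the all-$W$ term, to which the computation of Part i) applies verbatim (with $Y$ replaced by $W$ and $\sigma_n^2$ by the $W$-variances, the discrepancy absorbed into $1+o(1)$), giving again independent $N(0,2k_j)$ limits; dividing by $\sqrt{2k_j}$ gives \eqref{dist:alt}.

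The main obstacle is the error control in Part ii): one must show that the mixed terms — those with between one and $k-1$ deterministic factors — contribute $o(1)$ not merely to the mean (immediate) but to every joint moment of the $C_{n,k_j}-\mu_j$. These produce sums over pairs of $k$-cycles in which the $W$-edges must still pair up while the sign-valued $\delta_n$-edges and the extra vertices are comparatively unconstrained, so the bookkeeping is heavier than in Part i); one checks that each $\delta_n$-factor carries an extra $n^{-1/2}$ whereas the total deterministic weight $\mu_k$ may be as large as $n^{o(1)}$, and that the resulting error is dominated by a quantity of the form $\exp(O(k^2))\,(n\sigma_n^2)^{-1}$, which is $o(1)$ exactly when $k_l=o(\sqrt{\log n})$; together with the bound $k_l=o(\log(\hat{p}_nn))$ inherited from Part i) this accounts for the hypothesis $k_l=o(\min(\log(\hat{p}_nn),\sqrt{\log n}))$. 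Everything else — vanishing of odd moments, factorisation across distinct cycle-lengths, the passage from convergence of all joint moments to convergence in distribution (the Gaussian being determined by its moments), and re-inserting the average over $\sigma$ (legitimate since the conditional limit does not depend on $\sigma$) — is routine.
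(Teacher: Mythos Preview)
Your approach is essentially the paper's: method of moments for Part~i), and for Part~ii) the decomposition $Y_{uv}=\sigma_u\sigma_v\delta_n+W_{uv}$ with the all-deterministic piece yielding $\mu_k$ via the telescoping identity $\prod_s\sigma_{i_s}\sigma_{i_{s+1}}=1$, the all-random piece $D_{n,k}$ handled as in Part~i), and the mixed terms $V_{n,k}$ shown to be negligible.

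There is one substantive difference worth noting. In Part~ii) you propose to control the mixed terms inside \emph{every joint moment} of the $C_{n,k_j}-\mu_j$, which forces you to track products of $V$-pieces across several cycle lengths simultaneously; this is doable but, as you yourself flag, the bookkeeping is heavy. The paper instead proves directly that $\Var(V_{n,k})\to 0$ (by pairing two words $w,x$ and estimating $\cov(V_{n,k,w},V_{n,k,x})$ according to the set $E_w\cap E_x$), so that $V_{n,k}=o_p(1)$ and Slutsky reduces Part~ii) to the CLT for $D_{n,k}$ already established. This shortcut avoids re-running the full moment computation with mixed terms present. The resulting bound in the paper is $\Var(V_{n,k})\le (2C)^k k^{k}/n$, i.e.\ $\exp(O(k\log k))/n$ rather than your $\exp(O(k^2))/(n\sigma_n^2)$; both are $o(1)$ under $k=o(\sqrt{\log n})$, but the former explains more transparently where that hypothesis comes from (the factor $k^{k}$ arises from placing the shared-edge segments of $w$ inside $x$).

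A second, more minor point: in Part~i) you appeal to ``a counting argument of Wigner/Sinai--Soshnikov type'' and assert the number of bad configurations is $O(C^{k_l})$ up to polynomials. The paper makes this precise via a lemma (its Lemma~4.4, proved in the appendix using the Anderson--Zeitouni machinery of weak CLT sentences and an embedding into single closed words) giving the bound
\[
\#\mathcal A\le 2^{\sum_i l_i}\Big(C_1\textstyle\sum_i l_i\Big)^{C_2m}\Big(\textstyle\sum_i l_i\Big)^{3(\sum_i l_i-2t)}n^{t},
\]
which is exactly what is needed to make your geometric-series estimate rigorous. Your sketch is correct in outline, but this lemma is where the real work in Part~i) lies.
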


\noindent
The proof of Proposition \ref{prop:signdistr} is inspired from the remarkable paper by Anderson and Zeitouni \cite{AZ05}. However, the model in this case is simpler which makes the proof less cumbersome. The fundamental idea is to prove that the signed cycles converges in distribution by using the method of moments and the limiting random variables satisfy the Wick's formula. At first we state the method of moments.
\begin{lemma}\label{lem:mom}
Let $Y_{n,1},\ldots, Y_{n,l}$ be a random vector of $l$ dimension. Then $(Y_{n,1},\ldots, Y_{n,l}) \stackrel{d}{\to} (Z_1,\ldots,Z_{l})$ if the following conditions are satisfied:
\begin{enumerate}[i)]
\item 
\begin{equation}\label{eqn:momcond}
\lim_{n \to \infty}\E[X_{n,1}\ldots X_{n,m}] 
\end{equation}
exists for any fixed $m$ and $X_{n,i} \in \{ Y_{n,1},\ldots,Y_{n,l} \}$ for $1\le i \le m$.
\item(Carleman's Condition)\cite{Carl26}
\[
\sum_{h=1}^{\infty} \left(\lim_{n \to \infty}\E[X_{n,i}^{2h}]\right)^{-\frac{1}{2h}} =\infty ~~ \forall ~ 1\le i \le l.
\]
\end{enumerate} 
Further,  
\[
\lim_{n \to \infty}\E[X_{n,1}\ldots X_{n,m}]= \E[X_{1}\ldots X_{m}].
\]
Here $X_{n,i} \in \{ Y_{n,1},\ldots,Y_{n,l} \}$ for $1\le i \le m$ and $X_{i}$ is the in distribution limit of $X_{n,i}$.
\end{lemma}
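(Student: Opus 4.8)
The plan is to run the standard subsequence argument for the multidimensional method of moments: condition~$i)$ produces the candidate limiting (mixed) moments, and condition~$ii)$ is used only to guarantee that they determine a unique law.

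First I would record tightness and set up the subsequence scheme. Applying condition~$i)$ with $X_{n,1}=\cdots=X_{n,m}=Y_{n,i}$ shows that $\E[Y_{n,i}^{r}]$ converges for every fixed $r$; with $r=2$ this gives $\sup_n\E[Y_{n,i}^{2}]<\infty$, so each coordinate sequence, and hence the sequence of $\mathbb R^{l}$-valued vectors $(Y_{n,1},\dots,Y_{n,l})$, is tight. By Prokhorov's theorem any subsequence has a further subsequence $\{n_k\}$ along which $(Y_{n_k,1},\dots,Y_{n_k,l})\stackrel{d}{\to}(W_1,\dots,W_l)$. The identification step is that, for any $j_1,\dots,j_m$, the products $\prod_{s=1}^{m}Y_{n,j_s}$ are uniformly integrable in $n$: by the generalized H\"older inequality $\E\big[\,\big|\prod_s Y_{n,j_s}\big|^{1+\eta}\,\big]\le\prod_s\big(\E[Y_{n,j_s}^{m(1+\eta)}]\big)^{1/m}$, which is bounded by condition~$i)$. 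Since $\prod_s Y_{n_k,j_s}\stackrel{d}{\to}\prod_s W_{j_s}$ by continuity, Vitali's theorem gives $\E[\prod_s W_{j_s}]<\infty$ and $\E[\prod_s Y_{n_k,j_s}]\to\E[\prod_s W_{j_s}]$; comparing with the full limit from condition~$i)$ yields $\E[\prod_s W_{j_s}]=\lim_{n}\E[\prod_s Y_{n,j_s}]$, independent of the subsequence, and in particular $\E[W_j^{2h}]=\lim_n\E[Y_{n,j}^{2h}]$.

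It remains to show that this common family of mixed moments determines the law of $(W_1,\dots,W_l)$, so that all subsequential limits agree; one then sets $(Z_1,\dots,Z_l)$ to be that law, and since every subsequence has a sub-subsequence converging to it, $(Y_{n,1},\dots,Y_{n,l})\stackrel{d}{\to}(Z_1,\dots,Z_l)$, with the moment convergence (the ``Further'' clause) already established above. I would obtain the determinacy from condition~$ii)$ via the Cram\'er--Wold device: for fixed $t\in\mathbb R^{l}$, the moments of $S_n:=\sum_j t_j Y_{n,j}$ are fixed polynomials in the mixed moments, hence converge, and by the same uniform integrability any subsequential limit $S=\sum_j t_jW_j$ has $\E[S^{r}]$ equal to that limit. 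Minkowski's inequality gives $\big(\E S^{2h}\big)^{1/(2h)}\le l\big(\max_j|t_j|\big)\max_j\big(\E W_j^{2h}\big)^{1/(2h)}$, so $\big(\E S^{2h}\big)^{-1/(2h)}\ge\big(l\max_j|t_j|\big)^{-1}\min_j\big(\E W_j^{2h}\big)^{-1/(2h)}$, and since by condition~$ii)$ each sequence $h\mapsto\big(\E W_j^{2h}\big)^{-1/(2h)}$ is non-increasing (Lyapunov) with divergent sum, the elementary fact that the pointwise minimum of finitely many non-increasing positive sequences with divergent sums still has divergent sum shows that $S$ satisfies Carleman's condition. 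Hence, by the one-dimensional method of moments, $S_n$ converges in distribution to the unique law with those moments, so the law of $\sum_j t_j W_j$ does not depend on the subsequence, and Cram\'er--Wold pins down the joint law of $(W_1,\dots,W_l)$. The main obstacle is precisely this determinacy step — passing from Carleman's condition on the coordinates to determinacy of the joint law (equivalently, to Carleman's condition for every linear combination, which relies on the elementary lemma above); everything else is routine bookkeeping with tightness, Prokhorov, and uniform integrability. Alternatively one could quote the multivariate Carleman criterion directly in place of the Cram\'er--Wold reduction.
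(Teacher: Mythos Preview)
The paper does not prove this lemma at all; it simply states ``The method of moments is very well known and much useful in probability theory. We omit its proof.'' Your writeup therefore supplies an argument where the paper gives none, and the overall architecture --- tightness, Prokhorov, uniform integrability to pass moments to subsequential limits, then a determinacy argument to pin down the limit --- is exactly the standard one and is correct in outline.

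There is, however, a real gap in your determinacy step. The ``elementary fact that the pointwise minimum of finitely many non-increasing positive sequences with divergent sums still has divergent sum'' is \emph{false}. A counterexample can be built as follows: set $a_h=\alpha_k/2^k$ and $c_h=\gamma_k/2^k$ for $h\in[2^k,2^{k+1})$, where the $\alpha_k,\gamma_k$ are positive sequences with $\alpha_{k+1}\le 2\alpha_k$ and $\gamma_{k+1}\le 2\gamma_k$ (which is exactly what is needed for $a,c$ to be non-increasing), and one has $\sum_h a_h=\sum_k\alpha_k$, $\sum_h c_h=\sum_k\gamma_k$, $\sum_h\min(a_h,c_h)=\sum_k\min(\alpha_k,\gamma_k)$. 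Now arrange $\alpha$ and $\gamma$ in alternating phases of lengths $L_j=2^{j-1}$: on odd phases $\alpha$ doubles at each step while $\gamma$ sits at a tiny constant $\delta_j$, and on even phases the roles swap, $\alpha$ sitting at $\epsilon_j$; taking $\epsilon_{2i}=2^{-2^{2i}}$ and $\delta_{2i+1}=2^{-2^{2i+1}}$ makes each doubling phase contribute a bounded-below amount ($\approx 2$) to the relevant series, so $\sum\alpha_k=\sum\gamma_k=\infty$, while $\sum_k\min(\alpha_k,\gamma_k)\le\sum_{j\ge1}L_j\cdot 2^{-2^{j}}=\sum_{j\ge1}2^{\,j-1-2^{j}}<\infty$. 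Hence your Cram\'er--Wold reduction, as written, does not establish Carleman's condition for all linear combinations.

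Your own escape hatch fixes this: the multivariate Carleman criterion (Carleman on each coordinate $\Rightarrow$ joint determinacy) is a genuine theorem, and quoting it directly replaces the faulty step cleanly. Note that this is strictly stronger than ``each marginal is determinate $\Rightarrow$ joint is determinate,'' which is itself false; it is the Carleman divergence on each coordinate that does the work. With that substitution, your proof goes through, and it is the standard proof the paper is implicitly invoking.
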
 
The method of moments is very well known and much useful in probability theory. We omit its proof.

\noindent
Now we stat the Wick's formula for Gaussian random variables which was first proved by Isserlis(1918)\cite{I18} and later on introduced by Wick\cite{W50} in the physics literature in 1950. 
\begin{lemma}(Wick's formula)\label{lem:wick}\cite{W50}
Let $(Y_1,\ldots, Y_{l})$ be a multivariate mean $0$ random vector of dimension $l$ with covariance matrix $\Sigma$(possibly singular). Then $((Y_1,\ldots, Y_{l}))$ is jointly Gaussian if and only if for any integer $m$ and $X_{i} \in \{ Y_1,\ldots,Y_{l} \}$ for $1\le i \le m$
\begin{equation}\label{eqn:wick}
\E[X_1\ldots X_{m}]=\left\{
\begin{array}{ll}
  \sum_{\eta} \prod_{i=1}^{\frac{m}{2}} \E[X_{\eta(i,1)}X_{\eta(i,2)}] & ~ \text{for $m$ even}\\
  0 & \text{for $m$ odd.}
\end{array}
\right.
\end{equation}
Here $\eta$ is a partition of $\{1,\ldots,m \}$ into $\frac{m}{2}$ blocks such that each block contains exactly $2$ elements and $\eta(i,j)$ denotes the $j$ th element of the $i$ th block of $\eta$ for $j=1,2$.
\end{lemma}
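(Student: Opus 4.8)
The plan is to prove the two implications separately. For the ``only if'' direction (joint Gaussianity implies the moment identity \eqref{eqn:wick}), I would compute all mixed moments by differentiating the moment generating function. For the ``if'' direction (the moment identity implies joint Gaussianity), I would observe that \eqref{eqn:wick} pins down every joint moment of $(Y_1,\ldots,Y_l)$ in terms of $\Sigma$, that these are precisely the moments of the (possibly degenerate) Gaussian law $N(0,\Sigma)$, and then invoke Lemma \ref{lem:mom} together with Carleman's condition to conclude that $(Y_1,\ldots,Y_l)$ has that law.

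For the forward direction, fix $m$ and $X_1,\ldots,X_m \in \{Y_1,\ldots,Y_l\}$ (repetitions allowed). Since each $X_i$ is a coordinate of a jointly Gaussian vector, the linear form $\sum_{i=1}^m s_i X_i$ is a centered Gaussian with variance $\sigma(s)^2 = \sum_{1\le i,i'\le m} s_i s_{i'}\,\E[X_i X_{i'}]$, hence $\E\big[\exp(\sum_i s_i X_i)\big] = \exp\big(\tfrac12\sigma(s)^2\big)$ for all $s\in\R^m$. Differentiating once in each variable and evaluating at $s=0$ gives $\E[X_1\cdots X_m] = \partial_{s_1}\cdots\partial_{s_m}\big|_{s=0}\exp\big(\tfrac12\sigma(s)^2\big)$. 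Expanding $\exp(\tfrac12\sigma(s)^2) = \sum_{k\ge 0}\frac{1}{k!\,2^k}\big(\sum_{i,i'} s_i s_{i'}\,\E[X_iX_{i'}]\big)^k$, the only contribution surviving the mixed derivative at $0$ comes from the monomial $s_1\cdots s_m$, which forces $m=2k$ and produces exactly one summand for each way of grouping $\{1,\ldots,m\}$ into $m/2$ unordered pairs, each contributing $\prod_i \E[X_{\eta(i,1)}X_{\eta(i,2)}]$; for $m$ odd every term vanishes. This is \eqref{eqn:wick}, and the singularity of $\Sigma$ plays no role, since the argument only uses that linear combinations are Gaussian.

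For the converse, apply the forward direction to a genuine Gaussian vector $Z \sim N(0,\Sigma)$ to see that its mixed moments are given by the right-hand side of \eqref{eqn:wick}; hence the hypothesis says that $(Y_1,\ldots,Y_l)$ and $Z$ have identical joint moments of all orders. The one-dimensional marginal $N(0,\Sigma_{jj})$ has $2h$-th moment $(2h-1)!!\,\Sigma_{jj}^h$, and $\sum_{h\ge 1}\big((2h-1)!!\,\Sigma_{jj}^h\big)^{-1/(2h)} = \infty$, so Carleman's condition holds for each coordinate. Applying Lemma \ref{lem:mom} to the constant sequence $(Y_{n,1},\ldots,Y_{n,l}) := (Y_1,\ldots,Y_l)$, whose moments trivially converge (to the moments of $N(0,\Sigma)$), yields $(Y_1,\ldots,Y_l) \stackrel{d}{=} N(0,\Sigma)$, i.e.\ the vector is jointly Gaussian with covariance $\Sigma$.

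The main obstacle is purely bookkeeping: carefully extracting the coefficient of $s_1\cdots s_m$ in the expansion of $\exp(\tfrac12\sigma(s)^2)$ and checking that it equals the sum over pair partitions $\eta$ in exactly the notation of the statement, while keeping track of the fact that the $X_i$ need not be distinct. One should also treat the trivial degenerate case $\Sigma_{jj}=0$ (where $Y_j=0$ almost surely) separately when checking Carleman's condition. Neither point presents a genuine difficulty.
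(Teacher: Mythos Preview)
Your argument is correct. The paper does not supply its own proof of this lemma: immediately after the statement it writes ``The proof of the aforesaid Lemma is omitted,'' treating Wick's formula as a classical fact and citing Isserlis and Wick. So there is nothing to compare against; your MGF-differentiation proof of the forward implication and your moment-method proof of the converse (via Lemma~\ref{lem:mom} and Carleman) are both standard and sound, and your remarks on the degenerate case $\Sigma_{jj}=0$ and on the bookkeeping of the $s_1\cdots s_m$ coefficient are accurate.
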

The proof of the aforesaid Lemma is omitted. However, it is good to note that the random variables $Y_1,\ldots, Y_{l}$ may also be the same. In particular, taking $Y_1=\cdots = Y_{l}$, Lemma \ref{lem:wick} also provides a description of the moments of Gaussian random variables.
\noindent
With Lemma \ref{lem:mom} and \ref{lem:wick} in hand, we now jump into the proof of Proposition \ref{prop:signdistr}.\\ 
\textbf{Proof of Proposition \ref{prop:signdistr}}\\
At first we introduce some notations and some terminologies. We denote an word $w$ to be an ordered sequence of integers (to be called letters) $(i_0,\ldots, i_{k-1},i_k)$ such that $i_0= i_k$ and all the numbers $i_j$  for $0\le j \le k-1$ are distinct. For a word $w=(i_0,\ldots, i_{k-1},i_k)$, its length $l(w)$ is $k+1$. The graph induced by an word $w$ is denoted by $G_w$ and defined as follows. One treats the letters $(i_0,\ldots,i_k)$ as nodes and put an edge between the nodes $(i_{j},i_{j+1})_{0\le j \le k-1}$. Note that for a word $w$ of length $k+1$, $G_w=(V_{w},E_{w})$ is just a $k$ cycle. For a word $w=(i_0,\ldots,i_{k})$ its mirror image is defined by $\tilde{w}= (i_0, i_{k-1},i_{k-2},\ldots, i_1,i_0)$. Further for a cyclic permutation $\tau$ of the set $\{0,1,\ldots,k-1 \}$, we define $w^{\tau}:=(i_{\tau(0)},\ldots, i_{\tau(k-1)},i_{\tau(0)})$. Finally two words $w$ and $x$ are  called paired if there is a cyclic permutation $\tau$ such that either $x^{\tau}=w$ or $\tilde{x}^{\tau}=w$.
 An ordered tuple of $m$ words, $(w_1,\ldots,w_m)$ will be called a sentence. For any sentence $a= (w_1,\ldots w_m)$, $G_{a}=(V_a,E_a)$ is the graph with $V_a=\cup_{i=1}^{m}V_{w_i}$ and $E_{a}= \cup_{i=1}^{m} E_{w_i}$.

\noindent  
\textbf{Proof of part i)}
We complete the proof of this part in two steps. In the first step the asymptotic variances of $(C_{n,k_1}(G),\ldots, C_{n,k_l}(G))$ will be calculated and the second step will be dedicated towards proving the asymptotic normality and independence of $(C_{n,k_1}(G),\ldots, C_{n,k_l}(G))$   .  

\noindent 
\textbf{Step 1:} Observe that when $G \sim \mathbb{P}_n'$ the distribution of $C_{n,k_1}(G),\ldots, C_{n,k_l}(G)$ is trivially independent of the labels $\sigma_i$ and $\E[C_{n,k}(G)]=0$ for any $k$. Since $\mathbb{P}_n'$ corresponds to the probability distribution induced by an Erd{\"o}s-R{\'e}nyi model. Now we prove that $\mathrm{Var}(C_{n,k}(G)) \sim 2k$ for any $k=o(\sqrt{n})$. Let for any word $w=(i_0,\ldots,i_k)$, $X_{w}:= \prod_{j=0}^{k-1}\left(x_{i_j,i_{j+1}}- \hat{p}_n\right).$ Now observe that 
\begin{equation}\label{eqn:varexpI}
\begin{split}
\mathrm{Var}(C_{n,k}) &= \left(\frac{1}{n\hat{p}_n(1-\hat{p}_n)}\right)^{k}\E\left[(\sum_{w} X_{w})^2\right]\\
&= \left(\frac{1}{n\hat{p}_n(1-\hat{p}_n)}\right)^{k} \E \left[ \sum_{w,x}X_{w}X_{x} \right].
\end{split}
\end{equation}
Since both $X_{w}$ and $X_x$ are product of independent mean $0$ random variables each coming exactly once, $\E[X_{w}X_{x}] \ne 0$ if and only if all the edges in $G_{w}$ are repeated in $G_{x}$. Observe that since $G_{w}$ and $G_{x}$ are cycles of length $k$, this is satisfied if and only if $w$ and $x$ are paired. There are $k$ many cyclic permutations $\tau$ of the set $\{0,\ldots,k-1 \}$ and for a given $w$ and $\tau$, there are only two possible choices of $x$ such that $w$ and $x$ are paired. These choices are obtained when $x^{\tau}=w$ and $\tilde{x}^{\tau}=w$. As a consequence for any word $w$, exactly $2k$ words are paired with it. Now observe that when $w$ and $x$ are paired,  $X_wX_x$ is a product of $k$ random variables each appearing exactly twice. As a consequence, $\E[X_{w}X_{x}]= \left(\hat{p}_n(1-\hat{p}_n)\right)^{k}.$ Also the total number of words is given by $n(n-1)\ldots (n-k+1)$ for the choices of $i_0,\ldots, i_{k-1}$. It is well known that 
\[
\frac{n(n-1)\ldots (n-k+1)}{n^{k}} \to 1
\]
whenever $k=o(\sqrt{n})$.
So 
\begin{equation}\label{eqn:varexpII}
\mathrm{Var}(C_{n,k})=  2k\left(\frac{1}{n\hat{p}_n(1-\hat{p}_n)}\right)^{k}n(n-1)\ldots (n-k+1)\left(\hat{p}_n(1-\hat{p}_n)\right)^{k} \sim 2k
\end{equation}
as long as $k=o(\sqrt{n}).$ This completes \textbf{Step 1} of the proof.

\noindent 
\textbf{Step 2:}  Now we claim that in order to complete \textbf{Step 2}, is enough to prove the following two limits.
\begin{equation}\label{eqn:lim1}
\lim_{n \to \infty} \E\left[C_{n,k_1}(G)C_{n,k_2}(G) \right] \to 0
\end{equation}
whenever $k_1 \ne k_2$ and there exists random variables $Z_1,\ldots,Z_l$ such that for any fixed $m$
\begin{equation}\label{eqn:lim2}
\lim_{n \to \infty}\E[X_{n,1}\ldots X_{n,m}]\to\left\{
\begin{array}{ll}
  \sum_{\eta} \prod_{i=1}^{\frac{m}{2}} \E[Z_{\eta(i,1)}Z_{\eta(i,2)}] & ~ \text{for $m$ even}\\
  0 & \text{for $m$ odd.}
\end{array}
\right.
\end{equation}
where $X_{n,i} \in \{ \frac{C_{n,k_1}(G)}{\sqrt{2k_1}},\ldots, \frac{C_{n,k_l}(G)}{\sqrt{2k_l}}\}$.

First observe that (\ref{eqn:lim2}) will simultaneously imply part $i)$ and $ii)$ of Lemma \ref{lem:mom}. Implication of $i)$ is obvious. However, for $ii)$ one can take $X_{n,i}$'s to be all equal and from Wick's formula (Lemma \ref{lem:wick}) the limiting distribution of $X_{n,i}$'s are normal. It is well known that normal random variables satisfy Carleman's condition. On the other hand (\ref{eqn:lim2}) also implies that the limit of $(\frac{C_{n,k_1}(G)}{\sqrt{2k_1}},\ldots, \frac{C_{n,k_l}(G)}{\sqrt{2k_l}})$ is jointly normal. Hence applying (\ref{eqn:lim1}), one gets the asymptotic independence. 

We first prove (\ref{eqn:lim1}). Observe that 
\[
\E\left[C_{n,k_1}(G)C_{n,k_2}(G) \right] = \left(\frac{1}{n\hat{p}_n(1-\hat{p}_n)}\right)^{\frac{k_1+k_2}{2}} \E \left[ \sum_{w,x}X_{w}X_{x} \right].
\]
However, here $l(w)=k_1+1$ and $l(x)=k_2+1$. So $\E \left[ \sum_{w,x}X_{w}X_{x} \right]=0$. As a consequence, (\ref{eqn:lim1}) holds.

Now we prove (\ref{eqn:lim2}). Let $l_i$ be the length of the signed cycle corresponding to $X_{n,i}$. Observe that $l_{i} \in \{k_1,\ldots, k_l\}$ for any $i$. At first we expand the L.S. of (\ref{eqn:lim2}).
\begin{equation}\label{eqn:break}
\E[X_{n,1}\ldots X_{n,m}]=\left(\frac{1}{n\hat{p}_n(1-\hat{p}_n)}\right)^{\frac{\sum_{i}l_i}{2}} \sum_{w_1,\ldots,w_m} E\left[X_{w_1}\ldots X_{w_m}\right].
\end{equation}
Here each of the graphs $G_{w_1},\ldots,G_{w_m}$ are cycles of length $l_1,\ldots,l_m$ respectively. So in order to have $ E\left[X_{w_1}\ldots X_{w_m}\right] \neq 0$, we need each of the edges in $G_{w_1},\ldots, G_{w_m}$ to be traversed more than once. The sentence $a:=(w_1,\ldots, w_m)$, formed by such $(w_1,\ldots,w_m)$ will be called a weak CLT sentence. Given a weak CLT sentence $a$, we introduce a partition $\eta(a)$, of $\{1,\ldots,m \}$ in the following way. If $i,j$ are in same block of the partition $\eta(a)$, then $G_{w_i}$ $G_{w_j}$ have at least one edge in common. 

As a consequence, we can further expand the L.S. of (\ref{eqn:break}) in the following way.
\begin{equation}\label{eqn:breakII}
\left(\frac{1}{n\hat{p}_n(1-\hat{p}_n)}\right)^{\frac{\sum_{i}l_i}{2}}\sum_{\eta}\sum_{w_1,\ldots,w_m~|~ \eta = \eta(w_1,\ldots,w_m)}E\left[X_{w_1}\ldots X_{w_m}\right]
\end{equation}

Observe that each block in $\eta$ should have at least $2$ elements. Otherwise, in this case $E\left[X_{w_1}\ldots X_{w_m}\right]=0$. As a consequence, the number of blocks in $\eta \le [\frac{m}{2}]$.

\noindent 
Now we prove that if the number of blocks in $\eta < [\frac{m}{2}]$, then 
\[
\left(\frac{1}{n\hat{p}_n(1-\hat{p}_n)}\right)^{\frac{\sum_{i}l_i}{2}}\sum_{\eta}\sum_{w_1,\ldots,w_m~|~ \eta = \eta(w_1,\ldots,w_m)}E\left[X_{w_1}\ldots X_{w_m}\right] \to 0.
\]
If $\eta(w_1,\ldots,w_m)$ have strictly less than $[\frac{m}{2}]$ blocks, then $a$ has strictly less than $[\frac{m}{2}]$ connected components. From Proposition 4.10 of Anderson and Zeitouni \cite{AZ05} it follows that in this case $\#V_{a}\le \sum_{i=1}^{m}\frac{l_i}{2}-1.$ However each connected component is formed by an union of several cycles so $V_a\le E_a$. Now the following lemma gives a bound on the number of weak CLT sentences having strictly less than $[\frac{m}{2}]$ connected components.
\begin{lemma}\label{lem:appendix}
Let $\mathcal{A}$ be the set of weak CLT sentences such that for each $a \in \mathcal{A}$, $\#V_{a}=t$. Then 
\begin{equation}\label{eqn:bddweakclt}
\# \mathcal{A} \le 2^{\sum_{i} l_i}\left(C_1\sum_{i}l_i\right)^{C_2m}\left(\sum_{i}l_i\right)^{3(\sum_{i}l_i-2t)}n^{t}.
\end{equation}
\end{lemma}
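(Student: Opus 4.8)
The plan is to count $\mathcal A$ by a step-by-step encoding of its elements, in the spirit of the word‑counting arguments of Anderson and Zeitouni \cite{AZ05}. Write $L=\sum_i l_i$. The first reduction separates the combinatorial ``shape'' of a sentence from its vertex labels: relabelling the vertices of a sentence $a$ by $1,2,\dots,\#V_a$ in their order of first appearance (reading $w_1$, then $w_2$, and so on) produces a \emph{skeleton}, and $a$ is recovered from its skeleton together with an injection of $\{1,\dots,t\}$ into $[n]$. There are at most $n(n-1)\cdots(n-t+1)\le n^{t}$ such injections, so it suffices to bound the number $S(t)$ of skeletons on exactly $t$ vertices with the prescribed word lengths $l_1,\dots,l_m$, and to prove $S(t)\le 2^{L}(C_1L)^{C_2m}L^{3(L-2t)}$.

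To bound $S(t)$ I would build a skeleton one letter at a time, maintaining the sets of vertices and edges discovered so far and recording the least information needed to replay the construction. At the start of each of the $m$ words record whether the initial letter is a newly discovered vertex (one possibility, the next label) or an already discovered one (at most $t$ possibilities); this contributes a factor at most $(t+1)^{m}$, which is absorbed into $(C_1L)^{C_2m}$. Each of the $L$ edge‑steps is then of one of three kinds: (T) the first traversal of its edge, leading to a new vertex; (B) the first traversal of its edge, leading to an already discovered vertex; or (R) a repeated traversal of an already discovered edge. Since each vertex is discovered exactly once, either at a word start or at a (T)-step, there are exactly $t-s$ steps of type (T), where $s\le m$ is the number of words whose start is new; since each edge is discovered exactly once, at a (T)- or a (B)-step, and $G_a$ has $e:=\#E_a$ edges, there are $e-t+s$ steps of type (B), and the remaining $L-e$ steps are of type (R). The hypothesis that every edge of $G_a$ is traversed at least twice gives $L\ge 2e$, hence $e-t+s\le\tfrac12(L-2t)+m$ and likewise the first Betti number $e-(t-\kappa)\le\tfrac12(L-2t)+m$, where $\kappa\le m$ is the number of connected components of $G_a$. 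A (T)-step carries no further data; a (B)-step needs at most $t$ choices to name its old endpoint, and there are at most $\tfrac12(L-2t)+m$ of them; so recording which first-traversals are of type (B) together with all (B)-data costs at most $L^{(L-2t)/2+m}\,t^{(L-2t)/2+m}\le (C_1L)^{C_2m}L^{3(L-2t)}$ in the range $L\ge 2t$, the range $L<2t$ being handled by enlarging the constants, since there $L-2t\ge -2m$ and the skeletons are tree‑like.

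The only delicate point is the encoding of the $L-e\ge L/2$ steps of type (R): these are far too numerous to afford a factor $t$ apiece. Here I would use the combinatorial device of \cite{AZ05}: the traversals of a fixed edge are matched into consecutive pairs, an (R)-step that closes such a pair is determined by the position of its partner and by the vertex currently occupied, so it only costs a bounded ``parenthesization''-type choice; the only genuinely free (R)-steps are those forced by the excess of $G_a$ over a spanning forest, and their number is controlled by the Betti number, hence by $\tfrac12(L-2t)+m$. Recording the nesting pattern along each word is a balanced-bracket count, bounded by $2^{l_i}$ per word and so by $\prod_i 2^{l_i}=2^{L}$ overall, while the genuinely free (R)-choices contribute at most a further $(C_1L)^{C_2(L-2t)+C_2m}$. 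Multiplying the four contributions — $n^{t}$ for the labels, $2^{L}$ for the nesting/shape pattern, $(C_1L)^{C_2m}$ for the word starts and the component bookkeeping, and $L^{3(L-2t)}$ for the type sequence, the (B)-data and the genuinely free (R)-steps — yields (\ref{eqn:bddweakclt}). I expect the treatment of the (R)-steps, i.e. making rigorous the claim that a matched repeated traversal carries only $O(1)$ bits of information while the number of unmatched ones is $O(L-2t)+O(m)$, to be the main obstacle; this is precisely where the sentence combinatorics of \cite{AZ05} (including the structural bound quoted as their Proposition 4.10 above) must be invoked.
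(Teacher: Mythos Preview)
Your proposal is a plausible strategy, but it takes a genuinely different route from the paper. You attempt a direct letter-by-letter encoding of the whole sentence, classifying each of the $L=\sum_i l_i$ steps as (T)/(B)/(R) and invoking the AZ05 parenthesization device to argue that most (R)-steps carry only $O(1)$ bits; as you correctly flag, making this last point rigorous at the sentence level is the real work, essentially reproving the single-word bound in a multi-word setting. The paper instead \emph{reduces} to the single-word case: it fixes the partition $\eta(a)$ of $\{1,\dots,m\}$ into blocks of words sharing edges, and within each block $B_i$ runs an explicit splicing algorithm that concatenates the words into one closed word $W_i$ in which every edge is still traversed at least twice. The number of tuples $(W_1,\dots,W_{\mathcal C})$ is then bounded by the known single-word estimate (Lemma~2.1.23 of \cite{AGZ}, $\#\mathcal W_{k,t}\le 2^k k^{3(k-2t+2)}$) applied to each $W_i$, and the proof is completed by bounding the preimage of the splicing map --- reconstructing $a$ from $(W_1,\dots,W_{\mathcal C})$ costs only the choice of $\eta$, an ordering within each block, and three cut positions per splice, i.e.\ a factor $(C\sum_i l_i)^{Cm}$. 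The paper's approach is thus modular: it outsources precisely the delicate (R)-step analysis you identify as the obstacle to an already-proved black box, at the price of the extra concatenation/preimage bookkeeping. Your approach is more self-contained and would in principle yield slightly sharper constants, but requires redoing the AZ05 combinatorics from scratch.
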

The proof of Lemma \ref{lem:appendix} is rather technical and requires some amount of random matrix theory. So we defer its proof to the appendix. However, assuming Lemma \ref{lem:appendix}, we have
\begin{equation}\label{eqn:bddweakcltII}
\begin{split}
&\left(\frac{1}{n\hat{p}_n(1-\hat{p}_n)}\right)^{\frac{\sum_{i}l_i}{2}}\sum_{a~:~ V_{a}\le \sum_{i=1}^{m}\frac{l_i}{2}-1} E\left[X_{w_1}\ldots X_{w_m}\right]\\
&\le \left(\frac{1}{n\hat{p}_n(1-\hat{p}_n)}\right)^{\frac{\sum_{i}l_i}{2}} \sum_{t=1}^{\sum_{i=1}^{m}\frac{l_i}{2}-1}\sum_{e = t}^{\sum_{i}\frac{l_i}{2}}2^{\sum_{i} l_i}\left(C_1\sum_{i}l_i\right)^{C_2m}\left(\sum_{i}l_i\right)^{3(\sum_{i}l_i-2t)}n^{t}\hat{p}_n^e.\\
&\le \left(\frac{1}{n\hat{p}_n(1-\hat{p}_n)}\right)^{\frac{\sum_{i}l_i}{2}} \sum_{t=1}^{\sum_{i=1}^{m}\frac{l_i}{2}-1}2^{\sum_{i} l_i}\left(C_1'\sum_{i}l_i\right)^{C_2'm}\left(\sum_{i}l_i\right)^{3(\sum_{i}l_i-2t)}n^{t}\hat{p}_n^t.\\
&\le \left(\frac{2}{\sqrt{(1-\hat{p}_n)}}\right)^{\sum_{i} l_i}\underbrace{\sum_{t=1}^{\sum_{i=1}^{m}\frac{l_i}{2}-1}\left( \frac{C_3(\sum_{i}l_i)^{C_4}}{n\hat{p}_n} \right)^{\sum_{i}\frac{l_i}{2}-t}}_{T_1(\text(say))}.
\end{split}
\end{equation}
where $C_3$ and $C_4$ are some known constants. The third in equality holds due to the following reason. As $\sum_{i=1}^{m}\frac{l_i}{2}-t \ge 1$,
\[
\left(C_1'\sum_{i}l_i\right)^{C_2'm}\left(\sum_{i}l_i\right)^{3(\sum_{i}l_i-2t)}=\left(C_1'(\sum_{i}l_i)^{\frac{C_2'm}{3(\sum_{i}l_i-2t)}+1}\right)^{3(\sum_{i}l_i-2t)}\le \left(C_1'(\sum_{i}l_i)^{\frac{C_2'm}{6}+1}\right)^{3(\sum_{i}l_i-2t)} .
\]

Observe that $T_1$ is just a geometric series. Further, lowest value of $\sum_{i=1}^{m}\frac{l_i}{2}-t$ is $1$. So we can give the following final bound to (\ref{eqn:bddweakcltII}),
\begin{equation}\label{eqn:bddweakcltIII}
\left(\frac{2}{\sqrt{(1-\hat{p}_n)}}\right)^{\sum_{i} l_i} C_5 \frac{C_3(\sum_{i}l_i)^{C_4}}{n\hat{p}_n}.
\end{equation}
where $C_5$ is another known constant.  When $k_l=o(\log(\hat{p}_n n))$ and $\sum_{i}l_i\le mk_l$   
\[
\left(\frac{2}{\sqrt{(1-\hat{p}_n)}}\right)^{mk_l}C_5\frac{C_3(mk_l)^{C_4}}{n\hat{p}_n}\to 0.
\]
Once this is proved all the other partitions left are pair partitions i.e. it has exactly $\frac{m}{2}$ many blocks. However, once such a partition $\eta$ is fixed then the choices within a block doesn't depend on the others. As a consequence, (\ref{eqn:wick}) is satisfied. This completes part i). \hfill $\square$

\noindent
\textbf{Proof of part ii)}
Let $d:= \frac{p_n-q_n}{2}$.
We have 
\begin{equation}
\begin{split}
&C_{n,k}(G)=\left(\frac{1}{n\hat{p}_n(1-\hat{p}_n)}\right)^{\frac{k}{2}} \sum_{i_0,i_1,\ldots,i_{k-1}} (x_{i_0,i_1}-\hat{p}_n)\ldots(x_{i_{k-1}i_0}-\hat{p}_n)\\
&= \left(\frac{1}{n\hat{p}_n(1-\hat{p}_n)}\right)^{\frac{k}{2}}\sum_{i_0,i_1,\ldots,i_{k-1}} (x_{i_0,i_1}-p_{i_0,i_1}+p_{i_0,i_1}-\hat{p}_n)\ldots(x_{i_{k-1}i_0}-p_{i_{k-1},i_{k}}+p_{i_{k-1},i_{k}}-\hat{p}_n)\\
&=\left(\frac{1}{n\hat{p}_n(1-\hat{p}_n)}\right)^{\frac{k}{2}}\sum_{i_0,i_1,\ldots,i_{k-1}}(x_{i_0,i_1}-p_{i_0,i_1}+{\sigma_{i_0}\sigma_{i_1}}d)\ldots(x_{i_{k-1}i_0}-p_{i_{k-1},i_{k}}+{\sigma_{i_{k-1}}\sigma_{i_k}}d)\\
&=\left(\frac{1}{n\hat{p}_n(1-\hat{p}_n)}\right)^{\frac{k}{2}}\left[\sum_{i_0,i_1,\ldots,i_{k-1}}(x_{i_0,i_1}-p_{i_0,i_1})\ldots (x_{i_{k-1}i_0}-p_{i_{k-1},i_{k}}) + d^{k}\prod_{j=0}^{k-1} {\sigma_{i_j}\sigma_{i_{j+1}}}\right]+ V_{n,k}.
\end{split}
\end{equation}
Here $p_{i,j}=p_n$ if $\sigma_{i}=\sigma_{j}$ and $q_n$ otherwise. 
\\
 At first we prove that 
\begin{equation}\label{eqn:prodassign}
\prod_{j=0}^{k-1} {\sigma_{i_j}\sigma_{i_{j+1}}}=1
\end{equation}
irrespective of the values of $\sigma_{i}$'s. To prove this, without loss of generality let us assume $\sigma_{i_0}=+1.$ We now look at the runs of $+1$'s and $-1$'s in $\sigma_{i_j}$'s. Since $i_0=i_k$, the value of $\sigma_{i_k}$ is also $1$. So the any such assignment of $\sigma$ start with a run of $+1$ and end with a run of $+1$. Also, the runs of $+1$'s and $-1$ alternate. Hence there is only even number of change of signs in the whole assignment. Now 
\[
\prod_{j=0}^{s} {\sigma_{i_j}\sigma_{i_{j+1}}} = - \prod_{j=0}^{s+1} {\sigma_{i_j}\sigma_{i_{j+1}}}
\]
if and only if $\sigma_{i_s}=-\sigma_{i_{s+1}}$. This completes the proof of (\ref{eqn:prodassign}). 

The proof of asymptotic normality and independence of
\[
D_{n,k}(G):=\left(\frac{1}{n\hat{p}_n(1-\hat{p}_n)}\right)^{\frac{k}{2}}\left[\sum_{i_0,i_1,\ldots,i_{k-1}}(x_{i_0,i_1}-p_{i_0,i_1})\ldots (x_{i_{k-1}i_0}-p_{i_{k-1},i_{k}})\right]
\]
is exactly same as part i). We only note that here the variance is also $2k$. To see this, at first observe that 
\[
d= \sqrt{\frac{c\hat{p}_n}{2n}}
\]
and whenever, $k=o(\log(\hat{p}_nn))$ both 
\begin{equation}\label{lim1}
\lim_{n \to \infty}\left(\frac{(\hat{p}_n+d)(1-\hat{p}_n-d)}{\hat{p}_n(1-\hat{p}_n)}\right)^{\frac{k}{2}} =1
\end{equation}
and 
\begin{equation}\label{lim2}
\lim_{n \to \infty}\left(\frac{(\hat{p}_n-d)(1-\hat{p}_n+d)}{\hat{p}_n(1-\hat{p}_n)}\right)^{\frac{k}{2}} =1.
\end{equation}
It is easy to see that $\mathrm{Var}\left(\frac{D_{n,k}(G)}{\sqrt{2k}}\right)$ lies between L.S. of ( \ref{lim1}) and (\ref{lim2}). As a consequence, $\mathrm{Var}\left(\frac{D_{n,k}(G)}{\sqrt{2k}}\right) \to 1$. 
\\
Now our final task is to prove $\mathrm{Var}(V_{n,k})\to 0$.
\\
Let us fix a word $w$ and let $E_{f}\subset E_{w}$ be any subset. Then 
\[
V_{n,k}= \sum_{w} V_{n,k,w}
\]
where 
\[
V_{n,k,w}:= \left(\frac{1}{n\hat{p}_n(1-\hat{p}_n)}\right)^{\frac{k}{2}}\sum_{E_{f}\subset E_{w}} \prod_{e \in E_{f}}\sigma_{e}d\prod_{e \in E \backslash E_{f}}(x_{e}-p_{e}).
\]
Here for any edge ${i,j}$, $x_{e}=x_{i,j}$, $p_{e}=p_{i,j}$ and $\sigma_{e}={\sigma_{i}\sigma_{j}}$.  
Now 
\[
\mathrm{Var}(V_{n,k})=\sum_{w,x}\cov(V_{n,k,w}, V_{n,k,x}).
\]
We now find an upper bound of $\cov(V_{n,k,w}, V_{n,k,x})$. 
\\
At first fix any word $w$ and the set $E_{f} \subset E_{w}$ and consider all the words $x$ such that $E_{{w}}\cap E_{x}= E_{w} \backslash E_{f}$. As every edge in $G_{w}$ and $G_{x}$ appear exactly once,
\begin{equation}
\begin{split}
&\cov(V_{n,k,w}, V_{n,k,x})= \sum_{E_w\backslash E' \subset E_{w} \backslash E_{f}}\left(\frac{1}{n\hat{p}_n(1-\hat{p}_n)}\right)^{k}\prod_{e \in E'}(\pm d^2)\E\prod_{e \in E_{w}\backslash E'}(x_{e}-p_{e})^{2}\\
&=\sum_{E_w\backslash E' \subset E_{w} \backslash E_{f}}\left(\frac{1}{n\hat{p}_n(1-\hat{p}_n)}\right)^{k} 
\pm d^{2\#E'} (1+o(1))\left(\hat{p}_n(1-\hat{p}_n)\right)^{k-\#E'}\\
&\le\sum_{E_w\backslash E' \subset E_{w} \backslash E_{f}}(1+o(1))\left(\frac{1}{n\hat{p}_n(1-\hat{p}_n)}\right)^{k}\left( \frac{c}{2} \right)^{\#E'}\left(\frac{\hat{p}_n}{n}\right)^{\#E'}\hat{p}_n^{k-\#E'}\\
& \le (C)^{k}\frac{1}{n^{k+\#E_{f}}}
\end{split}
\end{equation}
where $C$ is some known constant.
The last inequality holds since $\#E'\ge \#E_{f}$ and  $\#(E_w\backslash E' \subset E_{w} \backslash E_{f})\le 2^{k}$.

Observe that the graph corresponding to the edges $E_{w} \backslash E_{f}$ is a disjoint collection of straight lines. Let the number of such straight lines be $\zeta$. Obviously $\zeta \le \#(E_{w} \backslash E_{f})$.  The number of ways these $\zeta$ components can be placed in $x$ is bounded by $k^{\zeta}\le k^{\#(E_{w} \backslash E_{f})}$ and all other nodes in $x$ can be chosen freely. So there is at most $n^{k-\#V_{E_{w}\backslash E_f}}k^{\#(E_{w} \backslash E_{f})}$ choices of such $x$. Here $V_{E_{w}\backslash E_{f}}$ is the set of vertices of the graph corresponding to $(E_{w} \backslash E_{f})$. Observe that, whenever $\#E_{f}>0$, $E_{w}\backslash E_{f}$ is a forest so $$\#V_{E_{w}\backslash E_{f}}\ge \#(E_{w} \backslash E_{f})+1 \Leftrightarrow k-\#V_{E_{w}\backslash E_f} \le \#E_{f}-1.$$

As a consequence, 
\begin{equation}\label{eqn:booboo}
\sum_{x~|~ E_{{w}}\cap E_{x}= E_{w} \backslash E_{f}}\cov(V_{n,k,w}, V_{n,k,x})\le (C)^{k}\frac{1}{n^{k+\#E_{f}}}n^{E_{f}-1}k^{\#(E_{w} \backslash E_{f})}\le  (C)^{k} \frac{1}{n^{k+1}}k^{k} .
\end{equation}
R.S. of (\ref{eqn:booboo}) doesn't depend on $E_{f}$
and there are at most $2^{k}$ nonempty subsets $E_{f}$ of $E^{w}$.
So
\[
\sum_{x}\cov(V_{n,k,w}, V_{n,k,x}) \le (2C)^{k} k^{k}\frac{1}{n^{k+1}}.
\] 
Finally there are at most $n^{k}$ many $w$. So
\begin{equation}\label{eqn:final}
\sum_{w}\sum_{x}\cov(V_{n,k,w}, V_{n,k,x})\le (2C)^{k}k^{k}\frac{1}{n}.
\end{equation}
Now we use the fact $k=o(\sqrt{\log(n)})$. In this case 
\[
k\log(2C)+k\log(k)\le \sqrt{log(n)}\log(\sqrt{\log n}) = o(log(n)) \Leftrightarrow (2C)^{k}k^{k}=o(n).
\]
This concludes the proof. 
\hfill $\square$
\section{Calculation of second moment and completion of the proofs of Theorems \ref{thm:cont1} and \ref{thm:cont2}}\label{sec:5}
With Propositions \ref{prop:norcont} and \ref{prop:signdistr} in hand the rest of the proof of Theorems \ref{thm:cont1} and \ref{thm:cont2} should be very straight forward.
We at first prove that $\lim_{n \to \infty}\mathbb{E}\left(\frac{d\mathbb{P}_n}{d\mathbb{P}_n'}\right)^2$ is r.s. of (\ref{eqn:limitsecond}) with $t= \frac{c}{2}$ and $\frac{c}{2(1-p)}$ whenever $a_n=o(n)$ and $\frac{a_n}{n} \to p$ respectively.
\begin{lemma}\label{lem:secondmoment}
Let $Y_n:= \frac{d\mathbb{P}_n}{d\mathbb{P}_n'}$. Then the following are true 
\begin{enumerate}[i)]
\item When $p_n \to 0$(i.e. $a_n=o(n)$), $$\E_{\mathbb{P}_n'}[Y_n^2]\to \exp\left\{-\frac{t}{2}-\frac{t^2}{4}\right\}\frac{1}{\sqrt{1-t}},~~ t=\frac{c}{2}<1. $$ 
\item When $p_n \to p \in (0,1)$ 
$$\E_{\mathbb{P}_n'}[Y_n^2]\to \exp\left\{-\frac{t}{2}-\frac{t^2}{4}\right\}\frac{1}{\sqrt{1-t}},  ~~ t= \frac{c}{2(1-p)}<1. $$ 
\end{enumerate}
\end{lemma}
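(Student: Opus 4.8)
The plan is to compute $\E_{\mathbb{P}_n'}[Y_n^2]$ in closed form for finite $n$ by integrating out the random graph, and only afterwards pass to the limit; the two regimes $\hat p_n\to 0$ and $\hat p_n\to p$ will differ only through the value of one scalar limit. First I would condition on the planted labelling. Writing $p^{\sigma}_{ij}=\hat p_n+\sigma_i\sigma_j d$ with $d=\tfrac{p_n-q_n}{2}$, and $L_\sigma(G)=\prod_{i<j}\big(p^\sigma_{ij}/\hat p_n\big)^{x_{ij}}\big((1-p^\sigma_{ij})/(1-\hat p_n)\big)^{1-x_{ij}}$, one has $Y_n=\E_\sigma[L_\sigma]$ for $\sigma$ uniform on $\{\pm1\}^n$, so that
\[
\E_{\mathbb{P}_n'}[Y_n^2]=\E_\sigma\E_\tau\,\E_{\mathbb{P}_n'}\big[L_\sigma L_\tau\big]
\]
with $\sigma,\tau$ independent uniform labellings. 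Under $\mathbb{P}_n'$ the edge indicators are i.i.d.\ $\mathrm{Bern}(\hat p_n)$, so the inner expectation factorizes over pairs, and a one-line computation shows the $(i,j)$ factor equals
\[
\hat p_n\frac{p^\sigma_{ij}p^\tau_{ij}}{\hat p_n^2}+(1-\hat p_n)\frac{(1-p^\sigma_{ij})(1-p^\tau_{ij})}{(1-\hat p_n)^2}=1+\epsilon_n\,\sigma_i\sigma_j\tau_i\tau_j,\qquad \epsilon_n:=\frac{d^2}{\hat p_n(1-\hat p_n)}=\frac{c}{2n(1-\hat p_n)}.
\]
Setting $\rho_i:=\sigma_i\tau_i$, which are i.i.d.\ Rademacher, this yields the exact identity $\E_{\mathbb{P}_n'}[Y_n^2]=\E_\rho\big[\prod_{i<j}(1+\epsilon_n\rho_i\rho_j)\big]$.

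The second step is to linearize the product. For $x\in\{\pm1\}$ one has $1+\epsilon_n x=\sqrt{1-\epsilon_n^2}\,e^{2\beta_n x}$ with $\beta_n:=\tfrac14\log\frac{1+\epsilon_n}{1-\epsilon_n}$, and $\sum_{i<j}\rho_i\rho_j=\tfrac12\big(S^2-n\big)$ with $S:=\sum_{i=1}^n\rho_i$, so
\[
\E_{\mathbb{P}_n'}[Y_n^2]=(1-\epsilon_n^2)^{\binom n2/2}\,e^{-n\beta_n}\,\E_\rho\big[e^{\beta_n S^2}\big].
\]
For the stochastic factor I would apply the Gaussian identity $e^{\beta_n S^2}=\E_W\big[e^{\sqrt{2\beta_n}\,SW}\big]$ with $W\sim N(0,1)$, whence $\E_\rho[e^{\beta_n S^2}]=\E_W\big[(\cosh\sqrt{2\beta_n}\,W)^n\big]=\E_W\big[\exp\{n\log\cosh(\sqrt{2\beta_n}W)\}\big]$. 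Since $\epsilon_n\to 0$ one gets $n\epsilon_n=\frac{c}{2(1-\hat p_n)}\to t$ and hence $n\beta_n\to t/2$, where $t=\tfrac c2$ when $\hat p_n\to0$ and $t=\tfrac{c}{2(1-p)}$ when $\hat p_n\to p$; in both cases $t<1$ by hypothesis. Using $\log\cosh x\le x^2/2$, the integrand is dominated by $\exp\{n\beta_n W^2\}\le\exp\{(t'/2)W^2\}$ for any fixed $t<t'<1$ and all large $n$, which is integrable, so dominated convergence gives $\E_\rho[e^{\beta_n S^2}]\to\E_W[e^{(t/2)W^2}]=(1-t)^{-1/2}$. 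Finally, Taylor expansion of the deterministic prefactors, again using $n\epsilon_n\to t$, gives $(1-\epsilon_n^2)^{\binom n2/2}\to e^{-t^2/4}$ and $e^{-n\beta_n}\to e^{-t/2}$; multiplying the three limits yields $\exp\{-t/2-t^2/4\}(1-t)^{-1/2}$, exactly the claimed value, with $t=\tfrac c2$ in part i) and $t=\tfrac{c}{2(1-p)}$ in part ii).

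The edge-by-edge integration and the Taylor expansions of the prefactors are routine bookkeeping. The one step that needs genuine care is the limit $\E_\rho[e^{\beta_n S^2}]\to(1-t)^{-1/2}$: since $S$ is unbounded and $\beta_n n^2$ diverges, $e^{\beta_n S^2}$ is not bounded and the convergence of this "moment generating function" cannot be read off from the weak convergence of $S/\sqrt n$ alone, so it must be extracted from the domination above. This is precisely where the hypothesis $t<1$ (equivalently $c<2$, resp.\ $c<2(1-p)$) is used; apart from that, the two regimes require no separate treatment, as they enter only through the single limit $\lim n\epsilon_n=t$.
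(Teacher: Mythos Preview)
Your argument is correct and reaches the same destination as the paper's, but the route is genuinely different in its technical handling and is, in a sense, cleaner. The paper also starts by integrating out the graph edge-by-edge to obtain $\E_{\mathbb{P}_n'}[Y_n^2]=\frac{1}{2^{2n}}\sum_{\sigma,\tau}(1+t_n/n)^{S_+}(1-t_n/n)^{S_-}$, which is your $\E_\rho\big[\prod_{i<j}(1+\epsilon_n\rho_i\rho_j)\big]$ in different notation. At this point the paper applies a Taylor expansion $(1+x_n/n)^{n^2}=(1+o(1))\exp\{nx_n-x_n^2/2\}$ uniformly over the summands, reducing the problem to $\frac{1}{2^{2n}}\sum_{\sigma,\tau}\exp\{nt\rho^2/2\}$, and then \emph{cites} Lemma~5.5 of Mossel, Neeman, and Sly to evaluate that sum as $(1-t)^{-1/2}$. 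You instead keep the product exact via the identity $1+\epsilon_n x=\sqrt{1-\epsilon_n^2}\,e^{2\beta_n x}$ for $x\in\{\pm1\}$, reduce to $\E_\rho[e^{\beta_n S^2}]$, and then prove the limit $(1-t)^{-1/2}$ yourself by the Hubbard--Stratonovich trick and dominated convergence. This makes the argument self-contained (no external lemma is invoked), and it isolates precisely where the hypothesis $t<1$ enters, namely the integrability of the Gaussian dominator $e^{(t'/2)W^2}$. The paper's version hides that step inside the cited lemma. In both proofs the two regimes i) and ii) are handled simultaneously, differing only in the value of $\lim_n n\epsilon_n$.
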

\begin{proof}
The proof of Lemma \ref{lem:secondmoment} is similar to the proof of Lemma 5.4. in Mossel et al. \cite{MNS12}.  We only provide a proof of part $ii)$.  The proof of part $i)$ is similar. The notations used in this proof are slightly different from that of Lemma 5.4. in Mossel et al. \cite{MNS12} for understanding part $ii)$ better. 
\\
At first we introduce some notations. Given a labeled graph $(G,\mathbf{\sigma})$ we define 
\begin{equation}\label{defn:W}
W_{uv}= W_{uv}(G,\mathbf{\sigma})=\left\{
\begin{array}{ll}
\frac{p_n}{\hat{p}_n} & \text{if $\sigma_u\sigma_v=1$ and $(u,v)\in E$}\\
\frac{q_n}{\hat{p}_n} & \text{if $\sigma_u\sigma_v=-1$ and $(u,v) \in E$}\\
\frac{1-p_n}{1-\hat{p}_n} & \text{if $\sigma_u\sigma_v=1$ and $(u,v)\notin E$}\\
\frac{1-q_n}{1-\hat{p}_n} & \text{if $\sigma_u\sigma_v=-1$ and $(u,v)\notin E$}
\end{array}
\right.
\end{equation}
and define $V_{uv}$ by the same formula, but with $\sigma$ replaced by $\tau$.
Now 
 \[
 Y_n= \frac{1}{2^n} \sum_{\sigma \in \{1,-1\}^{n}} \prod_{(u,v)} W_{uv}
 \]
 and 
 \[
 Y_n^2= \frac{1}{2^{2n}} \sum_{\sigma,\tau} \prod_{(u,v)} W_{uv}V_{uv}.
 \]
 Since $\{W_{uv}\}$ are independent given $\sigma$, it follows that
 \[
 \E_{\mathbb{P}_n'}(Y_n^2)= \frac{1}{2^{2n}} \sum_{\sigma,\tau}\prod_{(u,v)} \E_{\mathbb{P}_n'}\left(W_{uv}V_{uv}\right).
 \]
 Now we consider the following cases:
 \begin{enumerate}
 \item $\sigma_u\sigma_v=1$ and  $\tau_u\tau_v=1$.
 \item $\sigma_u\sigma_v=-1$ and $\tau_u\tau_v=-1$.
 \item $\sigma_u\sigma_v=1$ and $\tau_u\tau_v=-1$.
 \item $\sigma_u\sigma_v=-1$ and $\tau_u\tau_v=1$.
 \end{enumerate}
Let $t= \frac{c}{2(1-p)}$. We at first calculate $\E_{\mathbb{P}_n'}(W_{uv}V_{uv})$ for cases $1$ and $3$. 
 \\
 \textbf{Case 1:}
 \begin{equation}\label{case1}
 \begin{split}
 \E_{\mathbb{P}_n'}(W_{uv}V_{uv}) &= \left( \frac{p_n}{\hat{p}_n} \right)^2 \hat{p}_n+ \left( \frac{1-p_n}{1-\hat{p}_n} \right)^2(1-\hat{p}_n).\\
 & = \frac{p^2_n}{\hat{p}_n} + \frac{(1-p_n)^2}{1-\hat{p}_n}\\
 &= \frac{(\hat{p}_n+d_n)^2}{\hat{p}_n}+ \frac{(1-\hat{p}_n-d_n)^2}{1-\hat{p}_n}\\
 &= 1+ d_n^2(\frac{1}{\hat{p}_n}+\frac{1}{1-\hat{p}_n})= 1+ \frac{d_n^2}{\hat{p}_n(1-\hat{p}_n)}=1+ \frac{c}{2n(1-\hat{p}_n)}\\
 &= 1+ \frac{t_n}{n}
 \end{split}
 \end{equation}
 where $d_n= \frac{p_n-q_n}{2}$ and $t_n:=\frac{c}{2(1-\hat{p}_n)}=(1+o(1))t$.
 \\
 \textbf{Case 3:}
  \begin{equation}\label{case3}
 \begin{split}
 \E_{\mathbb{P}_n'}(W_{uv}V_{uv}) &= \left( \frac{p_n}{\hat{p}_n}\cdot\frac{q_n}{\hat{p}_n} \right) \hat{p}_n+ \left( \frac{1-p_n}{1-\hat{p}_n}\cdot\frac{1-q_n}{1-\hat{p}_n} \right)(1-\hat{p}_n).\\
 & = \frac{p_nq_n}{\hat{p}_n} + \frac{(1-p_n)(1-q_n)}{1-\hat{p}_n}\\
 &= \frac{(\hat{p}_n+d_n)(\hat{p}_n-d_n)}{\hat{p}_n}+ \frac{(1-\hat{p}_n-d_n)(1-\hat{p}_n+d_n)}{1-\hat{p}_n}\\
 &= 1- d_n^2(\frac{1}{\hat{p}_n}+\frac{1}{1-\hat{p}_n})= 1- \frac{d_n^2}{\hat{p}_n(1-\hat{p}_n)}=1-\frac{t_n}{n}
 \end{split}
 \end{equation}
 It is easy to observe that $ \E_{\mathbb{P}_n'}(W_{uv}V_{uv})= 1+\frac{t_n}{n}$ and $1-\frac{t_n}{n}$ for Case $2$ and Case $4$ respectively.
 
 \noindent
 We now introduce another parameter $\rho=\rho(\sigma,\tau)=\frac{1}{n}\sum_{i}\sigma_i\tau_i.$ Let $S_{\pm}$ be the number of $\{ u,v \}$ such that $\sigma_u\sigma_v\tau_u\tau_v=\pm 1$ respectively. It is easy to observe that 
 \begin{equation}
 \rho^2= \frac{1}{n}+ \frac{2}{n^2}(S_{+}-S_{-})
 \end{equation}
 and 
 \begin{equation}
 1-\frac{1}{n}= \frac{2}{n^2}(S_{+}+S_{-}).
 \end{equation}
 So 
 \begin{equation}
 S_{+}= (1+\rho^2)\frac{n^2}{4}- \frac{n}{2}, ~~ S_{-}= (1-\rho^2)\frac{n^2}{4}.
 \end{equation}
 
 \noindent 
 Now 
 \begin{equation}\label{expy^2}
 \begin{split}
 \E_{\mathbb{P}_n'}(Y_n^2)&= \frac{1}{2^{2n}}\sum_{\sigma,\tau}\left(1+\frac{t_n}{n}\right)^{S_{+}}\left(1-\frac{t_n}{n}\right)^{S_{-}}\\
 &= \frac{1}{2^{2n}} \sum_{\sigma,\tau}\left(1+\frac{t_n}{n}\right)^{(1+\rho^2)\frac{n^2}{4}- \frac{n}{2}}\left(1-\frac{t_n}{n}\right)^{(1-\rho^2)\frac{n^2}{4}}.
 \end{split}
 \end{equation}
 Observe that $t_n=(1+o(1))t$ is a bounded sequence. It is easy to check by taking logarithm and Taylor expansion that for any bounded sequence $x_n$, 
 \[
 \left(1+\frac{x_n}{n}\right)^{n^2}=(1+o(1))\exp\left\{ nx_n-\frac{1}{2}x_n^2\right\}.  
 \]
 So we can write R.S. of (\ref{expy^2}) as 
 \begin{equation}\label{final}
 \begin{split}
 &(1+o(1))\frac{1}{2^{2n}}\sum_{\sigma,\tau}e^{-\frac{t_n}{2}}\exp\left[\left(nt_n-\frac{t_n^2}{2}\right)\left(\frac{1+\rho^2}{4}\right)\right]\times \exp\left[\left(-nt_n-\frac{t_n^2}{2}\right)\left(\frac{1-\rho^2}{4}\right)\right]\\
= &(1+o(1))\frac{1}{2^{2n}}\sum_{\sigma,\tau}e^{-\frac{t_n}{2}-\frac{t_n^2}{4}} \exp\left[ \frac{nt_n\rho^2}{2} \right]\\
=&(1+o(1))e^{-\frac{t_n}{2}-\frac{t_n^2}{4}}\frac{1}{2^{2n}}\sum_{\sigma,\tau} \exp\left[ \frac{(1+o(1))tn\rho^2}{2} \right]
 \end{split}
 \end{equation}
 From Lemma 5.5 in Mossel et al. \cite{MNS12} 
 \[
 \frac{1}{2^{2n}}\sum_{\sigma,\tau}\exp\left[ \frac{(1+o(1))nt\rho^2}{2} \right] \to \frac{1}{\sqrt{1-t}}.
 \]
 So R.S. of (\ref{final}) converges to 
 \[
 \exp\left\{-\frac{t}{2}-\frac{t^2}{4}\right\}\frac{1}{\sqrt{1-t}}
 \]
 as required.
\end{proof}

\noindent
\textbf{Proof of Theorem \ref{thm:cont1} and \ref{thm:cont2}:} 
The proofs of Theorem \ref{thm:cont1} and \ref{thm:cont2} only differ in the value of $t$. For the case $a_n=o(n)$, $t= \frac{c}{2}$ and $t=\frac{c}{2(1-\hat{p})}$ for the other case.  We prove only Theorem \ref{thm:cont1}. Proof of Theorem \ref{thm:cont2} is similar after plugging in the appropriate value of $t$.
\\
\textbf{Proof of part i)} We take $X_{n,i}=C_{n,i}(G)$. 

\noindent
At first observe that when $a_n=o(n)$(i.e. $p_n,q_n\to 0$) for any fixed $i$, $\mu_i:= \left(\sqrt{\frac{c}{2(1-\hat{p}_n)}}\right)^{i}$ converges to $\left(\frac{c}{2}\right)^{\frac{i}{2}}$ as $n \to \infty$. 

From Proposition \ref{prop:signdistr} and Lemma \ref{lem:mom} we see that $C_{n,i}(G)$'s satisfy all the required conditions for Proposition \ref{prop:norcont}. Hence $\mathbb{P}_n$ and $\mathbb{P}_n'$ are mutually contiguous. 

It is easy to see that the average degree $\hat{d}_n:= \frac{1}{n}\sum_{i \neq j}x_{i,j}$ has mean $\frac{a_n+b_n}{2}$ and variance $O(\frac{a_n+b_n}{n})$. So $$\hat{d}_n-\frac{a_n+b_n}{2}=o_p(\sqrt{a_n+b_n})=o_p(a_n-b_n)$$

Suppose under $\mathbb{P}_n$ there exist estimators $A_n$ of $a_n$ and $B_n$ of $b_n$ such that 
\[
|A_n-a_n|+|B_n-b_n|=o_p(a_n-b_n).
\]
Then $2(\hat{d}_n-B_n)-(a_n-b_n)=o_p(a_n-b_n)$ i.e. 
\[
\frac{2(\hat{d}_n-B_n)}{a_n-b_n}|\mathbb{P}_n \stackrel{P}{\to} 1.
\]
However, from the fact that $\mathbb{P}_n$ and $\mathbb{P}_n'$ are contiguous we also have
\[
\frac{2(\hat{d}_n-B_n)}{a_n-b_n}|\mathbb{P}_n' \stackrel{P}{\to} 1
\]
which is impossible. \\
\textbf{Proof of part ii)} It is easy to observe that $\mathbb{P}_n$ and $\mathbb{P}_n'$ are asymptotically singular as for any $k_n \to \infty$, $\frac{\mu_{k_n}}{\sqrt{2k_n}} \to \infty.$ Now we construct estimators for $a_n$ and $b_n$. Let us define 
\[
\hat{f}_{n,k_n}=\left\{  
\begin{array}{ll}
\left(\sqrt{2k_n} C_{n,k_n}(G)\right)^{\frac{1}{k_n}} & \text{if} ~~ C_{n,k_n}(G)>0\\
0& \text{otherwise}.
\end{array}
\right.
\]
It is easy to see that under $\mathbb{P}_n$ $\hat{f}_{n,k_n}\stackrel{P}{\to} \frac{a_n-b_n}{\sqrt{2(a_n+b_n)}}=\sqrt{\frac{c}{2}}$ as $k_n \to \infty$. We have seen earlier that under $\mathbb{P}_n$
\begin{equation}
\begin{split}
&\frac{\hat{d}_n-\frac{(a_n+b_n)}{2}}{\sqrt{a_n+b_n}} \stackrel{P}{\to} 0
\Rightarrow  \frac{\hat{d}_n-\frac{(a_n+b_n)}{2}}{a_n+b_n} \stackrel{P}{\to}0
\Rightarrow  \sqrt{\frac{\hat{d}_n}{\frac{a_n+b_n}{2}}} \stackrel{P}{\to} 1.\\
\Rightarrow & \sqrt{\hat{d}_n}- \sqrt{\frac{a_n+b_n}{2}}= o_p(\sqrt{a_n+b_n})=o_p(a_n-b_n)
\end{split}
\end{equation}
So $\sqrt{\hat{d}_n}\hat{f}_{n,k_n}-\frac{a_n-b_n}{2}=o_p(a_n-b_n)$ under $\mathbb{P}_n$. As a consequence, the estimators 
$\hat{A}= \hat{d}_n+\sqrt{\hat{d}_n}\hat{f}_{n,k_n}$ and $\hat{B}=\hat{d}_n-\sqrt{\hat{d}_n}\hat{f}_{n,k_n}$ have the required property. This concludes the proof. 
\hfill $\square$
\section{Proof of non reconstructability}\label{sec:6}
In this section we provide a proof of the non-reconstruction results stated in Theorem \ref{thm:nonrecon}. Our proof technique relies on fine analysis of some conditional probabilities. Technically, this proof is closely related to the non-reconstruction proof in section 6.2 of Banks et al. \cite{bank16} rather than the original proof given in Mossel et al. \cite{MNS12}. At first we prove one Proposition and one Lemma which will be crucial for our proof. 
\begin{proposition}\label{prop:tv}
 Suppose $a_n,b_n \to \infty$, $\frac{a_n}{n} \to p \in [0,1)$ and $c:=\frac{(a_n-b_n)^2}{(a_n+b_n)}<2(1-p)$. Then for any fixed $r$ and any two configurations $(\sigma_{1}^{(1)},\ldots,\sigma_{r}^{(1)})$, $(\sigma_{1}^{(2)},\ldots,\sigma_{r}^{(2)})$
\[
\mathrm{TV}\left(\mathbb{P}_n(G|(\sigma_{1}^{(1)},\ldots,\sigma_{r}^{(1)})),  \mathbb{P}_n(G|(\sigma_{1}^{(2)},\ldots,\sigma_{r}^{(2)})) \right) = o(1)
\]
Here $\mathrm{TV}(\mu_1,\mu_2)$ is the total variation distance between two probability measures $\mu_1$ and $\mu_2$.
\end{proposition}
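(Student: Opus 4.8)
The plan is to reduce the statement, by the triangle inequality for total variation, to the case where $(\sigma_1^{(1)},\dots,\sigma_r^{(1)})$ and $(\sigma_1^{(2)},\dots,\sigma_r^{(2)})$ differ in exactly one coordinate --- one $\mathrm{TV}$ step being paid for each of the at most $r$ coordinates on which they disagree --- and then to recast the resulting one-coordinate comparison as an overlap (second moment) estimate. Fixing such a pair of configurations, I would compare each conditional law to the unconditional law $\mathbb P_n$ of $G$ through the likelihood ratio
\[
L_{\vec s}(G)\;=\;\frac{d\mathbb P_n(\,\cdot\mid\sigma_{1:r}=\vec s\,)}{d\mathbb P_n}(G)\;=\;2^{r}\,\mathbb P_n(\sigma_{1:r}=\vec s\mid G)\;=\;\sum_{S\subseteq\{1,\dots,r\}}\Big(\prod_{i\in S}s_i\Big)\,\mathbb E\!\left[\prod_{i\in S}\sigma_i\,\Big|\,G\right],
\]
the last equality being the Fourier expansion of the posterior law of $\sigma_{1:r}$ on $\{\pm1\}^{r}$. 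Because $\mathbb P_n$ is invariant under the global sign flip $\sigma\mapsto-\sigma$, the coefficient $\mathbb E[\prod_{i\in S}\sigma_i\mid G]$ vanishes identically for odd $|S|$; hence the $S=\emptyset$ term contributes $1$, the singletons drop out, and
\[
\mathrm{TV}\big(\mathbb P_n(\cdot\mid\vec s),\,\mathbb P_n(\cdot\mid\vec s')\big)\;\le\;\sum_{\substack{2\le|S|\le r\\ |S|\ \mathrm{even}}}\mathbb E_{\mathbb P_n}\big[\,\big|\mathbb E[\textstyle\prod_{i\in S}\sigma_i\mid G]\big|\,\big]\;\le\;\sum_{\substack{2\le|S|\le r\\ |S|\ \mathrm{even}}}\sqrt{\mathbb E_{\mathbb P_n}\!\big[(\mathbb E[\textstyle\prod_{i\in S}\sigma_i\mid G])^{2}\big]}\,.
\]
As $r$ is fixed this is a finite sum, so the proposition follows once $\mathbb E_{\mathbb P_n}\big[(\mathbb E[\prod_{i\in S}\sigma_i\mid G])^{2}\big]\to0$ for each fixed even $S$.

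Next I would linearize this second moment by introducing an independent replica. Letting $\sigma$ be the planted labeling and $\tau$ an independent draw from the posterior given $G$, the pair $\sigma,\tau$ is conditionally independent given $G$ with each coordinate posterior distributed, so $\mathbb E_{\mathbb P_n}\big[(\mathbb E[\prod_{i\in S}\sigma_i\mid G])^{2}\big]=\mathbb E\big[\prod_{i\in S}\sigma_i\tau_i\big]$, and by exchangeability of the vertices this is governed by the overlap $\rho=\tfrac1n\sum_i\sigma_i\tau_i$: for $S=\{1,2\}$ one has the identity $\mathbb E[\sigma_1\sigma_2\tau_1\tau_2]=\tfrac{n}{n-1}\mathbb E[\rho^{2}]-\tfrac1{n-1}$, and for larger $S$ the left side is, up to lower order terms, the dominant diagonal contribution to $\mathbb E[\rho^{|S|}]$. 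Thus the whole proposition reduces to the single estimate $\mathbb E_{\mathbb P_n}[\rho^{2}]\to0$ --- the terms with $|S|\ge4$ being smaller and handled likewise --- which I would establish as a separate lemma.

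The heart of the matter, and the step I expect to be the main obstacle, is this overlap estimate, and here the argument parallels the non-reconstruction proof of Banks et al.~\cite{bank16}, Section~6.2. A direct edge-by-edge coupling of the two conditioned graphs does not suffice: flipping a single fixed label perturbs the edge probabilities of $\Theta(n)$ edges, each by $|p_n-q_n|=2d_n$ with $d_n=\sqrt{c\hat p_n/(2n)}$, so the associated Kullback--Leibler divergences sum to a positive constant of order $c/(2(1-p))$ and the best coupling bound obtainable that way is $O(1)$, not $o(1)$. The improvement to $o(1)$ must instead exploit that $G$ carries only $O\big(1/\sqrt{n\hat p_n}\big)$ of information about each $\sigma_i$ and that below the threshold this information does not aggregate; quantitatively I would extract this from the second moment computation already performed. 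Indeed the proof of Lemma~\ref{lem:secondmoment} contains the limit $2^{-2n}\sum_{\sigma,\tau}\exp\{\tfrac{nt\rho^{2}}{2}(1+o(1))\}\to(1-t)^{-1/2}$ for $t=c/2$ (respectively $t=c/(2(1-p))$) and $t<1$, which says that under the \emph{uniform} pairing the overlap already concentrates at $0$ on the scale $\rho^{2}=O(1/n)$. Transferring this concentration from the uniform pairing to the planted (posterior) overlap is exactly where the real work lies: it requires combining that bound with the mutual contiguity of $\mathbb P_n$ and $\mathbb P_n'$ from Theorems~\ref{thm:cont1} and \ref{thm:cont2}, equivalently a careful analysis of the partition function $\sum_\sigma\mathbb P(G\mid\sigma)$ in the denominator of the posterior, so as to conclude that a bounded second moment forces $\mathbb E_{\mathbb P_n}[\rho^{2}]=o(1)$. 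Granting this, the two reductions above complete the proof.
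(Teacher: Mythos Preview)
Your route differs from the paper's and leaves a real gap at precisely the step you yourself flag as ``where the real work lies.'' The paper bounds the total variation by Cauchy--Schwarz against the \emph{Erd\"os--R\'enyi} reference $\mathbb P_n'$, not $\mathbb P_n$: it controls
\[
\Bigl(\sum_G\bigl(\mathbb P_n(G\mid\sigma^{(1)})-\mathbb P_n(G\mid\sigma^{(2)})\bigr)^2\big/\mathbb P_n'(G)\Bigr)^{1/2},
\]
expands the square into four cross terms $\sum_G\mathbb P_n(G\mid\sigma^{(i)})\mathbb P_n(G\mid\sigma^{(j)})/\mathbb P_n'(G)$, and after averaging over the unconstrained labels on $\{r+1,\dots,n\}$ each term is exactly the second-moment sum of Lemma~\ref{lem:secondmoment} up to a harmless $(1\pm t_n/n)^{O(r^2)}=1+o(1)$ prefactor from the $r$ fixed coordinates. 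All four terms therefore share the common limit $(1-t)^{-1/2}e^{-t/2-t^2/4}$, and their alternating sum is $o(1)$. Because $\mathbb P_n'$ is a product measure, the partition function never enters.

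Your argument instead takes $\mathbb P_n$ as reference, which is why the partition function appears and why you cannot finish. You reduce everything to $\mathbb E_{\mathbb P_n}[\rho^2]\to0$ for the planted--posterior overlap and then only \emph{assert} that this follows from contiguity plus the bounded second moment. That implication is not automatic: the second-moment computation constrains the overlap of a \emph{uniform} pair $(\sigma,\tau)$, whereas you need it for the planted--posterior pair, and contiguity transfers statements about $G$ alone, not about $(\sigma,\tau)$. The gap can be closed --- write $\mathbb E[\sigma_1\sigma_2\mid G]=Y_n^{(1,2)}/Y_n$ with $Y_n^{(1,2)}:=2^{-n}\sum_\sigma\sigma_1\sigma_2\,\mathbb P_n(G\mid\sigma)/\mathbb P_n'(G)$, show $\mathbb E_{\mathbb P_n'}\bigl[(Y_n^{(1,2)})^2\bigr]=O(1/n)$ by the same calculation as Lemma~\ref{lem:secondmoment}, transfer to $\mathbb P_n$ via $\mathbb P_n(A)\le\sqrt{\mathbb E_{\mathbb P_n'}[Y_n^2]\,\mathbb P_n'(A)}$, and use $\mathbb E_{\mathbb P_n}[Y_n^{-1}]=1$ for tightness of the denominator --- but once you compute $\mathbb E_{\mathbb P_n'}\bigl[(Y_n^{(1,2)})^2\bigr]$ you have essentially reproduced the paper's cross-term computation in signed form. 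The replica/posterior detour is thus more circuitous than the paper's direct $\chi^2$ bound; the economy of the paper's argument comes exactly from choosing $\mathbb P_n'$ rather than $\mathbb P_n$ as the dominating measure.

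A minor point: your opening triangle-inequality reduction to single-coordinate flips is never actually used --- the Fourier bound you write down already handles arbitrary $\vec s,\vec s\,'$.
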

\begin{proof}
We know that
\begin{equation}\label{eqn:tv}
\begin{split}
&\mathrm{TV}\left(\mathbb{P}_n(G| \sigma_{u}^{(1)}~~ u \in [r]), \mathbb{P}_{n}(G| \sigma_{u}^{(2)}~~ u \in [r])\right)\\
& = \sum_{G} \left|(\mathbb{P}_n(G| \sigma_{u}^{(1)}~~ u \in [r])- \mathbb{P}_{n}(G| \sigma_{u}^{(2)}~~ u \in [r])\right|\\
& = \sum_{G}  \left|(\mathbb{P}_n(G| \sigma_{u}^{(1)}~~ u \in [r])- \mathbb{P}_{n}(G| \sigma_{u}^{(2)}~~ u \in [r])\right|\frac{\sqrt{\mathbb{P}_n'(G)}}{\sqrt{\mathbb{P}_n'(G)}}\\
&\le \left( \sum_{G} \mathbb{P}_n'(G) \right)^{\frac{1}{2}}\left(  \sum_{G}\frac{\left(\mathbb{P}_n(G| \sigma_{u}^{(1)}~~ u \in [r])- \mathbb{P}_{n}(G| \sigma_{u}^{(2)}~~ u \in [r]\right)^2}{\mathbb{P}_n'(G)}\right)^{\frac{1}{2}}\\
&= \left(  \sum_{G}\frac{\left(\sum_{\tilde{\sigma}}\mathbb{P}_n(\tilde{\sigma})\left(\mathbb{P}_n(G| \sigma^{(1)},\tilde{\sigma})- \mathbb{P}_{n}(G| \sigma^{(2)},\tilde{\sigma}\right)\right)^2}{\mathbb{P}_n'(G)}\right)^{\frac{1}{2}}.
\end{split}
\end{equation}
Here $\sigma^{(1)}:= \left\{(\sigma_{1}^{(1)},\ldots,\sigma_{r}^{(1)} \right\}$, $\sigma^{(2)}:= \left\{ (\sigma_{1}^{(2)},\ldots,\sigma_{r}^{(2)}) \right\}$ and $\tilde{\sigma}$ is any configuration on $\{r+1,\ldots,n \}.$ 

Now observe that
\begin{equation}
\begin{split}
&\left(\sum_{\tilde{\sigma}}\mathbb{P}_n(\tilde{\sigma})\left(\mathbb{P}_n(G| \sigma^{(1)},\tilde{\sigma})- \mathbb{P}_{n}(G| \sigma^{(2)},\tilde{\sigma}\right)\right)^2 \\
&= \sum_{\tilde{\sigma},\tilde{\tau}}\mathbb{P}_n(\tilde{\sigma}) \mathbb{P}_{n}(\tilde{\tau})\left(\mathbb{P}_n(G| \sigma^{(1)},\tilde{\sigma})\mathbb{P}_n(G| \sigma^{(1)},\tilde{\tau})+ \mathbb{P}_n(G| \sigma^{(2)},\tilde{\sigma})\mathbb{P}_n(G| \sigma^{(2)},\tilde{\tau})\right.\\
&\left. -  \mathbb{P}_n(G| \sigma^{(1)},\tilde{\sigma})\mathbb{P}_n(G| \sigma^{(2)},\tilde{\tau})- \mathbb{P}_n(G| \sigma^{(2)},\tilde{\sigma})\mathbb{P}_n(G| \sigma^{(1)},\tilde{\tau}) \right).
\end{split}
\end{equation}
We shall prove that the value of 
\begin{equation}\label{eqn:same}
\sum_{G}\sum_{\tilde{\sigma},\tilde{\tau}} \mathbb{P}_n(\tilde{\sigma})\mathbb{P}_n(\tilde{\tau})\frac{ \mathbb{P}_n(G| \sigma^{(1)},\tilde{\sigma})\mathbb{P}_n(G| \sigma^{(2)},\tilde{\tau})}{\mathbb{P}_n'(G)}
\end{equation}
doesn't depend on $\sigma^{(1)}$ and $\sigma^{(2)}$ upto $o(1)$ terms. This will prove that the final expression in (\ref{eqn:tv}) goes to $0$. As a consequence, the proof of Proposition \ref{prop:tv} will be complete. 

At first we recall the definition of $W_{uv}(G,\sigma)$ from (\ref{defn:W}). It is easy to observe that 
\begin{equation}\label{eqn:radonexp}
\begin{split}
 &\sum_{G}\sum_{\tilde{\sigma},\tilde{\tau}}\frac{\mathbb{P}_n(\tilde{\sigma})\mathbb{P}_n(\tilde{\tau})\left( \mathbb{P}_n(G| \sigma^{(1)},\tilde{\sigma})\mathbb{P}_n(G| \sigma^{(2)},\tilde{\tau}) \right)}{\mathbb{P}_n'(G)}\\
 &= \sum_{\tilde{\sigma},\tilde{\tau}}\frac{1}{2^{2(n-r)}}\sum_{G}\left(\prod_{uv}W(G,\sigma^{(1)},\tilde{\sigma}) W(G,\sigma^{(2)},\tilde{\tau})\right)\mathbb{P}_n'(G)\\
 & = \frac{1}{2^{2(n-r)}}\sum_{\tilde{\sigma},\tilde{\tau}} \prod_{u,v}\E_{\mathbb{P}_n'}(W(G,\sigma^{(1)},\tilde{\sigma}) W(G,\sigma^{(2)},\tilde{\tau})).
\end{split}
\end{equation}
Observe that the sum in the final expression of (\ref{eqn:radonexp}) is taken over $(\tilde{\sigma},\tilde{\tau})$ so the configurations in $\sigma^{(1)}$ and $\sigma^{(2)}$ remain unchanged. 

Now let us introduce the following parameters
\begin{equation}
\begin{split}
\rho^{\mathrm{fix}} &:= \frac{1}{r} \sum_{i=1}^{r} \sigma^{(1)}_i \sigma^{(2)}_i\\
S_{\pm}^{\mathrm{fix}} &:= \sum_{u,v \in [r]} I_{\{\sigma^{(1)}_{u}\sigma^{(1)}_{v}\sigma^{(2)}_{u}\sigma^{(2)}_v= \pm 1\}}
\end{split}
\end{equation}
where $I_{A}$ denotes the indicator variable corresponding to set $A$. We similarly define 
\begin{equation}
\begin{split}
\rho(\tilde{\sigma},\tilde{\tau}) &:= \frac{1}{n-r} \sum_{i=r+1}^{n} \tilde{\sigma}_{i} \tilde{\tau}_i\\
S_{\pm}(\tilde{\sigma},\tilde{\tau}) &:= \sum_{u,v \in [r]} I_{\{\tilde{\sigma}_{u}\tilde{\sigma}_{v}\tilde{\tau}_{u}\tilde{\tau}_v= \pm 1\}}.
\end{split}
\end{equation}

By using arguments similar to the proof of Lemma \ref{lem:secondmoment} one can show that the R.S. of the final expression of (\ref{eqn:radonexp}) further simplifies to 
\begin{equation}
\begin{split}
&=\left( 1+ \frac{t_n}{n}\right)^{S_{+}^{\mathrm{fix}}}\left(1-\frac{t_n}{n}\right)^{S_{-}^{\mathrm{fix}}} \frac{1}{2^{2(n-r)}}\sum_{\tilde{\sigma},\tilde{\tau}} \left( 1+ \frac{t_n}{n}\right)^{S_{+}(\tilde{\sigma},\tilde{\tau})}\left(1-\frac{t_n}{n}\right)^{S_{-}(\tilde{\sigma},\tilde{\tau})}\\
&=\left( 1+ \frac{t_n}{n}\right)^{S_{+}^{\mathrm{fix}}}\left(1-\frac{t_n}{n}\right)^{S_{-}^{\mathrm{fix}}} \frac{1}{2^{2(n-r)}}\sum_{\tilde{\sigma},\tilde{\tau}} \left( 1+ \frac{t_n}{n}\right)^{\left(1+\rho(\tilde{\sigma},\tilde{\tau})^2\right)\frac{(n-r)^2}{4}-\frac{n-r}{2}} \left( 1- \frac{t_n}{n}\right)^{\left( 1-\rho(\tilde{\sigma},\tilde{\tau})^2 \right)\frac{(n-r)^2}{4}}.
\end{split}
\end{equation}

Now $S_{+}^{\mathrm{fix}}$ and $S_{-}^{\mathrm{fix}}$ are both bounded by $r^2$ also $t_n=(1+o(1))t$. So 
\[
\left( 1+ \frac{t_n}{n}\right)^{S_{+}^{\mathrm{fix}}}\left(1-\frac{t_n}{n}\right)^{S_{-}^{\mathrm{fix}}}=(1+o(1)).
\]
On the other hand one can repeat the arguments in the proof of Lemma \ref{lem:secondmoment} to conclude that 
\[
\sum_{\tilde{\sigma},\tilde{\tau}} \left( 1+ \frac{t_n}{n}\right)^{\left(1+\rho(\tilde{\sigma},\tilde{\tau})^2\right)\frac{(n-r)^2}{4}-\frac{n-r}{2}} \left( 1- \frac{t_n}{n}\right)^{\left( 1-\rho(\tilde{\sigma},\tilde{\tau})^2 \right)\frac{(n-r)^2}{4}} \to \frac{1}{\sqrt{1-t}} \exp \left\{ -\frac{t}{2}-\frac{t^2}{4} \right\}.
\]
As a result 
\[
\sum_{G}\sum_{\tilde{\sigma},\tilde{\tau}} \mathbb{P}_n(\tilde{\sigma})\mathbb{P}_n(\tilde{\tau})\frac{ \mathbb{P}_n(G| \sigma^{(1)},\tilde{\sigma})\mathbb{P}_n(G| \sigma^{(2)},\tilde{\tau})}{\mathbb{P}_n'(G)}= (1+o(1))\frac{1}{\sqrt{1-t}} \exp \left\{ -\frac{t}{2}-\frac{t^2}{4} \right\}
\]
irrespective of the value of $\sigma^{(1)}$ and $\sigma^{(2)}$. So the final expression in (\ref{eqn:tv}) goes to $0$. Hence the proof is complete.
\end{proof}
We now prove the following easy consequence of Proposition \ref{prop:tv} which  states that the posterior distribution of a single label is essentially unchanged
if we know a bounded number of other labels.
\begin{lemma}\label{lem:posterior}
Suppose $S$ is a set of finite cardinality $r$, $u \notin S$ be a fixed node and $\pi$ gives probability $\frac{1}{2}$ to both $\pm 1$. Then under the conditions of Proposition \ref{prop:tv}
\[
\E\left[ \mathrm{TV}(\mathbb{P}_n(\sigma_u|G,\sigma_{S}),\pi)|\sigma_{S} \right]=o(1).
\]   
\end{lemma}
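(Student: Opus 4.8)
The plan is to reduce the claim to Proposition \ref{prop:tv} through an explicit Bayes computation. Write $\mu_{\pm}(G) := \mathbb{P}_n(G \mid \sigma_u = \pm 1, \sigma_S)$. Since the labels are i.i.d.\ uniform, the prior of $\sigma_u$ given $\sigma_S$ is exactly $\pi$, so
\[
\mathbb{P}_n(\sigma_u = +1 \mid G, \sigma_S) = \frac{\mu_+(G)}{\mu_+(G) + \mu_-(G)}, \qquad \mathbb{P}_n(G \mid \sigma_S) = \tfrac12\big(\mu_+(G) + \mu_-(G)\big),
\]
and consequently, for each $G$ with $\mathbb{P}_n(G \mid \sigma_S) > 0$,
\[
\mathrm{TV}\big(\mathbb{P}_n(\sigma_u \mid G, \sigma_S), \pi\big) = 2\left|\mathbb{P}_n(\sigma_u = +1 \mid G, \sigma_S) - \tfrac12\right| = \frac{|\mu_+(G) - \mu_-(G)|}{\mu_+(G) + \mu_-(G)}.
\]

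Next I would take the conditional expectation over $G$ given $\sigma_S$. The weight $\mathbb{P}_n(G\mid\sigma_S) = \tfrac12(\mu_+(G)+\mu_-(G))$ cancels the denominator above, and the graphs with $\mu_+(G)+\mu_-(G)=0$ contribute nothing, leaving
\[
\E\big[\mathrm{TV}(\mathbb{P}_n(\sigma_u \mid G, \sigma_S), \pi) \mid \sigma_S\big] = \tfrac12 \sum_G |\mu_+(G) - \mu_-(G)| = \tfrac12\,\mathrm{TV}\big(\mathbb{P}_n(G \mid \sigma_u = +1, \sigma_S),\ \mathbb{P}_n(G \mid \sigma_u = -1, \sigma_S)\big),
\]
where the last step is just the definition of total variation used in the paper (the $L^1$ distance between the two laws on graphs).

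Finally I would apply Proposition \ref{prop:tv} with $r$ replaced by $r+1$, to the two configurations $(\sigma_S, +1)$ and $(\sigma_S, -1)$ on the node set $S \cup \{u\}$; this gives that the right-hand side is $o(1)$. Crucially, the proof of Proposition \ref{prop:tv} shows that its $o(1)$ bound does not depend on the pair of fixed configurations (the only configuration-dependent factor appearing there, $(1+t_n/n)^{S_{+}^{\mathrm{fix}}}(1-t_n/n)^{S_{-}^{\mathrm{fix}}}$, equals $1+o(1)$ because $S_{\pm}^{\mathrm{fix}} \le (r+1)^2$), so the estimate is uniform over the value of $\sigma_S$, and the lemma follows. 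There is no genuinely hard step here; the only points requiring care are this uniformity in $\sigma_S$ and the harmless exclusion of graphs of zero conditional probability.
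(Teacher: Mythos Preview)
Your argument is correct and follows essentially the same route as the paper: both reduce, via a Bayes computation and the identity $\mathbb{P}_n(G\mid\sigma_S)=\tfrac12(\mu_+(G)+\mu_-(G))$, to $\tfrac12\,\mathrm{TV}\big(\mathbb{P}_n(G\mid\sigma_S,\sigma_u=+1),\,\mathbb{P}_n(G\mid\sigma_S,\sigma_u=-1)\big)$ and then invoke Proposition~\ref{prop:tv} on the enlarged set $S\cup\{u\}$. Your write-up is in fact a bit cleaner than the paper's chain of equalities, and your explicit remark on uniformity in $\sigma_S$ is a useful addition.
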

\begin{proof}
Observe that $\mathbb{P}_n(\sigma_u=i)= \pi(i)$ from the model assumption. So 
\begin{equation}\label{eqn:posterior}
\begin{split}
\E\left[ \mathrm{TV}(\mathbb{P}_n(\sigma_u|G,\sigma_{S}),\pi)|\sigma_{S} \right] &= \sum_{G}\sum_{i=\pm 1}\left|\mathbb{P}_n\left(\sigma_u=i|G,\sigma_S\right)-\mathbb{P}_n (\sigma_u=i) \right|\mathbb{P}_n(G|\sigma_S)\\
&= \sum_{i= \pm 1}\mathbb{P}_n(\sigma_u=i) \sum_{G}\left|\frac{\mathbb{P}_n\left(\sigma_u=i|G,\sigma_S\right)}{\mathbb{P}_n(\sigma_u=i)}-1  \right|\mathbb{P}_n(G|\sigma_S)\\
&= \sum_{i=\pm 1}\mathbb{P}_n(\sigma_u=i)\sum_{G} \left|\frac{\mathbb{P}_n\left(\sigma_u=i\cap G \cap \sigma_S \right) \mathbb{P}_n(\sigma_S) }{\mathbb{P}_n(\sigma_u \cap \sigma_S)\mathbb{P}_n(G\cap \sigma_S)}-1  \right|\mathbb{P}_n(G|\sigma_S)\\
&= \sum_{i=\pm 1}\mathbb{P}_n(\sigma_u=i)\sum_{G} \left|\frac{\mathbb{P}_n(G|\sigma_S,\sigma_u=i)}{\mathbb{P}_n(G|\sigma_S)} -1 \right|\mathbb{P}_n(G|\sigma_S)\\
\end{split}
\end{equation}
Observe that 
\[
\mathbb{P}_n(G|\sigma_S)= \frac{1}{2}\left(\mathbb{P}_n(G|\sigma_S,\sigma_u=1)+ \mathbb{P}_n(G|\sigma_S,\sigma_u=-1)\right).
\]
As a consequence, the final expression of the R.S. of (\ref{eqn:posterior}) becomes
\[
\frac{1}{2}\sum_{i=\pm 1}\mathbb{P}_n(\sigma_u=i)\mathrm{TV}\left( \mathbb{P}_n(G|\sigma_S,\sigma_u=i),\mathbb{P}_n(G|\sigma_S,\sigma_u=-i) \right).
\]
So the proof is complete by applying Proposition \ref{prop:tv}. 
\end{proof}

\noindent
With Proposition \ref{prop:tv} and Lemma \ref{lem:posterior} in hand, we now give a proof of Theorem \ref{thm:nonrecon}.

\noindent
\textbf{Proof of Theorem \ref{thm:nonrecon}:}
We only prove part $i)$ of Theorem \ref{thm:nonrecon}. The proof of part $ii)$ is similar.

\noindent
Let $\hat{\sigma}$ be any estimate of the labeling of the nodes, $\sigma$ be the true labeling and $f:\{1,2\}\to \{\pm 1\}$ be the function such that $f(1)=1$ and $f(2)=-1$.

\noindent
It is elementary to check that 
\begin{equation}
\begin{split}
\frac{1}{2}\mathrm{ov}(\sigma,\hat{\sigma})= \frac{1}{n}\left[N_{11}+N_{22}-\frac{1}{n}(N_{1\cdot}N_{\cdot 1}) -\frac{1	}{n}(N_{2\cdot}N_{\cdot 2})\right].
\end{split}
\end{equation}
Here 
\begin{equation}
\begin{split}
N_{ij}&= \left|\sigma^{-1}\{ f(i)\} \cap \hat{\sigma}^{-1}\{ f(j) \} \right|\\
N_{i\cdot}&= \left|\sigma^{-1}\{ f(i)\}  \right|\\
N_{\cdot j} &= \left| \hat{\sigma}^{-1}\{ f(j) \} \right|. 
\end{split}
\end{equation} 
So it is sufficient to prove that 
\[
\frac{1}{n^2} \E_{\mathbb{P}_n}\left[ N_{ii}- \frac{1}{n} N_{i\cdot} N_{\cdot i} \right]^2= \frac{1}{n^2} \E_{\mathbb{P}_n}\left[ N_{ii}^2- \frac{2}{n}N_{ii} N_{i\cdot} N_{\cdot i} + \frac{1}{n^2} N_{i\cdot}^2 N_{\cdot i}^2  \right] \to 0 ~~ i\in \{ 1,2 \}.
\]
Now 
\begin{equation}\label{expectN_ii}
\begin{split}
\E_{\mathbb{P}_n}\left[ N_{ii}^2\right]&= \E_{\mathbb{P}_n} \left[ \sum_{u, v} I_{\{\sigma_{u}= f(i)\}}I_{\{\sigma_{v}= f(i)\}}I_{\{\hat{\sigma}_{u}= f(i)\}} I_{\{\hat{\sigma}_{v}= f(i)\}} \right]\\
&= \E_{\mathbb{P}_n}\left[ \E \left[ \sum_{u, v} I_{\{\sigma_{u}= f(i)\}}I_{\{\sigma_{v}= f(i)\}}I_{\{\hat{\sigma}_{u}= f(i)\}} I_{\{\hat{\sigma}_{v}= f(i)\}} \right]\left|G\right.\right]\\
&= \E_{\mathbb{P}_n}\left[ \E \left[ \sum_{u, v} I_{\{\sigma_{u}= f(i)\}}I_{\{\sigma_{v}= f(i)\}}\right]I_{\{\hat{\sigma}_{u}= f(i)\}} I_{\{\hat{\sigma}_{v}= f(i)\}}\left|G\right.\right]
\end{split}
\end{equation}
The last step follows from the fact that $\hat{\sigma}$ is a function of $G$. Now 
\begin{equation*}
\begin{split}
\E\left[ I_{\{\sigma_{u}= f(i)\}}I_{\{\sigma_{v}= f(i)\}}|G\right]&= \E\left[ I_{\{\sigma_{u}= f(i)\}}|G,\sigma_{v}= f(i)\right]\mathbb{P}_{n}\left( \sigma_{v}=f(i)|G \right)\\
&= (\pi(f(i))+o(1)) \mathbb{P}_{n}(G|\sigma_{v}=f(i))\frac{\mathbb{P}_n(\sigma_v=f(i))}{\mathbb{P}_n(G)}\\
&= (\pi^{2}(f(i))+o(1))\frac{\mathbb{P}_{n}(G|\sigma_{v}=f(i))}{\mathbb{P}_n(G)}
\end{split}
\end{equation*}
Here the second step follows from Lemma \ref{lem:posterior}.
As a consequence,
\begin{equation}
\begin{split}
&\left|\E_{\mathbb{P}_n}\left[ \E  \sum_{u, v} \left(I_{\{\sigma_{u}= f(i)\}}I_{\{\sigma_{v}= f(i)\}}-\pi^{2}(f(i))\right)I_{\{\hat{\sigma}_{u}= f(i)\}} I_{\{\hat{\sigma}_{v}= f(i)\}}\left|G\right.\right]\right|\\
&\le \E_{\mathbb{P}_n}\left[ \sum_{u,v} \left| \E\left[\left(I_{\{\sigma_{u}= f(i)\}}I_{\{\sigma_{v}= f(i)\}}-\pi^{2}(f(i))\right)I_{\{\hat{\sigma}_{u}= f(i)\}} I_{\{\hat{\sigma}_{v}= f(i)\}}\left|G\right. \right]\right| \right]\\
& = \E_{\mathbb{P}_n} \left[ \sum_{u,v} \left| \pi^{2}(f(i))I_{\{\hat{\sigma}_{u}= f(i)\}} I_{\{\hat{\sigma}_{v}= f(i)\}}\left( \frac{\mathbb{P}_{n}(G|\sigma_{v}=f(i))}{\mathbb{P}_n(G)} -1 \right) +o(1) \right| \right]\\
& \le \sum_{u,v}\sum_{G}\left|\mathbb{P}_{n}(G|\sigma_{v}=f(i))-\mathbb{P}_n(G)\right|+o(n^2)\\
&= o(n^2).
\end{split}
\end{equation}
Here the last step follows from Proposition \ref{prop:tv}.
\\
So we have 
\begin{equation}\label{e1}
\E_{\mathbb{P}_n}\left[ N_{ii}^2\right]= \sum_{u,v}\E_{\mathbb{P}_n} \left[ \pi^2(f(i))I_{\{\hat{\sigma}_{u}= f(i)\}} I_{\{\hat{\sigma}_{v}= f(i)\}} \right] +o(n^2)
\end{equation}
Similar calculations will prove that
\begin{equation}\label{e2}
\E_{\mathbb{P}_n}\left[ N_{ii} N_{i\cdot} N_{\cdot i}\right]= n \sum_{u,v}\E_{\mathbb{P}_n} \left[ \pi^2(f(i))I_{\{\hat{\sigma}_{u}= f(i)\}} I_{\{\hat{\sigma}_{v}= f(i)\}} \right] +o(n^3)
\end{equation}
and 
\begin{equation}\label{e3}
\E_{\mathbb{P}_n}\left[N_{i \cdot}^2 N_{\cdot i}^2\right]= n^2\sum_{u,v}\E_{\mathbb{P}_n} \left[ \pi^2(f(i))I_{\{\hat{\sigma}_{u}= f(i)\}} I_{\{\hat{\sigma}_{v}= f(i)\}} \right] +o(n^4).
\end{equation}
Plugging in these estimates we have 
\[
\frac{1}{n^2} \E_{\mathbb{P}_n}\left[ N_{ii}- \frac{1}{n} N_{i\cdot} N_{\cdot i} \right]^2=o(1).
\] 
This completes the proof.
\hfill{$\square$}
\bibliography{CON_SBM}
\section{Appendix}
\subsection{More general words and their equivalence classes}
Here we only give a very brief description about the combinatorial aspects of random matrix theory required to prove Lemma \ref{lem:appendix}. For more general information one should look at Chapter 1 of Anderson et al. \cite{AGZ} and Anderson and Zeiouni \cite{AZ05}. The definitions in this section have been taken from  Anderson et al. \cite{AGZ} and Anderson and Zeitouni \citep{AZ05}.   
\begin{definition}($\mathcal{S}$ words)
Given a set $\mathcal{S}$, an $\mathcal{S}$ letter $s$ is simply an element of
$\mathcal{S}$. An $\mathcal{S}$ word $w$ is a finite sequence of letters $s_1
\ldots s_n$, at least one letter long.
An $\mathcal{S}$ word $w$ is closed if its first and last letters are the same. Two $\mathcal{S}$ words
$w_1,w_2$ are called equivalent, denoted $w_1\sim w_2$, if there is a bijection on $\mathcal{S}$ that
maps one into the other.
\end{definition}
When $\mathcal{S} = \{1, \ldots ,N\}$ for some finite $N$, we use the term $N$ word. Otherwise, if
the set $\mathcal{S}$ is clear from the context, we refer to an $\mathcal{S}$ word simply as a word.

For any word $w = s_1 \ldots s_k$, we use $l(w) = k$ to denote the length of $w$, define
the weight $wt(w)$ as the number of distinct elements of the set ${s_1,\ldots , s_k
}$ and the
support of $w$, denoted by $\mathrm{supp}(w)$, as the set of letters appearing in $w$. With any word
$w$ we may associate an undirected graph, with $wt(w)$ vertices and $l(w) −- 1$ edges,
as follows.
\begin{definition} (Graph associated with a word) 
Given a word $w = s_1 \ldots s_k$,
we let $G_w = (V_w,E_w)$ be the graph with set of vertices $V_w = \mathrm{supp}(w)$ and (undirected)
edges $E_w = \{\{s_i, s_i+1
\}, i = 1,\ldots ,k - 1
\}.$
\end{definition}
The graph $G_w$ is connected since the word $w$ defines a path connecting all the
vertices of $G_w$, which further starts and terminates at the same vertex if the word
is closed. For $e \in E_w$, we use $N^w_
e$ to denote the number of times this path traverses
the edge $e$ (in any direction). We note that equivalent words generate the same
graphs $G_w$ (up to graph isomorphism) and the same passage-counts $N^w_
e$.
\begin{definition}(sentences and corresponding graphs)
A sentence $a=[w_i]_{i=1}^{n}=[[\alpha_{i,j}]_{j=1}^{l(w_i)}]_{i=1}^{n}$ is an ordered collection of $n$ words of length $(l(w_1),\ldots,l(w_n))$ respectively. We define the graph $G_a=(V_a,E_a)$ to be the graph with 
\[
V_a= \mathrm{supp}(a), E_a= \left\{ \{ \alpha_{i,j},\alpha_{i,j+1}\}| i=1,\ldots,n ; j=1,\ldots, l(w_i)-1 \}  \right\}. 
\] 
\end{definition} 
\begin{definition}(weak CLT sentences)
A sentence $a=[w_i]_{i=1}^{n}$ is called a weak CLT sentence. If the following conditions are true:
\begin{enumerate}
\item All the words $w_i$'s  are closed.
\item Jointly the words $w_i$ visit edge of $G_a$ at least twice.
\item For each $i\in \{1,\ldots,n \}$, there is another $j\neq i \in \{ 1,\ldots,n\}$ such that $G_{w_i}$ and $G_{w_j}$ have at least one edge in common. 
\end{enumerate}
\end{definition}
Note that these definitions are consistent with the ones given in Section \ref{section:4}. However, in Section \ref{section:4}, we defined these only for some specific cases required to solve the problem.

In order to prove Lemma \ref{lem:appendix}, we require the following result from Anderson et al. \cite{AGZ}.
\begin{lemma}(Lemma 2.1.23 in Anderson et al. \cite{AGZ})\label{lem:bounded}
Let $\mathcal{W}_{k,t}$ denote the equivalence classes corresponding to all closed words $w$ of length $k+1$ with $\mathrm{wt}(w)=t$ such that each edge in $G_w$ have been traversed at least twice. Then for $k>2t-2$,
\[
\# \mathcal{W}_{k,t} \le 2^k k^{3(k-2t+2)}
\] 
\end{lemma}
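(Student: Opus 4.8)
This is the combinatorial estimate underlying the moment method for Wigner matrices (it is Lemma 2.1.23 of \cite{AGZ}), and I would reproduce its proof by a walk-exploration/encoding argument in the spirit of F{\"u}redi--Koml{\'o}s. First reduce to counting \emph{canonical} closed words: in each equivalence class pick the representative $w=(a_1,\dots,a_{k+1})$ with $a_1=a_{k+1}$ whose letters are $\{1,\dots,t\}$, labelled so that their first occurrences appear in increasing order. Then $\#\mathcal W_{k,t}$ equals the number of canonical closed words of length $k+1$ and weight $t$ in which every edge of $G_w$ is traversed at least twice. Such $G_w$ is connected with $t$ vertices and some number $m$ of edges, so $t-1\le m\le\lfloor k/2\rfloor$; set $p:=k-2t+2$, which splits as $p=(k-2m)+2(m-t+1)$ with both summands non-negative (the excess of total traversals over $2m$, and the cyclomatic number of $G_w$).

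Next, run the exploration step by step, where step $i$ moves from $a_i$ to $a_{i+1}$ along $e_i=\{a_i,a_{i+1}\}$, and classify each of the $k$ steps: a \emph{tree step}, where $a_{i+1}$ is a brand-new letter (exactly $t-1$ of these, and in the canonical word the value $a_{i+1}$ is then forced); a \emph{surplus-edge step}, where $e_i$ has not occurred before but $a_{i+1}$ has (exactly $m-t+1$ of these, one per independent cycle); or a \emph{repeat step}, where $e_i$ has been used before. When $w$ is of Wigner type, i.e. $m=t-1$ and every edge is used exactly twice (the boundary case $p=0$), the tree steps and the repeat steps form a Dyck path of length $k$, each repeat step being the forced backtrack along the currently open edge, so the class is determined by that lattice path and $\#\mathcal W_{k,t}\le C_{t-1}\le 2^k$; this is the source of the $2^k$ factor. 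For general $p$ I would encode a canonical word by: (i) a pattern in $\{+1,-1\}^k$ recording which steps are tree steps ($+1$) and which are descent/repeat steps ($-1$), costing $2^k$; (ii) the positions of the surplus-edge steps and of the \emph{irregular} repeat steps (those at which the walk neither backtracks along the open edge nor closes a cycle in the forced way), costing at most $\binom{k}{O(p)}\le k^{O(p)}$; and (iii) for each exceptional step, the identity of the already-visited vertex it moves to, one of at most $t\le k$ choices, together with a bounded amount of bookkeeping (e.g. which earlier traversal of that edge is being matched), again $k^{O(p)}$. The exponents can be arranged so that (i)--(iii) together cost at most $2^k k^{3p}=2^k k^{3(k-2t+2)}$.

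\textbf{Main obstacle.} The crux is the charging lemma behind step (ii)--(iii): proving that deviations from the Wigner skeleton are few, i.e. the number of surplus-edge steps plus irregular repeat steps is bounded by a constant times $p=k-2t+2$, and that recording their locations together with one vertex choice (and a bounded tag) per deviation really suffices to reconstruct the canonical word. This needs a careful invariant for the exploration — tracking the open path (the DFS stack / the odd-multiplicity edges) and showing every surplus edge and every extra traversal of an edge is paid for against the slack $p$ — and one must also verify that, once the $\pm1$-pattern and the exceptional data are fixed, all non-exceptional repeat steps are genuinely forced, so no information is lost. Everything else — the reduction to canonical words, the exact counts of tree/surplus/repeat steps, and the Catalan bound in the Wigner case — is routine.
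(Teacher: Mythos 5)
A point of context first: the paper does not prove this lemma at all. It is imported verbatim as Lemma 2.1.23 of Anderson, Guionnet and Zeitouni \cite{AGZ} and used as a black box in the proof of Lemma \ref{lem:appendix}, so there is no in-paper argument to compare against; the relevant benchmark is the proof in \cite{AGZ}, which is indeed the F\"uredi--Koml\'os-type exploration/encoding you outline. Your setup is sound: the reduction to canonical representatives (letters labelled by order of first occurrence), the classification of the $k$ steps into tree steps ($t-1$ of them), surplus-edge steps ($|E_w|-t+1$ of them) and repeat steps, the identity $k-2t+2=(k-2|E_w|)+2(|E_w|-t+1)$ with both summands nonnegative, and the Dyck-path/Catalan count in the case of excess zero are all correct and are exactly how the argument in \cite{AGZ} begins.

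However, as a proof your proposal has a genuine gap, and it sits precisely where the content of the lemma lies. Everything hinges on the ``charging lemma'' you defer to the ``main obstacle'' paragraph: (a) that the number of surplus-edge steps plus irregular repeat steps is bounded by a constant multiple of $p=k-2t+2$; (b) that each such deviation can be recorded with at most polynomially-in-$k$ data (a position, a target vertex among the previously visited ones, and a tag identifying which earlier traversal is being matched) in such a way that the total cost is $k^{3p}$ and not, say, $k^{Cp}$ for an unspecified $C$; and (c) that, once the $\pm1$ pattern and this exceptional data are fixed, every non-exceptional repeat step is forced, so the map from codes to canonical words is onto $\mathcal W_{k,t}$. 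None of this is proved in your write-up, and it is not routine: the delicate point is that edges traversed three or more times create ambiguity at \emph{later} visits to their endpoints, so one must show that each extra traversal and each surplus edge consumes at least one unit of the slack $p$, and that three coordinates of size at most $k$ per unit of slack suffice for reconstruction --- this bookkeeping is exactly the source of the exponent $3(k-2t+2)$ in the statement. The phrase ``the exponents can be arranged'' asserts the conclusion rather than deriving it. Since the paper itself simply cites \cite{AGZ}, the economical fix is to do the same; if you want a self-contained proof you must write out the charging and reconstruction argument in full, as is done in the proof of Lemma 2.1.23 in \cite{AGZ}.
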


\noindent
Assuming Lemma \ref{lem:bounded} we now prove Lemma \ref{lem:appendix}.\\
\textbf{Proof of Lemma \ref{lem:appendix}:}
Let $a=[w_{i}]_{i=1}^{m}$ be a  weak CLT sentence such that $G_a$ have $\mathcal{C}(a)$ many connected components. At first we introduce a partition $\eta(a)$ in the following way. We put $i$ and $j$ in same block of $\eta(a)$ if $G_{w_i}$ and $G_{w_j}$ share an edge. At first we fix such a partition $\eta$ and consider all the sentences such that $\eta(a)=\eta$.  Let $\mathcal{C}(\eta)$ be the number of blocks in $\eta$. It is easy to observe that for any $a$ with $\eta(a)=\eta$, we have $\mathcal{C}(\eta)=\mathcal{C}(a)$. From now on we denote $\mathcal{C}(\eta)$ by $\mathcal{C}$ for convenience.
\\
 Let $a$ be any weak CLT sentence such that $\eta(a)=\eta$. We now propose an algorithm to embed $a$ into $\mathcal{C}$ ordered closed words $(W_1,\ldots,W_{\mathcal{C}})$ such that the equivalence class of each $W_i$ belongs to $\mathcal{W}_{L_i,t_i}$ for some numbers $L_i$ and $t_i$.
 
A similar type of argument can be found in Claim 3 of the proof of Theorem 2.2 in Banerjee and Bose(2016) \cite{BanB16}.
\\
\textbf{An embedding algorithm:} 
Let $B_1,\ldots,B_\mathcal{C}$ be the blocks of the partition $\eta$ ordered in the following way. Let $m_i= \min\{j: j \in B_i\}$ and we order the blocks $B_i$ such that $m_1<m_2\ldots < m_{\mathcal{C}}$. Given a partition $\eta$ this ordering is unique.  Let 
\[
B_i=\{ i(1)<i(2)<\ldots<i(l(B_i)) \}.
\] 
Here $l(B_i)$ denotes the number of elements in $B_i$. 

For each $B_i$ we embed the sentence $a_i=[w_{i(j)}]_{1\le j \le l(B_i) }$ into $W_i$ sequentially in the following manner.
\begin{enumerate}
\item Let $S_1=\{i(1)\}$ and $\mathfrak{w}_1=w_{i(1)}.$
\item For each $1\le c \le l(B_i)-1$ we perform the following.
\begin{itemize}
\item Consider $\mathfrak{w}_c=(\alpha_{1,c},\ldots,\alpha_{l(\mathfrak{w}_c),c})$ and $S_c \subset B_{i}$. Let $ne \in B_{i} \backslash S_c$ be the index such that the following  two conditions hold.
\begin{enumerate}
\item $G_{\mathfrak{w}_c}$ and $G_{w_{ne}}$ shares at least one edge $e=\{\alpha_{\kappa_1,c},\alpha_{\kappa_1+1,c}\}$.
\item $\kappa_1$ is minimum among all such choices.
\end{enumerate}
\item Let $w_{ne}=(\beta_{1,c},\ldots, \beta_{l(w_{ne}),c})$ and $\{\beta_{\kappa_2,c},\beta_{\kappa_2+1,c}\}$ be the first time $e$ appears in $w_{ne}$. As $\{\beta_{\kappa_2,c},\beta_{\kappa_2+1,c}\}=\{\alpha_{\kappa_1,c},\alpha_{\kappa_1+1,c}\}$, $\alpha_{\kappa_1,c}$ is either equal to $\beta_{\kappa_2,c}$ or $\beta_{\kappa_2,c}$. Let $\kappa_3 \in \{ \kappa_2,\kappa_2+1 \}$ such that $\alpha_{\kappa_1,c}=\beta_{\kappa_3,c}$. If $\beta_{\kappa_2,c}=\beta_{\kappa_2+1,c}$, then we simply take $\kappa_3=\kappa_2$.
\item We now generate $\mathfrak{w}_{c+1}$ in the following way 
$$\mathfrak{w}_{c+1}=(\alpha_{1,c},\ldots,\alpha_{\kappa_1,c},\beta_{\kappa_3+1,c},\ldots, \beta_{l(w_{ne}),c},\beta_{2,c},\ldots, \beta_{\kappa_3,c},\alpha_{\kappa_1+1,c},\ldots, \alpha_{l(\mathfrak{w}_{c}),c} ).$$

Let $\tilde{a}_c:=(\mathfrak{w}_c,w_{ne})$. It is easy to observe by induction that all $\mathfrak{w}_{c}$'s are closed words and so are all the $w_{ne}$'s. So the all the edges  in the graph $G_{\tilde{a}_c}$ are preserved along with their passage counts in $G_{\mathfrak{w}_{c+1}}$.
\item Generate $S_{c+1}=S_{c}\cup \{ ne \}.$
\end{itemize}
\item Return $W_{i}=\mathfrak{w}_{l(B_i)}$.
\end{enumerate}
In the preceding algorithm we have actually defined a function $f$ which maps any weak CLT sentence $a$ into $\mathcal{C}$ ordered closed words $(W_1,\ldots,W_{\mathcal{C}})$ such that each the equivalence class of each $W_i$ belongs to $\mathcal{W}_{L_i,t_i}$ for some numbers $L_i$ and $t_i$. Observe also that $L_i< \sum_{j \in B_i} l(w_j)$ and $t_i < \frac{L_i+1}{2}$.
\vspace{10pt}

\noindent
Unfortunately $f$ is not an injective map. So given $(W_1,\ldots,W_{\mathcal{C}})$ we find an upper bound to the cardinality of the following set 
\[
f^{-1}(W_1,\ldots,W_{\mathcal{C}}):=\{a| f(a)=(W_1,\ldots,W_{\mathcal{C}})\}  
\]
We have argued earlier $\mathcal{C}$ is the number of blocks in $\eta$.
However, in general $(W_1,\ldots,W_{\mathcal{C}})$ does neither specify the partition $\eta$ nor the order in which the words are concatenated with in each block $B_i$ of $\eta$. So we fix a partition $\eta$ with $\mathcal{C}$ many blocks and an order of concatenation $\mathcal{O}$. Observe that 
\[
\mathcal{O}= (\sigma_1(\eta),\ldots,\sigma_{\mathcal{C}}(\eta))
\]
where for each $i$, $\sigma_{i}(\eta)$ is a permutation of the elements in $B_i$. Now we give an uniform upper bound to the cardinality of the following set 
\[
f^{-1}_{\eta,\mathcal{O}}(W_1,\ldots,W_{\mathcal{C}}):=\left\{a| \eta(a)=\eta ~~; \mathcal{O}(a)= \mathcal{O} ~~ \& f(a)=(W_1,\ldots,W_{\mathcal{C}}) \right\}.
\]
According to the algorithm any word $W_i$ is formed by recursively applying step 2. to $(\mathfrak{w}_{c},w_{ne})$ for $1 \le c \le l(B_i)$. 
Given a word $\mathfrak{w}_3=(\alpha_1,\ldots,\alpha_{l(\mathfrak{w}_3)})$, we want to find out the number of two words sentences $(\mathfrak{w}_1,\mathfrak{w}_2)$ such that applying step 2 of the algorithm on $(\mathfrak{w}_1,\mathfrak{w}_2)$ gives $\mathfrak{w}_3$ as an output. This is equivalent to choose three positions $i_1<i_2<i_3$ from the set $\{1,\ldots,l(\mathfrak{w}_3)\}$ such that $\alpha_{i_1}=\alpha_{i_3}$. Once these three positions are chosen, $(\mathfrak{w}_1,\mathfrak{w}_2)$ can be constructed uniquely in the following manner 
\begin{equation*}
\begin{split}
\mathfrak{w}_1&=(\alpha_1,\ldots, \alpha_{i_1},\alpha_{i_3+1},\ldots,\alpha_{l(\mathfrak{w}_3)})\\
\mathfrak{w}_2&=(\alpha_{i_2},\ldots,\alpha_{i_3},\alpha_{i_1+1},\ldots,\alpha_{i_2}).
\end{split}
\end{equation*}

Total number of choices $i_1<i_2<i_3$ is bounded by $l(\mathfrak{w}_3)^{3}\le \left(\sum_{i=1}^{m}{l(w_i})\right)^{3}.$ For each block $B_i$, step 2. of the algorithm has been used $l(B_i)$ many times. So 
\[
f^{-1}_{\eta,\mathcal{O}}(W_1,\ldots,W_{\mathcal{C}}) \le \left(\sum_{i=1}^{m}{l(w_i})\right)^{3\sum_{i=1}^{\mathcal{C}}l(B_i)}= \left(\sum_{i=1}^{m}{l(w_i})\right)^{3m}
\]
On the other hand, a there at most $m^{m}$ many $\eta$'s and for each $\eta$ there are at most $\prod_{i=1}^{\mathcal{C}}l(B_i)!\le m^{m}$ choices of $\mathcal{O}$. So
\begin{equation}\label{bdd1}
f^{-1}(W_1,\ldots,W_{\mathcal{C}})\le m^{2m}\left(\sum_{i=1}^{m}{l(w_i})\right)^{3m}\le \left(D_1\sum_{i=1}^{m}{l(w_i})\right)^{D_2m}
\end{equation}
for some known constants $D_1$ and $D_2$. Now we fix the sequence $(L_i,t_i)$ and find an upper bound to the number of $(W_1,\ldots, W_{\mathcal{C}})$. From Lemma \ref{lem:bounded} we know the number of choices of $W_i$ is bounded by $2^{L_i-1}(L_i-1)^{L_i-2t+1}n^{t_i}$. So the total number of choices for $(W_1,\ldots, W_{\mathcal{C}})$ is bounded by 
\begin{equation}\label{bdd2}
2^{\sum_{i=1}^{m}l(w_i)}\prod_{i=1}^{\mathcal{C}}(L_i-1)^{3(L_i-2t+1)}n^{t_i}\le 2^{\sum_{i=1}^{m}l(w_i)}n^{t}\left(\sum_{i=1}^{m}l(w_i)\right)^{3(\sum_{i=1}^{m}l(w_i)-2t)}\left(\sum_{i=1}^{m}l(w_i)\right)^{m}.
\end{equation}
Now the number of choices $(L_i,t_i)$ such that $\sum_{i=1}^{\mathcal{C}}L_{i}= \sum_{i=1}^{m}l(w_i)$ and $\sum_{i=1}^{\mathcal{C}}t_i= t$ are bounded by 
\begin{equation}\label{bdd3}
\binom{\sum_{i=1}^{m}l(w_i)-1}{\mathcal{C}-1} \binom{t-1}{\mathcal{C}-1}\le  \left(\sum_{i=1}^{m} l(w_i)\right)^{2m}.
\end{equation}
Here the inequality follows since $\mathcal{C}\le m$ and $t\le \sum_{i=1}^{m}\frac{l(w_i)}{2}-1$. Finally we using the fact that $1\le \mathcal{C}\le m$ and combining (\ref{bdd1}), (\ref{bdd2}) and (\ref{bdd3}) we finally have
\begin{equation}
\begin{split}
&\# \mathcal{A} \le \left(D_1\sum_{i=1}^{m}{l(w_i})\right)^{D_2m} \times  2^{\sum_{i=1}^{m}l(w_i)}n^{t}\left(\sum_{i=1}^{m}l(w_i)\right)^{3(\sum_{i=1}^{m}l(w_i)-2t)}\left(\sum_{i=1}^{m}l(w_i)\right)^{m} \times m\left(\sum_{i=1}^{\mathcal{C}} l(w_i)\right)^{2m}\\
\Rightarrow & \#\mathcal{A} \le 2^{\sum_{i} l(w_i)}\left(C_1\sum_{i}l(w_i)\right)^{C_2m}\left(\sum_{i}l(w_i)\right)^{3(\sum_{i}l(w_i)-2t)}n^{t} 
\end{split}
\end{equation}
as required. \hfill $\square$
\\
\textbf{Acknowledgments} The author thanks Elchanan Mossel and Zongming Ma for many useful discussions and their careful reading of the draft. He is grateful to Joe Neeman for useful discussions about non-reconstruction, Jian Ding for pointing out a small mistake in an earlier version of the draft and Cris Moore for pointing out an interesting reference. Finally he thanks Adam Smith and Audra McMillan for their interest in this work and several useful discussions. 
\end{document}